\newtheorem{teo}{Theorem}[section]
\newtheorem{corol}{Corrollary}[section]
\newtheorem{prop}{Proposition}[section]
\newtheorem{lema}{Lemma}[section]
\newtheorem{defi}{Definition}[section]
\newtheorem{obs}{Remark}[section]
\newtheorem{ex}{Example}[section]
\begin{document}
\title{A Maximum Principle for hypersurfaces in $\displaystyle \mathbb{R}^{n+1}$ with an Ideal Contact at infinity and Bounded Mean Curvature }
\author{J. Deibsom da Silva}
\author{A. F. de Sousa}

\address{Departamento de Matemática-Universidade Federal Rural de Pernambuco, 52171-900 - Recife/PE, Brazil.}
\email{jose.dsilva@ufrpe.br}
\address{Departamento de Matemática-Universidade Federal de Pernambuco, 50670-901, Recife/PE, Brazil.}
\email{tsousa@dmat.ufpe.br}

	\curraddr{}
	\urladdr{} 
    \dedicatory{}                                 
    \date{\today}                                       
    \thanks{}          
                               
    \translator{}                                 
   
    \keywords{Maximum Principles, Ideal Contact, Parabolic Manifold, Convex Side}
    \subjclass[2010]{53A10, 53A07}

\maketitle
\begin{abstract}

We will generalize a Maximum Principle at Infinity in the parabolic case given  by De Lima [Ann. Global Anal. Geom. {\bf 20}, 325-343 2001] and De Lima and Meeks [Indiana Univ. Math. Journal {\bf 53} 5, 1211–1223 2004], for disjoints hypersurfaces  of $\mathbb{R}^{n+1}$ with bounded mean curvature without restrictions on the Gaussian Curvature. We will also extend for hypersurfaces in $\mathbb{R}^{n+1}$ a generalization of Hopf's Maximum Principle for hypersurfaces that get close asymptotically.

\end{abstract}

\section{Introduction}


A classical result in Differential Geometry is the Hopf's Maximum Principle for Hypersurfaces in $\mathbb{R}^{n+1}$, which states that under certain conditions related to the Mean Curvature, if two hypersurfaces $M_1$ and $M_2$ are tangent at an interior point $p\in M_1\cap M_2$ and this point is an Ideal Contact at $p$ (see Definition \ref{contato ideal em p}), then they coincide in a neighbourhood of $p$ (Theorem \ref{hopf}).


Thinking about this type of contact between two hypersurfaces, De Lima, \cite{RFL,R-M} , and Meeks, \cite{R-M}, established an ideal contact between two disjoints surfaces $M_{1}$ and $M_{2}$ in $\mathbb{R}^{3}$, which generalizes the \emph{Ideal Contact at $p$} for disjoints surfaces that get asymptotically close to each other. This approximation was name {\it Ideal Contact at infinity} (see Definition \ref{contato}).


Assuming an \emph{Ideal Contact at Infinity} between the surfaces $M_{1}$ and $M_{2}$ in $\mathbb{R}^{3}$, De Lima demonstrates the following {\it Maximum Principle at Infinity} for surfaces with bounded Gaussian curvature and constant Mean Curvature $H\neq 0$, \cite{RFL}.



\begin{teo}\label{H-superfices em R3}
Let $M_1$ and $M_2$ be two disjoints, complete and properly embedded $H$-surfaces in $\mathbb{R}^3$, with bounded Gaussian Curvature and non-empty boundaries $\partial M_{1}$ and $\partial M_{2}$. If $M_{1}$ and $M_{2}$ have an ideal contact at infinity and either $M_{1}$ or $M_{2}$ is parabolic, then
$$\min\{dist(M_{1},\partial M_{2}),dist(M_{2},\partial M_{1})\}=0.$$

\end{teo}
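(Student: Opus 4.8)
The plan is to argue by contradiction, assuming $d_0 := \min\{dist(M_1,\partial M_2),\, dist(M_2,\partial M_1)\} > 0$ and deriving from this a violation of the parabolicity hypothesis. Without loss of generality suppose $M_1$ is the parabolic surface. Since $dist(M_1,\partial M_2) > 0$, for every $x \in M_1$ a nearest point of $M_2$ is an interior point of $M_2$, so the function $f\colon M_1 \to \mathbb{R}$, $f(x) = dist(x,M_2)$, is well defined, strictly positive (the surfaces are disjoint), and away from its cut locus it is smooth, with the minimizing geodesic segment from $x$ meeting both surfaces orthogonally.

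First I would record the analytic consequences of the Ideal Contact at infinity (Definition \ref{contato}) together with the bounded Gaussian curvature. The curvature bound yields uniform normal-graph representations of one surface over the other on geodesic balls of a fixed radius, and it forces $f$ to be bounded. The Ideal Contact at infinity then provides a sequence $x_k \in M_1$ escaping to infinity along which $f(x_k) \to 0$ while the unit normals of $M_1$ and $M_2$ at the corresponding nearest points become parallel. In particular $\inf_{M_1} f = 0$, and this infimum is approached only in the limit at infinity, never on the compact-in-$\partial M_1$ part where $f \ge dist(M_2,\partial M_1) = d_0 > 0$.

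The central step is the computation of $\Delta_{M_1} f$ at a smooth point. Writing $M_1$ locally as a normal graph of height $f$ over $M_2$ and subtracting the two mean-curvature equations --- both equal to the same constant $H$ --- produces a linear second-order uniformly elliptic equation for $f$ with no zeroth-order term, whose coefficients are controlled by the bounded second fundamental forms. From this I expect to extract a differential inequality of the form $\Delta_{M_1} f \le 0$ (after, if necessary, replacing $f$ by a suitable monotone function of it to absorb the first-order terms), so that $f$ is superharmonic on $M_1$.

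With superharmonicity in hand the contradiction is immediate: since $M_1$ is parabolic, every bounded-below superharmonic function attains its infimum on the boundary, so $\inf_{M_1} f = \inf_{\partial M_1} f \ge d_0 > 0$, contradicting $\inf_{M_1} f = 0$ from the second step. Hence $d_0 = 0$, which is the claim. The main obstacle is the central step: pinning down the sign of $\Delta_{M_1} f$ rigorously while handling the non-smoothness of the distance function on its cut locus. I would treat the latter in the standard way, verifying the differential inequality in the barrier sense by touching $f$ from above with smooth functions, and using the uniform graph estimates from the curvature bound to guarantee that such barriers exist on balls of a definite size.
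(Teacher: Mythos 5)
Your architecture is the right one, and in fact it is the same as the paper's: a distance function living on the parabolic surface, a differential inequality for it with an error quadratic in the gradient, an exponential substitution $\phi=e^{-C_0 d}$ to make the function subharmonic (your ``monotone function of $f$ to absorb the first-order terms''), and parabolicity with boundary to force the supremum to the boundary, contradicting the ideal contact at infinity. But the step you label central is exactly where the proposal has a genuine gap: you only ``expect to extract'' the inequality $\Delta^{M_1} f \le 0$, and the sign there is the entire geometric content of the theorem. Subtracting the two constant-mean-curvature equations for normal graphs gives a homogeneous linear equation only when both surfaces solve the \emph{same} equation over the same base with consistent orientation, and whether the resulting inequality for $f$ has a favorable sign is governed by the side from which one surface approaches the other. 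The only hypothesis that fixes this side is the condition $x_i-y_i=\lambda_i\mathbf{H}_{M_1}(y_i)$ in Definition \ref{contato}, which you never invoke. The paper's Lemmas \ref{lema2.1} and \ref{lema2.2} make this precise: the nearest-point condition $x-y=\lambda\mathbf{H}_{M_1}(y)$ places $x$ on the side where the parallel hypersurfaces $M_{1d}$ have principal curvatures $\lambda_i/(1-\lambda_i d)\geq\lambda_i$, and it is this monotonicity, combined with $\sup_{M_2}\vert\mathbf{H}_{M_2}\vert\leq\inf_{M_1}\Uplambda_n$ and Lemma \ref{lema1.1}, that yields $\Delta^{M_2}d\leq C_0\vert\nabla^{M_2}d\vert^2$. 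Note also that the error term there is genuinely quadratic in the gradient (it comes from $\upsigma_{ij}=\nabla^{M_2}_{e_i}d\,\nabla^{M_2}_{e_j}d/(1-\vert\nabla^{M_2}d\vert^2)$), which is why $e^{-C_0 d}$ absorbs it; a generic first-order term $b^i\partial_i f$ with merely bounded $b^i$, which is all your linearization argument provides, cannot be absorbed by any substitution of the form $\phi=\psi\circ f$.

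The second gap is the domain on which you apply parabolicity. The differential inequality can only be established where $f$ is small: the normal-graph representation, the positivity of $1-\vert\nabla f\vert^2$ in the denominators, and the regularity of the parallel hypersurfaces (which requires $d<1/\max_i\vert\lambda_i\vert$, and this is where bounded curvature enters) all fail at points of $M_1$ far from $M_2$, where the Laplacian of the distance function has no sign in general. So your conclusion $\inf_{M_1}f=\inf_{\partial M_1}f$, which uses superharmonicity on \emph{all} of $M_1$, is unjustified and in general false. The paper avoids this by localizing: it fixes $\epsilon<m_0$, passes to a connected component $C_2(\epsilon)$ of the set of points with $dist(x,M_1)\leq\epsilon$ satisfying the nearest-point condition $S_x\neq\emptyset$, observes that $\epsilon<m_0$ forces $\partial C_2(\epsilon)\subset\{d=\epsilon\}$, and---crucially---uses Proposition \ref{prop1} to transfer parabolicity from the surface to this component before comparing $\sup_{C_2(\epsilon)}\phi=1$ with $\sup_{\partial C_2(\epsilon)}\phi=e^{-C_0\epsilon}$. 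Your sketch can be repaired along exactly these lines (and, as a minor point, your ``without loss of generality $M_1$ is parabolic'' silently symmetrizes the contact relation, which is directional in Definition \ref{contato}), but as written both the sign of the key inequality and its region of validity are missing, and these are precisely the two things the paper's lemmas and localization are there to supply.
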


In 2004, De Lima, \cite{RFL,R-M}, along with Meeks managed to prove in \cite{R-M} the following non-parabolic version with bounded Gaussian and mean curvatures of Theorem \ref{H-superfices em R3}.



\begin{teo}\label{Meeks-Lima}
Let $M_1$ be a surface with boundary $\partial M_1$ and bounded Gaussian curvature, which is properly embedded in $\mathbb{R}^3$ and whose mean curvature satisfies $b_{0}\leq H_{M_1}\leq b_{1}$, $b_{0},b_{1}>0$. Assume $M_2$ is a surface with boundary $\partial M_2$, which is properly immersed in $\mathbb{R}^3$ and such that $\vert H_{M_2}\vert\leq b_0$. Then, if $M_2$ has an contact ideal at infinity wich $M_1$, one has
$$\min\{dist(M_1,\partial M_2),dist(M_2,\partial M_1)\}=0.$$
\end{teo}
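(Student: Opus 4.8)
The plan is to proceed by contradiction, assuming that the minimum of the two boundary distances is some positive number $\delta > 0$. Under an ideal contact at infinity there should exist sequences of points $p_k \in M_1$ and $q_k \in M_2$ with $|p_k - q_k| \to 0$ while $p_k, q_k$ escape to infinity and stay a definite distance from the boundaries; the positivity of the minimal boundary distance is exactly what guarantees that, near these points, both surfaces locally look like graphs over a fixed-size disk in the common tangent plane, with uniform geometric control. First I would exploit the two-sided mean curvature pinching on $M_1$, namely $b_0 \le H_{M_1} \le b_1$, together with $|H_{M_2}| \le b_0$, to arrange the local geometry so that $M_1$ sits (locally) on the mean-convex side while $M_2$ is controlled from the other side; this is the configuration in which a comparison/Hopf-type argument can be run.

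The key technical device is a local graph representation combined with the parabolicity or the bounded-geometry hypothesis. Because $M_1$ has bounded Gaussian curvature, its second fundamental form is bounded, so there is a uniform radius $r_0$ on which $M_1$ is a graph $u_k$ over the tangent disk at $p_k$ with $C^2$-bounds independent of $k$. Using the bound on $H_{M_2}$ and properness, I would similarly represent $M_2$ locally as a graph $v_k$ over the same disk (after a small rotation), and then compare $u_k$ and $v_k$. The two graphs satisfy quasilinear mean-curvature equations, and the inequality $H_{M_2} \le b_0 \le H_{M_1}$ translates into a differential inequality of the form
$$
\mathcal{L}(u_k - v_k) \le 0 \quad \text{on the disk of radius } r_0,
$$
where $\mathcal{L}$ is a uniformly elliptic second-order operator (its coefficients being controlled by the uniform $C^1$-bounds on the graphs). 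This is precisely the setting of the classical Hopf Maximum Principle invoked in Theorem \ref{hopf}.

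The heart of the argument, and the step I expect to be the main obstacle, is converting the asymptotic approach $|p_k - q_k| \to 0$ into a genuine interior tangency that forces the surfaces to coincide, thereby contradicting the assumption that they are disjoint. The natural route is to pass to a limit: translate the configuration so that $p_k$ returns to the origin, and extract via Arzelà--Ascoli convergent subsequences $u_k \to u_\infty$ and $v_k \to v_\infty$ in $C^2$ on the fixed disk. The uniform curvature and mean-curvature bounds are exactly what make this compactness available without any additional Gaussian-curvature hypothesis on $M_2$. In the limit one obtains two graphs that touch at an interior point (the image of the origin) with $u_\infty \ge v_\infty$ and $\mathcal{L}(u_\infty - v_\infty) \le 0$; the strong maximum principle then yields $u_\infty \equiv v_\infty$ near that point, i.e. an interior ideal contact in the sense of Definition \ref{contato ideal em p}, so by Theorem \ref{hopf} the limit surfaces coincide on the disk.

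The subtle part is ensuring this limiting coincidence contradicts disjointness of the \emph{original} surfaces rather than merely of their limits; this is where the parabolicity/non-parabolicity dichotomy and properness must be used carefully. I would argue that coincidence of the limits together with the strict separation $\delta>0$ of the boundaries propagates back to force $M_1$ and $M_2$ to intersect for large $k$, contradicting disjointness --- or, alternatively, run a Harnack-type estimate on $w_k = u_k - v_k \ge 0$ to show that $w_k$ cannot both stay positive and tend to zero at an interior point while satisfying $\mathcal{L} w_k \le 0$ with uniformly elliptic $\mathcal{L}$, again via the interior Harnack inequality. Either way the contradiction forces $\delta = 0$, which is the assertion $\min\{\mathrm{dist}(M_1,\partial M_2),\mathrm{dist}(M_2,\partial M_1)\}=0$.
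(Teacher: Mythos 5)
Your proposal has two genuine gaps, and the first is fatal as written. The local graph representation and the $C^2$ Arzel\`a--Ascoli compactness are only available for $M_1$: the bounded-Gaussian-curvature hypothesis (which, together with $b_0\le H_{M_1}\le b_1$, bounds the second fundamental form) applies to $M_1$ alone, while for $M_2$ the theorem assumes nothing beyond properness and $\vert H_{M_2}\vert\le b_0$. A mean curvature bound does not bound the second fundamental form, so there is no uniform radius $r_0$ on which $M_2$ is a graph $v_k$ with uniform $C^1$ bounds --- think of a properly immersed minimal surface ($H=0\le b_0$) with catenoidal necks of radii shrinking to $0$ near the points $q_k$: no fixed-size graphical representation exists, the operator $\mathcal{L}$ has no uniform coefficient control, and the extraction $v_k\to v_\infty$ in $C^2$ is unavailable. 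Your own sentence claiming compactness ``without any additional Gaussian-curvature hypothesis on $M_2$'' is precisely where the argument breaks: that compactness is a curvature statement, not a mean-curvature statement.

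Second, even granting compactness, the contradiction step fails because it is purely local. Coincidence of the limits $u_\infty\equiv v_\infty$ does not contradict disjointness of the original surfaces, and the Harnack fallback does not help either: a positive supersolution $w_k=u_k-v_k$ of $\mathcal{L}w_k\le 0$ can tend to zero uniformly on the whole disk, which is exactly what two disjoint surfaces approaching asymptotically do (a planar end and a catenoid end satisfy all your local inequalities without touching). The obstruction in this theorem is global. Note also that the paper does not prove Theorem \ref{Meeks-Lima} at all --- it is quoted from \cite{R-M}; the paper's own proof of its parabolic generalization, Theorem \ref{principio maximo}, sidesteps both of your gaps by never representing $M_2$ graphically. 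There one restricts $d(x)=dist(x,M_1)$ to $M_2$; Lemmas \ref{lema2.1} and \ref{lema2.2} use only the principal curvatures of the parallel hypersurfaces $M_{1d}$ (i.e., the geometry of $M_1$) plus the comparison $\sup_{M_2}\vert\mathbf{H}_{M_2}\vert\le\inf_{M_1}H_{M_1}$ to obtain $\Delta^{M_2}d\le C_0\vert\nabla^{M_2}d\vert^2$ where $d$ is small, so no control on the second fundamental form of $M_2$ is ever needed; then $\phi=e^{-C_0d}$ is subharmonic on a component $C_2(\epsilon)$ of the near-contact region, and the global hypothesis (parabolicity of $M_2$ in the paper; in the non-parabolic setting of \cite{R-M}, a different global argument exploiting the two-sided pinching $b_0\le H_{M_1}\le b_1$) forces $\sup_{C_2(\epsilon)}\phi=\sup_{\partial C_2(\epsilon)}\phi=e^{-C_0\epsilon}<1$, contradicting $\sup_{C_2(\epsilon)}\phi=1$, which follows from the ideal contact at infinity. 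Any repair of your approach would require curvature estimates for $M_2$ derived from its mean curvature bound alone, and such estimates are false in general.
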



As an application of Theorem \ref{Meeks-Lima} De Lima and Meeks also proved the following theorem, which generalizes Hopf's Maximum Principle for surfaces in $\mathbb{R}^{3}$ with an ideal contact at infinity and bounded Gaussian and mean curvatures:


\begin{teo}\label{convexo em R3}

Suppose $M_{1}$ is a properly embedded surface in $\mathbb{R}^3$ without boundary and of bounded Gaussian Curvature. If the mean curvature function of $M_{1}$ satifies $b_{0}\leq H_{M_1}\leq b_{1}$, $b_{0},b_{1}>0$, the surface $M_{2}$ without boundary, which is properly immersed in $\mathbb{R}^3$ and whose mean curvature satisfies $\vert H_{M_2}\vert\leq b_{0}$, cannot lie on the mean convex side of $M_{1}$.
\end{teo}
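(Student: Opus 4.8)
The plan is to argue by contradiction and to reduce everything to the Maximum Principle at Infinity already established in Theorem \ref{Meeks-Lima}. So I would assume that $M_2$ \emph{does} lie on the mean convex side of $M_1$, that is, that $M_2$ is contained in the closed region of $\mathbb{R}^3$ cut out by $M_1$ into which the mean curvature vector of $M_1$ points, and I would aim to contradict the fact that both surfaces have empty boundary. The whole point is that the curvature bounds $|H_{M_2}|\le b_0\le H_{M_1}$ are tailored so that the one-sided configuration forces the surfaces together, either at a finite point or at infinity.

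First I would dispose of the case $M_1\cap M_2\neq\varnothing$. At any common point $p$, the inclusion of $M_2$ in the mean convex side forces the two surfaces to be tangent at $p$ with $M_2$ lying locally to one side, and the hypotheses $H_{M_2}(p)\le b_0\le H_{M_1}(p)$ (measured against the common unit normal pointing into the convex side) show that $p$ is an ideal contact at $p$ in the sense of Definition \ref{contato ideal em p}. Hopf's Maximum Principle (Theorem \ref{hopf}) then gives $M_1=M_2$ in a neighbourhood of $p$, and by connectedness together with the unique continuation property of constant-type mean curvature surfaces, $M_1$ and $M_2$ coincide; this contradicts $M_2$ lying strictly on the mean convex side. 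Hence I may assume $M_1$ and $M_2$ are disjoint.

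The core of the argument, and the step I expect to be the main obstacle, is to show that a disjoint $M_2$ trapped on the mean convex side of $M_1$ must have an ideal contact at infinity with $M_1$ in the sense of Definition \ref{contato}. The heuristic is that $M_1$ is uniformly mean convex ($H_{M_1}\ge b_0>0$) while $M_2$ is everywhere flatter ($|H_{M_2}|\le b_0$), so $M_2$ cannot remain uniformly far from $M_1$ on the convex side. I would make this precise through $d=\mathrm{dist}(M_1,M_2)$: if $d>0$, translate $M_2$ by the displacement realizing this distance (translations are isometries of $\mathbb{R}^3$ and preserve all curvature bounds) until it either first touches $M_1$, reducing to the previous paragraph via Theorem \ref{hopf}, or produces sequences $p_n\in M_1$, $q_n\in M_2$ with $|p_n-q_n|\to 0$ along which the tangent planes become parallel with coherently oriented normals. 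Checking that such sequences meet \emph{all} the requirements of an ideal contact at infinity, and in particular the asymptotic alignment of the tangent planes rather than merely the approach of the surfaces, is the delicate point I would expect to cost the most work.

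Once the ideal contact at infinity is in hand, the remaining hypotheses of Theorem \ref{Meeks-Lima} hold verbatim: $M_1$ is properly embedded with $b_0\le H_{M_1}\le b_1$ and bounded Gaussian curvature, and $M_2$ is properly immersed with $|H_{M_2}|\le b_0$. The theorem then gives $\min\{\mathrm{dist}(M_1,\partial M_2),\mathrm{dist}(M_2,\partial M_1)\}=0$. But $\partial M_1=\partial M_2=\varnothing$, so each distance is infinite and the minimum cannot vanish unless $M_1$ and $M_2$ actually meet at a finite point; and such a meeting point is once more a tangential ideal contact to which Theorem \ref{hopf} applies, forcing $M_1=M_2$ and contradicting the assumption. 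Either branch yields a contradiction, so I would conclude that $M_2$ cannot lie on the mean convex side of $M_1$.
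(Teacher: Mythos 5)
Your overall skeleton is the right one and matches the route the paper takes: the paper proves the $\mathbb{R}^{n+1}$ analogue (Theorem \ref{convexo}) by exactly your two-case scheme — if $dist(M_1,M_2)=0$ the one-sided position gives an ideal contact at infinity and the Maximum Principle at Infinity machinery produces the contradiction (empty boundaries making the conclusion of Theorem \ref{Meeks-Lima} absurd), while if $dist(M_1,M_2)>0$ one translates $M_2$ by the optimal vector $v$ and applies Theorem \ref{hopf}. One remark on where you place the difficulty: the step you call delicate is in fact immediate here, because Definition \ref{contato} demands \emph{no} asymptotic alignment of tangent planes — only $\vert y_i-x_i\vert\to 0$ and $x_i-y_i=\lambda_i\mathbf{H}_{M_1}(y_i)$, and the latter comes for free from nearest-point projection onto the properly embedded $M_1$: the minimizing segment from $x_i\in M_2$ meets $M_1$ orthogonally at $y_i$ and points into the mean convex side, i.e.\ along $\mathbf{H}_{M_1}(y_i)$.

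The genuine gap is in your treatment of the contact case. You invoke ``unique continuation of constant-type mean curvature surfaces'' to upgrade the local coincidence given by Hopf to $M_1=M_2$ globally; but the surfaces here have only \emph{bounded}, non-constant mean curvature ($b_0\leq H_{M_1}\leq b_1$, $\vert H_{M_2}\vert\leq b_0$), and no unique continuation principle applies: two such surfaces can agree on an open set (where necessarily $H\equiv b_0$) and separate smoothly elsewhere. Worse, even granting global coincidence, in the translated picture it yields $M_1=\overline{M}_2=M_2+v$, which by itself contradicts nothing — a surface could a priori sit at distance $\vert v\vert$ from its own translate on the convex side. The paper closes this case with a different, and necessary, argument: once Hopf forces $M_1$ and $\overline{M}_2$ to coincide near $p$, every nearby $q\in M_2$ satisfies $q+v\in M_1$ with $\vert (q+v)-q\vert=\vert v\vert=dist(M_1,M_2)$, so each segment $[q,q+v]$ is minimizing and hence orthogonal to both surfaces at its endpoints; the tangent plane of $M_1$ is therefore constantly $v^{\perp}$ on an open set, so $M_1$ contains a planar piece with $H=0$, contradicting $H_{M_1}\geq b_0>0$. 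Replace your unique-continuation step by this orthogonality/flatness argument (and note the small slip at the end: with $\partial M_1=\partial M_2=\emptyset$ both distances in the conclusion of Theorem \ref{Meeks-Lima} are infinite, so the contradiction is immediate and no further appeal to Theorem \ref{hopf} is needed), and your proof becomes essentially the paper's.
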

In \cite{RFL} De Lima proved the parabolic version of Theorem \ref{convexo em R3} assuming that $M_{1}$ and $M_{2}$ are $H$-surfaces, $H\neq 0$.


Our objective in this article is to extend to hypersurfaces of $\mathbb{R}^{n+1}$ Theorems \ref{H-superfices em R3} and \ref{Meeks-Lima} above. It will be done in Theorem \ref{principio maximo} where we prove the Maximum Principle at Infinity for properly embedded and disjoints hypersurfaces $M_{1}$ and $M_{2}$ in  $\mathbb{R}^{n+1}$ with nonempty boundaries. To do that, we will suppose that $M_{2}$ is complete and that  $\sup _{M_2}\vert{\bf H}_{M_2}\vert \leq\inf _{M_1} H_{M_1}$, where $H_{M_1}$ is the mean curvature of $M_{1}$ and ${\bf H}_{M_2}$ is the mean curvature vector of $M_2$. We will also assume that $M_{2}$ have an ideal contact at infinity with $M_1$ and that $M_{2}$ is parabolic. However, we will not consider any additional hypothesis about the Gaussian curvature of any hypersurface. We will need two lemmas that will be proved in section \ref{secprincipio}, lemmas \ref{lema2.1} and \ref{lema2.2}.


As an application of Theorem \ref{principio maximo} we will prove Theorem \ref{convexo}, which is a generalization of Hopf's Maximum Principle for hypersurfaces with an ideal contact, which extends Theorem \ref{convexo em R3}. Such theorem states that under certain conditions, like the ones in Theorem \ref{principio maximo}, if $M_{2}$ has an empty boundary then it cannot be on the convex side of $M_{1}$. In Theorem \ref{convexo1.2} we will extend Theorem \ref{convexo} to the case where $M_{1} \subset \mathbb{R}^{n+k}$ is a hypersurface and $M_{2} \subset \mathbb{R}^{n+k}$ a parabolic $n$-submanifold, which generalizes Theorem 4 in \cite{U-S} for asymptotic hypersurfaces.




In Theorem \ref{dist}, as in its corollaries, we used the Omory-Yau's Maximum Principle, see \cite{omori,yau}, and prove an analogous result to Theorem \ref{convexo} without the hypothesis of $M_2\subset\mathbb{R}^{n+1}$ be a parabolic hypersurface, obtaining another generalization of Hopf's Maximum Principle for asymptotics hypersurfaces of $\mathbb{R}^{n+1}$.

\section{Preliminaries}


Given a smooth and oriented hypersurface $M\subset \mathbb{R}^{n+1}$, we denote the mean curvature function and the mean curvature vector of $M$ as $H_{M}$ and $\textbf{H}_{M}$, respectively. We will also denote by $\nabla^{M}$ and $\Delta^{M}$ the gradient and Laplacian of $M$, respectively.

\subsection{Parabolic Riemannian manifold }

Let $M^{n}$ be an $n$-dimensional Riemannian manifold  with smooth (possibly empty) boundary $\partial M$ and $\Omega\subset M$ an open set of $M$. A function $h\in C^2(\Omega)$ is said \emph{subharmonic} if 
$$\Delta ^Mh\geq 0.$$

Subharmonic functions will play an important role in this section in
that we will address a class of Riemannian manifolds that are characterized
by these functions.


When a Riemannian manifold $M$ has an empty boundary $\partial M$ we will say that $M$ is parabolic if it cannot exist a non-constant upper bounded subharmonic function, namely, $M$ is parabolic if $\Delta ^Mh\geq 0$ and $\sup _Mh<+\infty$ we have $h$ cosntant. As an example of such Riemannian manifolds Cheng e Yau showed that $M=(\mathbb{R}^2,\langle ,\rangle _{can})$ is a parabolic Riemannian manifold, \cite{cheng-yau}.  In the case when $\partial M$ is nonempty will use the following definition of parabolic manifold given by De Lima in \cite{RFL}.


\begin{defi}\label{defi1}
An $n$-dimensional complete Riemannian manifold $M^{n}$ with nonempty smooth boundary $\partial M$ is called \textbf{parabolic} if for any upper bounded subharmonic function $h$ in $M$,  we have
$$\sup_{M}h=\sup_{\partial M}h.$$
\end{defi}


The next result, whose proof can be found in \cite{RFL}, gives a sufficient condition for a Riemannian manifold with nonempty boundary be parabolic.

\begin{prop}[Proposition 2 of \cite{RFL}]\label{harmparb}
Let $M$ be a complete Riemannian manifold with nonempty, smooth boundary $\partial M$. If there exist a proper, positive harmonic funcition defined on $M$, then $M$ is parabolic.
\end{prop}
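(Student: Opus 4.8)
The plan is to use the proper positive harmonic function $\phi$ as a coercive barrier and run a maximum-principle argument. First I would pin down the geometric meaning of the hypotheses on $\phi$: since $\phi>0$ and $\phi$ is proper, for every $c>0$ the sublevel set $\{\phi\le c\}=\phi^{-1}([0,c])$ is the preimage of a compact subset of $\mathbb{R}$, hence compact. Consequently $\phi$ is \emph{coercive}, i.e. $\phi(p)\to+\infty$ as $p$ leaves every compact subset of $M$. This is the feature of $\phi$ that drives the whole argument.

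Next, let $h\in C^2(M)$ be any upper bounded subharmonic function; I must show $\sup_{M}h=\sup_{\partial M}h$. Since $\partial M\subseteq M$, the inequality $\sup_{M}h\ge\sup_{\partial M}h$ is immediate, so the content is the reverse inequality. For $\epsilon>0$ set $h_\epsilon:=h-\epsilon\phi$. Because $-\epsilon\phi$ is harmonic, $h_\epsilon$ is again subharmonic, and since $h$ is bounded above while $\phi$ is coercive, $h_\epsilon\to-\infty$ at infinity; hence $h_\epsilon$ is bounded above and attains its supremum at some point $p_0\in M$.

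The key step is to locate $p_0$ on $\partial M$. If $p_0$ were an interior point, the strong maximum principle for subharmonic functions would force $h_\epsilon$ to be constant on the component of $M$ containing $p_0$, contradicting $h_\epsilon\to-\infty$ there (on a compact component the conclusion follows directly, since a constant subharmonic function already equals its boundary value). Thus the maximum of $h_\epsilon$ is realized on the boundary, and using $\phi>0$ on $\partial M$ we obtain, for every $p\in M$,
$$h(p)-\epsilon\phi(p)=h_\epsilon(p)\le\max_{\partial M}(h-\epsilon\phi)\le\sup_{\partial M}h.$$
Rearranging gives $h(p)\le\sup_{\partial M}h+\epsilon\phi(p)$, and letting $\epsilon\to0$ with $p$ fixed yields $h(p)\le\sup_{\partial M}h$. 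Taking the supremum over $p\in M$ gives $\sup_{M}h\le\sup_{\partial M}h$, which together with the trivial inequality proves $\sup_{M}h=\sup_{\partial M}h$; that is, $M$ is parabolic in the sense of Definition \ref{defi1}.

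I expect the only delicate points to be the justification that $\phi$ is coercive (which rests on reading ``proper'' correctly for a positive-valued function, so that its sublevel sets are genuinely compact) and the clean application of the strong maximum principle to rule out an interior maximum of $h_\epsilon$. The boundary estimate and the passage to the limit $\epsilon\to0$ are then routine.
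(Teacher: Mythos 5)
Your proof is correct, and it is essentially the intended argument: the paper itself states this proposition without proof, citing Proposition 2 of \cite{RFL}, and De Lima's proof there is precisely this barrier scheme --- subtract $\epsilon\phi$ from the bounded subharmonic function $h$, use coercivity of the proper positive harmonic $\phi$ (compact sublevel sets) to force $h-\epsilon\phi$ to attain its maximum, push that maximum to $\partial M$ via the strong maximum principle, and let $\epsilon\to 0$. The only point to sharpen is your parenthetical about compact components: the constancy conclusion yields a boundary maximum only if that component actually meets $\partial M$, which holds once $M$ is taken connected (as is implicit here --- indeed, for a disconnected $M$ with a closed component the proposition itself fails, since a suitable locally constant subharmonic $h$ can exceed $\sup_{\partial M}h$ there).
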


\begin{ex}\label{cilindro}
Let $C$ be a cylinder in $\mathbb{R}^3$ given by $C=\{(x_1,x_2,x_3):x_1^2+x_2^2=1,\;x_3\geq 1\}$. Consider the parametrization  given by $X(\theta ,r)=(\cos\theta ,\sin\theta ,r)$, $ 0\leq\theta < 2\pi$, $r\geq 1$. It is easy to see that the function $h(\theta ,r)=r$ on $C$ is positive, proper and harmonic. Thus, Proposition \ref{harmparb} gives $C$ parabolic.
\end{ex}



Next, we will state a proposition of fundamental importance in the demonstration of our main result, whose proof can be found in \cite{RFL}. In the following, $M^{n}$ is a complete, $n$-dimensional Riemannian manifold,  with a (possibly empty) smooth boundary $\partial M$ and $M'\subset M$ a complete and embedded $n$-dimensional Riemannian submanifold of $M$  with nonempty smooth boundary $\partial M'$.



\begin{prop}[Prposition 3 of \cite{RFL}]\label{prop1}
Let $M$ and $M'$ be like above. Then $M'$ is parabolic if $M$ is parabolic.
\end{prop}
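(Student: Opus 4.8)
The plan is to take an arbitrary subharmonic function $h \in C^2(M')$ that is bounded above, set $c := \sup_{\partial M'} h$, and manufacture from it a globally defined subharmonic function on $M$ to which the parabolicity of $M$ can be applied. Concretely, I would define $\tilde h : M \to \mathbb{R}$ by $\tilde h = \max\{h, c\}$ on $M'$ and $\tilde h = c$ on $M \setminus M'$. Since $h \le c$ on $\partial M'$ and $h$ is continuous, the two definitions agree and equal $c$ along $\partial M'$, so $\tilde h$ is continuous on $M$; moreover $\tilde h$ is bounded above because $h$ is, and $\sup_M \tilde h = \sup_{M'} h$ (as $\sup_{M'} h \ge \sup_{\partial M'} h = c$).

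The key step, and the one I expect to be the main obstacle, is verifying that $\tilde h$ is subharmonic on the interior of $M$, in particular across the interface $\partial M' \cap \operatorname{int}(M)$. Away from this interface the claim is clear: on $\operatorname{int}(M')$ the function $\max\{h, c\}$ is subharmonic as the maximum of two subharmonic functions, and on $\operatorname{int}(M \setminus M')$ the function $\tilde h \equiv c$ is harmonic. To handle the interface I would invoke the standard pasting lemma for subharmonic functions, which is local and therefore transfers from $\mathbb{R}^n$ to the Riemannian setting because $\Delta^M$ is a local elliptic operator: if $u$ is subharmonic on an open set $\Omega$, $w$ is subharmonic on an open $V \subset \Omega$, and $\limsup_{y \to x,\, y \in V} w(y) \le u(x)$ for every $x \in (\partial V) \cap \Omega$, then the function equal to $\max\{u, w\}$ on $V$ and to $u$ on $\Omega \setminus V$ is subharmonic on $\Omega$. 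Applying this with $\Omega = \operatorname{int}(M)$, $u \equiv c$, $V = \operatorname{int}(M') \cap \operatorname{int}(M)$ and $w = h|_V$, the hypothesis $\limsup h \le c$ at the interface follows from $h \le c$ on $\partial M'$, and we obtain that $\tilde h$ is subharmonic on $\operatorname{int}(M)$.

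With $\tilde h$ subharmonic and bounded above on $M$, I would finish by distinguishing the two cases of Definition \ref{defi1}. If $\partial M = \emptyset$, parabolicity of $M$ forces $\tilde h$ to be constant; since $M'$ has nonempty boundary we have $M' \ne M$, so $\tilde h \equiv c$ on the nonempty set $M \setminus M'$ gives $\tilde h \equiv c$, hence $h \le c$ on $M'$. If $\partial M \ne \emptyset$, I would first observe that $\partial M \cap M' \subseteq \partial M'$ (a point of $\partial M$ lying in $M'$ cannot be an interior point of $M'$, by the half-space local model), so $\tilde h \equiv c$ on all of $\partial M$ and thus $\sup_{\partial M} \tilde h = c$; parabolicity then yields $\sup_{M'} h = \sup_M \tilde h = \sup_{\partial M} \tilde h = c$. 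In either case $\sup_{M'} h \le c = \sup_{\partial M'} h$, and since the reverse inequality is immediate from $\partial M' \subset M'$, we conclude $\sup_{M'} h = \sup_{\partial M'} h$, proving that $M'$ is parabolic.
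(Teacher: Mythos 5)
The paper never writes out a proof of this proposition---it is quoted from De Lima with the proof deferred to \cite{RFL}---and your gluing construction $\tilde h=\max\{h,c\}$ on $M'$, $\tilde h\equiv c$ on $M\setminus M'$, with $c=\sup_{\partial M'}h$, is the standard argument for this statement. Your case analysis is sound: in particular the observation $\partial M\cap M'\subseteq\partial M'$ is exactly what is needed to conclude $\tilde h\equiv c$ on $\partial M$ when $\partial M\neq\emptyset$, and the identity $\sup_M\tilde h=\sup_{M'}h$ is correctly justified.

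Two points should be addressed before the argument is airtight within this paper's conventions. First, and most substantively: $\tilde h$ is only continuous, whereas the paper defines subharmonicity for functions in $C^2$, and both Definition \ref{defi1} and the empty-boundary notion of parabolicity quantify only over such functions. The pasting lemma you invoke yields subharmonicity in the potential-theoretic sense (upper semicontinuity plus the local sub-mean-value property), and $\max\{h,c\}$ genuinely fails to be $C^2$ along the set $\{h=c\}$, so as written you apply the parabolicity of $M$ to a function outside the class for which it is stated. This is repairable by a standard remark---parabolicity with respect to $C^2$ subharmonic functions is equivalent to parabolicity with respect to continuous weakly subharmonic ones, e.g.\ by local harmonic replacement---but the bridge must be stated; note that a naive smoothing such as replacing $\max\{h,c\}$ by $\frac{1}{2}\bigl(h+c+\sqrt{(h-c)^2+\epsilon^2}\bigr)$ stays subharmonic on $M'$ (convex nondecreasing composition) but reintroduces a kink when glued to the constant across the interface, so smoothing alone does not dispose of the issue. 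Second, a smaller point: your identification of the topological interface $\partial V\cap\operatorname{int}(M)$ with the manifold boundary $\partial M'$, and indeed the continuity of $\tilde h$, implicitly use that $M'$ is closed in $M$. ``Complete and embedded'' alone does not force this: a thickened spiral accumulating on a circle in $\mathbb{R}^2$ is a complete, embedded, codimension-zero submanifold with boundary whose closure is strictly larger, and there your $\tilde h$ need not even be upper semicontinuous at the accumulation set. The intended reading---and the only one used in the paper's application, where $M'=C_2(\epsilon)$ is a closed subset of $M_2$---is that $M'$ is properly embedded (closed in $M$), and you should state that you are using this.
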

We will also use the following lemma, found in \cite{L-T}.


\begin{lema}[Lemma 2.3 of \cite{L-T}]\label{lema1.1}
Let $A$ be a quadratic form in an $n$-dimensio\-nal Euclidean vectorial space with eigenvalues $\lambda_ {1} \leq \cdots \leq \lambda _{k} \leq \cdots \leq \lambda_{n}$. Then for any $k$-dimensional subspace $W \subset V$ we have 
$$\mathrm{tr} A \mid_{W}\geq \lambda_{1} + \cdots + \lambda_{k} .$$
\end{lema}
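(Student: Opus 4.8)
The plan is to reduce the statement to an elementary weighting (rearrangement) argument after diagonalizing $A$. First I would identify the quadratic form $A$ with its associated self-adjoint operator and fix an orthonormal eigenbasis $v_{1},\dots,v_{n}$ of $V$ with $Av_{i}=\lambda_{i}v_{i}$. Choosing an orthonormal basis $e_{1},\dots,e_{k}$ of the given $k$-dimensional subspace $W$, so that by definition $\mathrm{tr}\,A\mid_{W}=\sum_{j=1}^{k}\langle Ae_{j},e_{j}\rangle$, I would expand $e_{j}=\sum_{i=1}^{n}c_{ji}v_{i}$ and use $Av_{i}=\lambda_{i}v_{i}$ to get $\langle Ae_{j},e_{j}\rangle=\sum_{i=1}^{n}\lambda_{i}c_{ji}^{2}$, whence
$$\mathrm{tr}\,A\mid_{W}=\sum_{i=1}^{n}\lambda_{i}m_{i},\qquad m_{i}:=\sum_{j=1}^{k}c_{ji}^{2}.$$

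The heart of the argument is to record the two constraints satisfied by the weights $m_{i}$. Since $e_{1},\dots,e_{k}$ are orthonormal, $m_{i}$ is exactly the squared norm of the orthogonal projection of the unit vector $v_{i}$ onto $W$, so $0\le m_{i}\le 1$; and summing over $i$ and using $\sum_{i}c_{ji}^{2}=\lVert e_{j}\rVert^{2}=1$ yields $\sum_{i=1}^{n}m_{i}=k$. Thus the problem becomes purely combinatorial: minimize $\sum_{i}\lambda_{i}m_{i}$ over weights in $[0,1]$ of total mass $k$, given $\lambda_{1}\le\cdots\le\lambda_{n}$.

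Finally I would establish $\sum_{i=1}^{n}\lambda_{i}m_{i}\ge\sum_{i=1}^{k}\lambda_{i}$ directly. Writing the difference as $\sum_{i=1}^{k}\lambda_{i}(m_{i}-1)+\sum_{i=k+1}^{n}\lambda_{i}m_{i}$ and comparing each term against $\lambda_{k}$ — using $\lambda_{i}\le\lambda_{k}$ together with $m_{i}-1\le 0$ for $i\le k$, and $\lambda_{i}\ge\lambda_{k}$ with $m_{i}\ge 0$ for $i>k$ — bounds the difference below by
$$\lambda_{k}\Big(\sum_{i=1}^{n}m_{i}-k\Big)=0.$$
This gives the claimed inequality, with equality precisely when $W=\mathrm{span}\{v_{1},\dots,v_{k}\}$.

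The only delicate point is the bookkeeping that establishes $0\le m_{i}\le 1$ and $\sum_{i}m_{i}=k$; once those two constraints are in hand, the remaining rearrangement estimate is routine. I expect no analytic input to be needed, as the statement is entirely linear-algebraic.
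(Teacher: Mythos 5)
The paper does not prove this lemma at all: it is quoted verbatim, with proof deferred to Lemma~2.3 of the cited Jorge--Tomi paper, so there is no in-paper argument to compare against. Judged on its own, your proof is correct and complete, and it is the standard argument for this statement: diagonalizing $A$, writing $\mathrm{tr}\,A\mid_{W}=\sum_{i=1}^{n}\lambda_{i}m_{i}$ with $m_{i}=\sum_{j=1}^{k}\langle v_{i},e_{j}\rangle^{2}$, and verifying the two constraints $0\le m_{i}\le 1$ (as $m_{i}$ is the squared norm of the projection of the unit vector $v_{i}$ onto $W$) and $\sum_{i}m_{i}=k$ (from $\lVert e_{j}\rVert=1$) reduces everything to the weighted rearrangement bound, and your comparison of each term against $\lambda_{k}$ --- using that $m_{i}-1\le 0$ flips the inequality $\lambda_{i}\le\lambda_{k}$ for $i\le k$, while $m_{i}\ge 0$ preserves $\lambda_{i}\ge\lambda_{k}$ for $i>k$ --- is airtight, yielding exactly $\lambda_{k}\bigl(\sum_{i}m_{i}-k\bigr)=0$ as a lower bound for the difference. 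An equivalent route would be the Courant--Fischer min-max characterization of eigenvalues, but your weight argument is more elementary and self-contained.

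One small inaccuracy in a side remark: equality does \emph{not} hold ``precisely when $W=\mathrm{span}\{v_{1},\dots,v_{k}\}$.'' If eigenvalues are degenerate (e.g.\ $A$ the identity, where every $k$-dimensional $W$ gives $\mathrm{tr}\,A\mid_{W}=k$), many other subspaces achieve equality; what your proof actually shows is that equality forces $m_{i}=1$ whenever $\lambda_{i}<\lambda_{k}$ and $m_{i}=0$ whenever $\lambda_{i}>\lambda_{k}$. Since the lemma claims only the inequality, this does not affect the validity of your proof.
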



Comparing to the maximum principle, we will define next the ideal contact at a point (Definition \ref{contato ideal em p}) and show Hopf's Maximum Principle (Theorem \ref{hopf}) which, inspired the concept of an ideal contact at infinity (see Definition \ref{contato}) for hypersurfaces, \cite{RFL,R-M}.


\begin{defi}[Ideal Contact at $p$]\label{contato ideal em p}
Let $M_{1}$ and $M_{2}$ be two oriented hypersurfaces in $\mathbb{R}^{n+1}$. If $M_{1}$ and $M_{2}$ are tangent at an interior point $p$ and has the same unit normal $\eta_{0}$ at $p$, we will say that they have an \textbf{Ideal contact at $p$}. We also say that $M_{1}$ lies above $M_{2}$ near $p$ with respect to $\eta_{0}$, if when we express $M_{1}$ and $M_{2}$ as graphics of function $\phi_{1}$ and $\phi_{2}$ over the tangent hyperplan in $p$ we have $\phi_{1} \ge \phi_{2}$ in a neighbourghood of $p$.
\end{defi}


\begin{teo}[Hopf's Maximum Principle, \cite{tangencia}]\label{hopf}
Let $M_1$ and $M_2$ be oriented hypersurfaces in $\mathbb{R}^{n+1}$ which have a contact at a point p. Let $H_{M_1}$ and $H_{M_2}$ be their mean curvature function, respectively. If $H_{M_1} \leq H_{M_2}$ at $p$ then $M_1$ cannot lie above $M_2$, unless they coincide in a neighborhood of $p$.
\end{teo}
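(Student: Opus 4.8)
The plan is to reduce the geometric statement to the classical strong maximum principle of E. Hopf for a linear, uniformly elliptic second-order operator. Since $M_1$ and $M_2$ have a contact at $p$, I would first choose Euclidean coordinates so that $p$ is the origin, the common tangent hyperplane $T_pM_1=T_pM_2$ is $\{x_{n+1}=0\}$, and the common unit normal is $\eta_0=e_{n+1}$. In a neighbourhood $U\subset\mathbb{R}^n$ of the origin both hypersurfaces are then graphs $x_{n+1}=\phi_i(x)$, $i=1,2$, of smooth functions with $\phi_i(0)=0$ and $\nabla\phi_i(0)=0$, the latter encoding the tangency. The hypothesis that $M_1$ lies above $M_2$ near $p$ with respect to $\eta_0$ becomes $w:=\phi_1-\phi_2\ge 0$ on $U$ with $w(0)=0$, so $w$ attains an interior minimum at the origin.

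Next I would write the mean curvature of a graph as the quasilinear elliptic operator
$$Q[\phi]:=\mathrm{div}\!\left(\frac{\nabla\phi}{\sqrt{1+|\nabla\phi|^2}}\right)=nH[\phi],$$
the mean curvature being computed with respect to the upward normal consistent with $\eta_0$. The heart of the argument is to linearize the difference $Q[\phi_1]-Q[\phi_2]$. Writing $\phi_t:=t\phi_1+(1-t)\phi_2$ and using the fundamental theorem of calculus,
$$Q[\phi_1]-Q[\phi_2]=\int_0^1\frac{d}{dt}Q[\phi_t]\,dt,$$
one obtains a representation
$$Q[\phi_1]-Q[\phi_2]=\sum_{i,j}a_{ij}(x)\,\partial_{ij}w+\sum_i b_i(x)\,\partial_i w=:Lw,$$
where the coefficients $a_{ij},b_i$ are integrals over $t\in[0,1]$ of the partial derivatives of the symbol of $Q$ evaluated along $(\nabla\phi_t,\mathrm{Hess}\,\phi_t)$. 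Because the principal symbol of $Q$ is positive definite, the matrix $(a_{ij})$ is positive definite on $U$, and after shrinking $U$ to keep the gradients bounded one gets uniform ellipticity; moreover $L$ has no zeroth-order term.

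Finally I would invoke the hypothesis $H_{M_1}\le H_{M_2}$, which yields
$$Lw=Q[\phi_1]-Q[\phi_2]=n\big(H_{M_1}-H_{M_2}\big)\le 0,$$
so $w$ is a nonnegative supersolution of a uniformly elliptic operator without zeroth-order term that attains its minimum value $0$ at the interior point $0\in U$. Hopf's strong maximum principle then forces $w$ to be constant, hence $w\equiv 0$ on the component of $U$ containing the origin; equivalently $\phi_1\equiv\phi_2$, so $M_1$ and $M_2$ coincide in a neighbourhood of $p$, which is the contrapositive of the assertion. I expect the main obstacle to be the linearization step: one must verify that the difference of the two quasilinear operators is a genuinely elliptic linear operator with the right sign, and track the orientation conventions so that $H_{M_1}\le H_{M_2}$ translates into $Lw\le 0$ rather than the opposite. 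It is precisely the interplay between the geometric condition of lying above (which by itself already forces $H_{M_1}(p)\ge H_{M_2}(p)$ by the first-order comparison at the minimum) and the curvature hypothesis that produces equality and then rigidity.
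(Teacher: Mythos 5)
The paper itself offers no proof of this theorem: it is quoted verbatim as a known result from the cited reference of Fontenele--Silva, and the proof there is exactly your reduction --- graphs over the common tangent hyperplane, the divergence-form mean curvature operator $Q[\phi]=\mathrm{div}\bigl(\nabla\phi/\sqrt{1+|\nabla\phi|^2}\bigr)=nH[\phi]$, linearization of $Q[\phi_1]-Q[\phi_2]$ along $\phi_t=t\phi_1+(1-t)\phi_2$, and E.~Hopf's strong maximum principle applied to $w=\phi_1-\phi_2\ge 0$. The mechanics of your argument are sound: uniform ellipticity of $(a_{ij})$ after shrinking $U$ to control the gradients, the absence of a zeroth-order term (which follows from the vertical translation invariance of the graph operator), and the orientation bookkeeping are all handled correctly.

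There is, however, one genuine gap relative to the statement as written. The hypothesis here is $H_{M_1}\le H_{M_2}$ \emph{at the single point} $p$, while your key step
$$Lw=n\bigl(H_{M_1}(x,\phi_1(x))-H_{M_2}(x,\phi_2(x))\bigr)\le 0\quad \text{on } U$$
requires the inequality at \emph{corresponding points} $(x,\phi_1(x))\in M_1$ and $(x,\phi_2(x))\in M_2$ throughout a whole neighbourhood; a one-point inequality gives no sign for $Lw$ away from the origin, and the strong maximum principle cannot be invoked. Indeed the literal statement is false: take $\phi_2\equiv 0$ and $\phi_1(x)=|x|^4$. Then $M_1$ lies above $M_2$, they are tangent at $p=0$ with the same normal, both mean curvatures vanish at $p$ (so $H_{M_1}\le H_{M_2}$ at $p$ holds), yet the hypersurfaces do not coincide near $p$. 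The tangency principle of Fontenele--Silva that the paper is citing assumes precisely the neighbourhood version of the comparison, and that is the theorem your proof actually establishes; you should state that hypothesis explicitly rather than inherit the paper's loose ``at $p$''. One small slip besides: the comparison at the tangency point that already forces $H_{M_1}(p)\ge H_{M_2}(p)$ is a second-order comparison (nonnegativity of $\mathrm{Hess}\,w$ at an interior minimum where $\nabla w$ vanishes), not a first-order one as you wrote.
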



\section{The Maximum Principle at Infinity for hypersurfaces in $\mathbb{R}^{n+1}$}\label{secprincipio}


We introduze now, and we will use it in the course of this work, the definition of {\it Ideal Contact at Infinity} used by De Lima and Meeks, \cite{R-M}.

\begin{defi}[Ideal contact at infinity]\label{contato}
Let $M_{1}$ be a propperly embedded hypersurface  in $\mathbb{R}^{n+k}$ with a positive mean curvature function. We say that an $n$-dimensional submanifold $M_{2} \subset\mathbb{R}^{n+k}$ has an \textit{Ideal Contact at Infinity} with $M_{1}$ if $M_{1}$ and $M_{2}$ are disjoints and there exist sequences of interior points $y_i\in M_1$,  $x_i\in M_2$ and $\lambda _i >0$, $i\in \mathbb{N}$, with
\begin{center}
$\vert y_i - x_i\vert\rightarrow 0$ \ \ \ \ \ \ and\ \ \ \ \ \  $x_i - y_i = \lambda _i{\bf H}_{M_1}(y_i)$
\end{center}
always when $i\rightarrow +\infty$. , Figure \ref{deibsom}.
\end{defi}

Here, as in \cite{R-M}, we say that two disjoints and properly imersed hypersurfaces $M_1$ and $M_2$, with nonempty boundaries $\partial M_1$  and $\partial M_2$  satisfy the {\it Maximum Principles at Infinity} if
$$dist(M_1,M_2)=\min\{dist (M_1,\partial M_2), dist(M_2,\partial M_1)\},$$
were $dist$ is the distance in $\mathbb{R}^{n+1}$.

\begin{figure}[!htb]
  
    \centering
    
    \includegraphics[scale =.35]{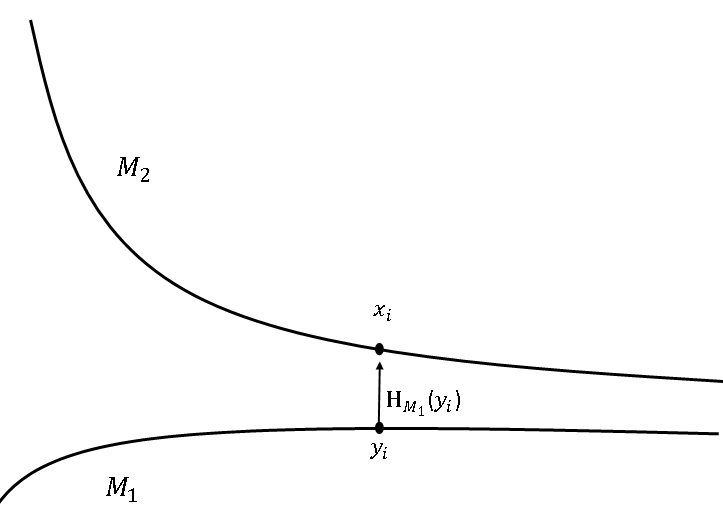}
    
   \caption{Ideal Contact at Infinity}
    
    \label{deibsom}
\end{figure}


In this section we state the main result of this paper. Here we suppose $M_{1} \subset \mathbb{R}^{n+1}$ to be an oriented smooth hypersurface with positive mean curvature $H_{M_{1}}$.

 
\begin{teo}[Maximum Principles at Infinity]\label{principio maximo}
Let $M_{1}$ and $M_{2}$ two propperly embedded and disjoints hypersurfaces in $\mathbb{R}^{n+1}$ with  nonempty boundaries $\partial M_{1}$ and $\partial M_{2}$. Suppose that $M_{2}$ is complete and that
 \begin{equation}
    \sup _{M_2}\vert\mathbf{H}_{M_2}\vert \leq b_0\leq\inf _{M_1}H_{M_1}\label{hip1},\ \ b_0>0.
 \end{equation}
 If $M_{2}$ has an ideal contact at infinity with $M_{1}$ and $M_2$ is parabolic, then
 \begin{equation}
 \min\{dist(M_1,\partial M_2),dist(M_2,\partial M_1)\}=0.\label{tese}
 \end{equation}
\end{teo}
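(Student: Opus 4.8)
The plan is to argue by contradiction, using the parabolicity of $M_2$ together with the trace estimate of Lemma \ref{lema1.1}. Suppose, contrary to \eqref{tese}, that
\[
D:=\min\{\mathrm{dist}(M_1,\partial M_2),\mathrm{dist}(M_2,\partial M_1)\}>0 .
\]
First I would record the elementary consequence of the ideal contact at infinity (Definition \ref{contato}): since there are interior points $x_i\in M_2$, $y_i\in M_1$ with $|x_i-y_i|\to 0$, one has $\mathrm{dist}(M_1,M_2)=0$, and because $M_1$ and $M_2$ are disjoint this value is not attained. Moreover $x_i-y_i=\lambda_i\mathbf{H}_{M_1}(y_i)$ with $\lambda_i>0$ shows that the $x_i$ lie on the \emph{mean convex side} of $M_1$, and the assumption $D>0$ forces the $x_i$ (resp. $y_i$) to stay at distance at least $D$ from $\partial M_2$ (resp. $\partial M_1$), so the whole approach takes place in the interior, far from both boundaries.

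Next I would introduce the comparison function. Let $\rho$ be the one--sided distance to $M_1$ measured on its mean convex side; on a tubular neighbourhood $U$ of $M_1$ of radius $r_0$ this $\rho$ is smooth, $|\overline\nabla\rho|=1$, and $\overline\nabla\rho$ points in the direction of $\mathbf{H}_{M_1}$. For $i$ large the points $x_i$ lie in $U$ and $\rho(x_i)\le|x_i-y_i|\to 0$. Let $N$ be the connected component of $M_2\cap U$ containing a tail of $(x_i)$. For a generic choice of $r_0$, $N$ is a complete embedded submanifold of $M_2$ with smooth boundary $\partial N\subset\partial M_2\cup(M_2\cap\partial U)$ (this is the kind of technical normalisation that I expect Lemma \ref{lema2.2} to provide), and since $M_2$ is parabolic, Proposition \ref{prop1} guarantees that $N$ is parabolic as well.

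The heart of the argument, and the step I expect to be hardest, is to prove that $-\rho|_N$ is subharmonic and bounded above. Boundedness above is immediate ($-\rho\le 0$). For subharmonicity I would compute, for an orthonormal frame $\{e_i\}$ of $TN$,
\[
\Delta^{N}\rho=\sum_{i=1}^{n}\overline{\mathrm{Hess}}\,\rho(e_i,e_i)+\langle\mathbf{H}_{M_2},\overline\nabla\rho\rangle ,
\]
and bound the first term from above by applying Lemma \ref{lema1.1} to $-\overline{\mathrm{Hess}}\,\rho$, whose eigenvalues are $0$ together with the principal curvatures of the level hypersurface $\{\rho=\rho(\cdot)\}$. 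The delicate point is that Lemma \ref{lema1.1} controls the trace over an $n$--plane only by the $n$ smallest eigenvalues, so one must use the geometry of the mean convex side—namely that the parallel hypersurfaces inherit a mean curvature bounded below by $\inf_{M_1}H_{M_1}\ge b_0$—to recover the full level--set mean curvature, while $\langle\mathbf{H}_{M_2},\overline\nabla\rho\rangle\ge-\sup_{M_2}|\mathbf{H}_{M_2}|\ge-b_0$. The comparison \eqref{hip1} is exactly what makes these two contributions combine with the correct sign to give $\Delta^{N}\rho\le 0$; this is, I believe, the content of Lemma \ref{lema2.1}.

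Finally I would invoke parabolicity. By Definition \ref{defi1}, applied to the bounded subharmonic function $-\rho$ on the parabolic manifold $N$,
\[
\sup_{N}(-\rho)=\sup_{\partial N}(-\rho),\qquad\text{i.e.}\qquad\inf_{N}\rho=\inf_{\partial N}\rho .
\]
On one hand $\inf_{N}\rho=0$, attained in the limit along $(x_i)$. On the other hand, on $\partial N\subset\partial M_2\cup(M_2\cap\partial U)$ one has $\rho\ge\min\{D,r_0\}>0$. This contradicts the identity above, so the assumption $D>0$ is untenable and \eqref{tese} follows.
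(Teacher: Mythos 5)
Your overall scheme --- contradiction, restriction of $M_2$ to the region near $M_1$, parabolicity of that piece via Proposition \ref{prop1}, and the boundary sup property of Definition \ref{defi1} --- is the same as the paper's, but the key analytic step contains a genuine gap: $-\rho$ is in general \emph{not} subharmonic on $N$, and the lemmas you invoke do not deliver $\Delta^{N}\rho\le 0$. The difficulty you correctly flag (applying Lemma \ref{lema1.1} to $-\overline{\mathrm{Hess}}\,\rho$ over the $n$-plane $T_xM_2$ puts the zero eigenvalue, corresponding to the $\overline{\nabla}\rho$-direction, among the $n$ smallest, so one principal curvature is lost and the bound $nb_0$ is out of reach) is resolved in Lemma \ref{lema2.1} by projecting $T_xM_2$ into the tangent space of the level hypersurface $M_{1d}$ and computing $\mathrm{tr}\,A_d\mid_{T_xM_2^T}$ there, so that Lemma \ref{lema1.1} sees only the principal curvatures of $M_{1d}$. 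But that projection is not orthonormal, and the price is the correction term
\[
\sum_{i,j}^{n}\upsigma_{ij}II_d(e_i^T,e_j^T)=\frac{II_d\bigl((\nabla^{M_2}d)^T,(\nabla^{M_2}d)^T\bigr)}{1-\vert\nabla^{M_2}d\vert^2},
\]
which has no favorable sign (it is positive, for instance, where the level sets are convex). Consequently the sharp conclusion of Lemma \ref{lema2.2} is only $\Delta^{M_2}d\le C_0\vert\nabla^{M_2}d\vert^2$, not $\Delta^{M_2}d\le 0$, and your final step collapses: Definition \ref{defi1} cannot be applied to $-\rho$. The paper repairs exactly this point by substituting $\phi=e^{-C_0 d}$, for which $\Delta^{M_2}\phi=-C_0e^{-C_0d}\bigl(\Delta^{M_2}d-C_0\vert\nabla^{M_2}d\vert^2\bigr)\ge 0$; then $\phi$ is bounded and subharmonic, $\sup_{C_2(\epsilon)}\phi=1$ is approached along the ideal-contact sequence, while $\sup_{\partial C_2(\epsilon)}\phi=e^{-C_0\epsilon}<1$, which is the contradiction. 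Your argument becomes correct once this exponential substitution is inserted; without it, the parabolicity step fails at every point where $\nabla^{M_2}d\neq 0$.

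A secondary gap is your tubular neighbourhood $U$ of \emph{uniform} radius $r_0$: since the theorem deliberately assumes no bound on the second fundamental form of $M_1$ (no Gaussian-curvature-type hypothesis, unlike Theorems \ref{H-superfices em R3} and \ref{Meeks-Lima}), $M_1$ need not admit a tubular neighbourhood of any fixed radius, and $\rho$ need not be $C^2$, nor have unique foot points, at distance $r_0$ from $M_1$. The paper avoids this by working instead with the set $M_2'(\epsilon)=\{x\in M_2(\epsilon): S_x\neq\emptyset\}$, where $S_x$ only requires a nearest point $y\in M_1$ with $x-y=\lambda\mathbf{H}_{M_1}(y)$, $\lambda>0$, and by using the regularity of $d$ pointwise where $d(x)\ll 1$; its connected component $C_2(\epsilon)$ with $\epsilon<m_0$ plays the role of your $N$, with $\partial C_2(\epsilon)=\{x: d(x)=\epsilon\}$. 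Finally, a minor normalization slip: the correct identity is $\Delta^{M_2}\rho=\sum_{i}\overline{\mathrm{Hess}}\,\rho(e_i,e_i)+n\langle\mathbf{H}_{M_2},\overline{\nabla}\rho\rangle$ with the averaged mean curvature vector, so your mean curvature term is missing the factor $n$; this only affects constants, not the structure of the argument.
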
 
 

Such theorem is a generalization of Theorems \ref{H-superfices em R3} and \ref{Meeks-Lima}. Its demonstration leads us to Theorem \ref{convexo} which is a generalization of Hopf's Maximum Principle in $\mathbb{R}^{n+1}$ for hypersurfaces with an ideal contact at infinity (see Definition \ref{contato}) in a way that they are disjoints hypersurfaces that approach each other asymptotically. Exemple \ref{contracii} bellow shows that Theorem \ref{principio maximo} may be false without the hypothesis of {\it Ideal Contact at Infinity}.

\begin{ex}\label{contracii}
Let $M_1$ be the surface of revolution obtained by rotating the curve $\alpha (t)=(t,0,\dfrac{1}{1-t^2})$, $0<t_0< t<1$, about the $z$ axis and $M_2$ the cylinder $M_2=\{(x,y,z):x^2+y^2=1,\; z>z_0>0$\}. By Exemple \ref{cilindro} $M_2$ is parabolic, is disjoint of $M_1$ and we have $\sup _{M_2} \vert H_{M_2}\vert =\dfrac{1}{2}$. We also have 
$\inf _{M_1}H_{M_1}=\dfrac{1}{2}$, but $$0=dist(M_1,M_1)<\min\{dist(M_1,\partial M_2),dist(M_2,\partial M_1)\}.$$
This happens because there is no $y\in M_1$, $x\in M_2$ and $\lambda >0$ such that, $x-y=\lambda \mathbf{H}_{M_1}(y)$, that is, $M_2$ does not have an ideal contact at infinity with $M_1$.

\end{ex}

\subsection{Preliminary results}

In the demonstration of Theorem \ref{principio maximo} we will use Lemmas \ref{lema2.1} and \ref{lema2.2}, that demand the following assumptions: Let $M_{1} \subset \mathbb{R}^{n+k}$ be a hypersurface, at least $C^{2}$, with mean curvature $H_{M_{1}}$ according to the unit normal $\eta$. We denote by
$$\Uplambda _n:=\dfrac{1}{n}(\lambda _1 + \cdots + \lambda _n)$$
the $n$-th mean curvature with respect to $\eta$, where $\lambda_{1} \leq \lambda_{2}\leq\cdots\leq \lambda_{n+k-1} $ are the principal curvatures of $M_{1}$ with respect to $\eta$. Also let $M_2\subset \mathbb{R}^{n+k}$ be a $n$-dimensional $C^{2}$ submanifold, $n\geq 1$, with mean curvature vector $${\bf H}_{M_2}=-\dfrac{1}{n}\sum_{r=1}^{k}(\textmd{div}\;\eta ^r)\eta ^r,$$ where $\eta^{1}, \cdots, \eta^{k}$ are orthonormal vector fields normal to $M_{2}$.


Let $d$ be the distance function $d(x):=dist(x,M_{1})$. Such function is of class $C^{2}$ in a neighborhood of $M_{1}$, Lipschitz with constant 1 and oriented by the choice of $\eta$, i.e., $\eta(y)=Dd(x)$, where $y\in M_{1}$ is such that $\vert x-y\vert=d(x)$ and $D=\left(\dfrac{\partial}{\partial x^1}, \dots , \dfrac{\partial}{\partial x^{n+k}}\right)$ is the gradient of $\mathbb{R}^{n+k}$, see \cite{U-S,G-T} . This way, the point $x$ is such as $x=y+d(x)\eta(y)$.


For each $x$ close to $M_{1}$ we consider a parallel hypersurface
$$M_{1d(x)}=\{p\in \mathbb{R}^{n+k}:d(p)=d(x)\}=d^{-1}(d(x)).$$
Such hypersurfaces are of class $C^{2}$ and have principal curvatures at $x_{0}$ given by
$$\dfrac{\lambda _1(y_0)}{1-\lambda _1d(x_0)}\leq \dfrac{\lambda _2(y_0)}{1-\lambda _2d(x_0)}\leq\cdots\leq \dfrac{\lambda _{n+k-1}(y_0)}{1-\lambda _{n+k-1}d(x_0)},$$
where $y_{0} \in M_{1}$ is such that $\vert x_{0} - y_{0}\vert = d(x_0)$ and $\lambda _1(y_0)\leq\lambda _2(y_0)\leq\cdots\leq\lambda _{n+k-1}(y_0)$ are the principal curvatures of $M_{1}$ at $y_{0}$ if $\vert d\vert \ll1$, \cite{U-S,G-T}. Observe that this give us
$$\dfrac{1}{n}\left(\dfrac{\lambda _1(y_0)}{1-\lambda _1d(x_0)}+ \dfrac{\lambda _2(y_0)}{1-\lambda _2d(x_0)}+\cdots + \dfrac{\lambda _{n}(y_0)}{1-\lambda _{n}d(x_0)}\right)\geq \dfrac{1}{n}(\lambda _1 + \cdots + \lambda _n)$$ for any hypersurface $M_{1d}$. In other words, the $n$-mean curvature $\Uplambda_{n}(x_{0})$ of $M_{1d}$ at $x_{0}$ is not smaller than the $n$-mean curvature of $M_1$ at $y_0$.


In the following let $\{e_{1}, e_{2}, \cdots , e_{n}\}$ be an orthonormal basis of $T_xM_2$ and denote by $$e_i^{T}:=e_i-\langle e_i, \eta \rangle \eta$$ the orthogonal projection of $e_{i}$ over the tangent space $T_xM_{1d}$. Also let $T_xM_2^{T}$ be the orthogonal projection space of $T_xM_2$ over $T_xM_{1d}$. Finally, let $II_d$ and $A_d$ be the second fundamental form and the shape operator of $M_{1d}$, with respect to $\eta$, respectively.


The following lemma is an adaptation of Lemma 1 in \cite{U-S}, with an analogue demonstration.


\begin{lema}\label{lema2.1}
Let $M_{1}$ and $M_{2}$ be like above and $d$ be the distance function $d(x)=dist(x,M_{1})$. Suppose in addition that $M_{2}$ has an ideal contact at infinity with $M_{1}$. So, we have
$$\Delta ^{M_2}d - \displaystyle\sum_{i,j}^{n}\upsigma_{ij}II_d(e_i^T,e_j^T) - n\langle {\bf H}_{M_2},Dd\rangle + \mathrm{tr}A_d\mid_{T_xM_2^T}=0,$$
where $\upsigma_{ij}=\dfrac{\nabla _{e_i}^{M_2}d\nabla _{e_j}^{M_2}d}{1-\vert\nabla^{M_2}d\vert ^2}$ and $\nabla_{e_i}^{M_2}$ is the derivative in the direction of $e_{i}$.
\end{lema}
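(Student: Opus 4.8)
The plan is to compute the Laplacian of the distance function $d$ along $M_2$ by expressing $\Delta^{M_2}d$ in terms of the ambient Hessian $D^2 d$ and the second fundamental form of $M_2$, then projecting the geometry of the level hypersurface $M_{1d}$ onto the tangent directions of $M_2$. The essential identity is the classical formula relating the intrinsic Laplacian of a function restricted to a submanifold to its ambient Hessian and the mean curvature vector: for the orthonormal frame $\{e_1,\dots,e_n\}$ of $T_xM_2$,
\[
\Delta^{M_2}d = \sum_{i=1}^{n} D^2 d(e_i,e_i) - n\langle {\bf H}_{M_2}, Dd\rangle,
\]
where the $-n\langle {\bf H}_{M_2},Dd\rangle$ term arises from the normal part of the second-order derivatives, using the definition ${\bf H}_{M_2}=-\frac{1}{n}\sum_r(\mathrm{div}\,\eta^r)\eta^r$. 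This immediately accounts for the third summand in the claimed identity, so the heart of the matter is to evaluate $\sum_i D^2 d(e_i,e_i)$.

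**Next I would** analyze the ambient Hessian $D^2 d$. Since $d$ is the distance to $M_1$ and $Dd=\eta$ is the unit normal to the level hypersurfaces $M_{1d}$, the Hessian $D^2 d$ annihilates the normal direction $\eta$ (because $|Dd|\equiv 1$ gives $D^2 d(\cdot,\eta)=0$) and on the tangent space $T_xM_{1d}$ it coincides with the shape operator $A_d$ of $M_{1d}$, i.e. $D^2 d(u,v) = -II_d(u,v)$ for $u,v\in T_xM_{1d}$ with the appropriate sign convention from $\eta(y)=Dd(x)$. Decomposing each $e_i = e_i^T + \langle e_i,\eta\rangle\eta$ and substituting into $D^2 d(e_i,e_i)$, the normal components drop out and I am left with contributions from $D^2 d(e_i^T,e_j^T)$. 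The subtlety is that $\{e_i^T\}$ is not orthonormal in $T_xM_{1d}$, so $\sum_i II_d(e_i^T,e_i^T)$ does not directly equal $\mathrm{tr}\,A_d\mid_{T_xM_2^T}$; the discrepancy is precisely what the coefficients $\upsigma_{ij}$ are designed to correct. I would verify that the Gram matrix of the projected frame, which involves $\langle e_i^T,e_j^T\rangle = \delta_{ij}-\langle e_i,\eta\rangle\langle e_j,\eta\rangle$, produces the factor $\frac{1}{1-|\nabla^{M_2}d|^2}$ through $\langle e_i,\eta\rangle = \nabla^{M_2}_{e_i}d$ and the relation $|\nabla^{M_2}d|^2 = \sum_i\langle e_i,\eta\rangle^2$.

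**The key bookkeeping step** is therefore to reconcile $\sum_{i=1}^n II_d(e_i^T,e_i^T)$ with the trace of $A_d$ over the projected space $T_xM_2^T$, correcting for the non-orthonormality via the $\upsigma_{ij}$ terms. Concretely, I expect the identity to emerge by writing the trace of $A_d$ restricted to the $n$-dimensional subspace $T_xM_2^T$ in an adapted orthonormal basis and comparing term-by-term with $\sum_i II_d(e_i^T,e_i^T)$; the difference assembles exactly into $\sum_{i,j}\upsigma_{ij}II_d(e_i^T,e_j^T)$. This linear-algebra reconciliation, rather than any deep geometric input, is where the main obstacle lies, since one must carefully track how the shape operator acts on a frame that is orthonormal in $T_xM_2$ but only spans $T_xM_2^T$ after projection.

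**Finally**, the hypothesis that $M_2$ has an ideal contact at infinity with $M_1$ guarantees, through the condition $x_i-y_i=\lambda_i{\bf H}_{M_1}(y_i)$, that the points under consideration lie in the tubular neighborhood where $d$ is $C^2$ and $|d|\ll 1$, so that the parallel hypersurfaces $M_{1d}$ and their principal curvatures $\frac{\lambda_j(y_0)}{1-\lambda_j d(x_0)}$ are well defined; this legitimizes all the differential-geometric computations above. Assembling the three pieces — the Laplacian decomposition, the Hessian identification with $A_d$, and the $\upsigma_{ij}$ correction — yields the stated identity
\[
\Delta^{M_2}d - \sum_{i,j}^n \upsigma_{ij}II_d(e_i^T,e_j^T) - n\langle {\bf H}_{M_2},Dd\rangle + \mathrm{tr}\,A_d\mid_{T_xM_2^T}=0.
\]
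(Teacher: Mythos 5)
Your plan follows the same skeleton as the paper's proof (ambient-Hessian decomposition of $\Delta^{M_2}d$, identification of $D^2d$ with $-II_d$ on $T_xM_{1d}$ via $D^2d(\cdot,\eta)=0$, and Gram-matrix inversion producing the $\upsigma_{ij}$), but your opening identity carries a sign error that, if propagated honestly, yields the lemma with the wrong sign on the mean-curvature term. With the paper's definition $\mathbf{H}_{M_2}=-\frac{1}{n}\sum_r(\mathrm{div}\,\eta^r)\eta^r$, one has $\langle\overline{\nabla}_{e_i}\eta^r,e_i\rangle=-\langle\eta^r,\overline{\nabla}_{e_i}e_i\rangle$, so $\mathbf{H}_{M_2}$ is the \emph{standard} mean curvature vector (the normalized trace of the second fundamental form of $M_2$), and the classical restriction formula reads
\[
\Delta^{M_2}d=\sum_{i=1}^{n}D^2d(e_i,e_i)+n\langle\mathbf{H}_{M_2},Dd\rangle,
\]
with a \emph{plus} sign, not the minus sign you wrote. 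A quick sanity check: for $M_2=S^n_R\subset\mathbb{R}^{n+1}$ and the coordinate function $x_{n+1}$ (so $D^2x_{n+1}=0$), one has $\Delta^{S^n_R}x_{n+1}=-\frac{n}{R^2}x_{n+1}$ and $\mathbf{H}_{M_2}=-x/R^2$, consistent only with the plus sign. If instead your identity $\Delta^{M_2}d=\sum_iD^2d(e_i,e_i)-n\langle\mathbf{H}_{M_2},Dd\rangle$ were fed through your (otherwise correct) steps $D^2d(e_i,e_i)=D^2d(e_i^T,e_i^T)=-II_d(e_i^T,e_i^T)$ and $\mathrm{tr}A_d\mid_{T_xM_2^T}=\sum_iII_d(e_i^T,e_i^T)+\sum_{i,j}\upsigma_{ij}II_d(e_i^T,e_j^T)$, you would arrive at $\Delta^{M_2}d-\sum_{i,j}\upsigma_{ij}II_d(e_i^T,e_j^T)+n\langle\mathbf{H}_{M_2},Dd\rangle+\mathrm{tr}A_d\mid_{T_xM_2^T}=0$, which is not the statement; so the assertion that your first display "immediately accounts for the third summand" is exactly where the slip hides. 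The sign is not cosmetic here: Lemma \ref{lema2.2} rests on estimating precisely $-n\langle\mathbf{H}_{M_2},Dd\rangle+\mathrm{tr}A_d\mid_{T_xM_2^T}\geq 0$ by Cauchy--Schwarz, and the flipped sign would break that application.

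Once the sign is corrected, your argument closes and is in substance identical to the paper's: rather than citing the restriction formula, the paper derives it by hand, writing $\Delta^{M_2}d=\mathrm{div}\,Dd-\sum_r\langle Dd,\eta^r\rangle\,\mathrm{div}\,\eta^r=\sum_i\langle e_i^T,\overline{\nabla}_{e_i^T}\eta\rangle+n\langle\mathbf{H}_{M_2},Dd^{\perp}\rangle$ (using $\vert\eta\vert=1$ and $\overline{\nabla}_{\eta}\eta=0$ along the normal geodesics of $d$), and then carries out exactly your Gram-matrix step, inverting $g_{ij}=\delta_{ij}-\langle e_i,\eta\rangle\langle e_j,\eta\rangle$ to $g^{ij}=\delta_{ij}+\upsigma_{ij}$. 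You correctly identify that the ideal contact at infinity enters only to supply points with $d(x)\ll 1$, hence $\vert\nabla^{M_2}d\vert<1$, which is what makes the Gram matrix invertible and the projected space $T_xM_2^T$ genuinely $n$-dimensional.
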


\begin{proof}
Let $x \in M_{2}$ be such that $d(x)\ll1$. Such point exist because $M_{2}$ has an ideal contact at infinity with $M_{1}$. Then, if $\eta^{1}, \cdots,\eta^{k}$ form an orthonormal basis of $T_xM_2^{\perp}$ and $Dd=\eta$ is the Euclidian gradient of $d$, we have that
$$\nabla ^{M_2}d= Dd - \langle Dd,\eta ^1\rangle\eta ^1 - \cdots - \langle Dd,\eta ^k\rangle\eta ^k$$ and
\begin{equation}
\begin{array}{rlll}
\Delta ^{M_2}d & =& \textmd{div}\nabla ^{M_2}d&= \textmd{div}\; Dd - \displaystyle\sum_{r=1}^{k} \langle Dd,\eta ^r\rangle \textmd{div}\;\eta ^r\\
& & & = \textmd{div}\; Dd + n\langle{\bf H}_{M_2},Dd^{\perp}\rangle
\end{array}
\label{eq1}
\end{equation}
where $Dd^{\perp}=\displaystyle\sum _{r=1}^k\langle Dd,\eta ^r\rangle\eta ^r$ is the normal component of $\eta=Dd$ relative to $M_{2}$ and ${\bf H}_{M_2}=-\displaystyle\dfrac{1}{n}\sum_{r=1}^{k}(\textmd{div}\;\eta^r)\eta^r$ is the mean curvature vector of $M_{2}$.

Denote by $\overline{\nabla}_{e_{i}}$ the Euclidian directional derivative in the direction of $e_{i}$. Then, as 
\begin{center}
$e_i=e_i^T + \langle e_i,\eta\rangle\eta$\ \ and\ \ $\vert\eta\vert ^2=1,$
\end{center}
it follows from (\ref{eq1}) that
$$\begin{array}{rll}
\Delta ^{M_2}d &=&\displaystyle \sum_{i=1}^{n}\langle e_i, \overline{\nabla}_{e_i}\eta\rangle + n\langle {\bf H}_{M_2},Dd^{\perp}\rangle\\
&=& \displaystyle \sum_{i=1}^{n}\langle e_i^T, \overline{\nabla}_{e_i^T}\eta\rangle  + n\langle {\bf H}_{M_2},Dd^{\perp}\rangle\\
&=& -\displaystyle\sum_{i=1}^{n}II_d( e_i^T, e_i^T) + n\langle {\bf H}_{M_2},Dd^{\perp}\rangle
\end{array}$$
and as $Dd=\nabla^{M_2}d + Dd^{\perp}$, we have that
\begin{equation}
\Delta ^{M_2}d - n\langle {\bf H}_{M_2},Dd\rangle + \displaystyle\sum_{i=1}^{n}II_d( e_i^T, e_i^T)=0. \label{eq2}
\end{equation}



As $d(x)\ll 1$, because $M_2$ have a  {\it contact ideal at infinity} with $M_1$, we have that $\vert \nabla^{M_2}d\vert (x)< 1$. That is, $\vert \nabla^{M_2}d\vert (x)< 1$ if $dist(x,M_1)$ is smoll enough. Then, if we put

$$\begin{array}{rllll}
g_{ij}&:=&\langle e_i^T, e_j^T\rangle =\langle e_i - \langle e_i,\eta\rangle\eta,e_j - \langle e_j,\eta\rangle\eta\rangle\\
&=&\delta_{ij}-\langle e_i,\eta\rangle\langle e_j,\eta\rangle
\end{array}$$
and as $\vert\nabla ^{M_2}d\vert <1$, we have that $g^{ij}$, the inverse  of $g_{ij}$ is given by
$$g^{ij}=\delta _{ij} + \dfrac{\langle e_i,\eta\rangle\langle e_j,\eta\rangle}{1-\sum_{i=1}^{n}\langle e_i, \eta\rangle ^2}=:\delta _{ij}+\upsigma _{ij}.$$
And then, the trace of $A_d$ in $T_x{M_2}^T$ is given by

$$
\begin{array}{rll}
\mbox{tr}A_d\mid_{T_x{M_2}^T}&=&\displaystyle\sum_{i,j}^{n}g^{ij}\displaystyle II_d( e_i^T, e_j^T)\\
&=&\displaystyle\sum_{i=1}^{n}II_d( e_i^T, e_i^T) + \displaystyle\sum_{i,j}^{n}\upsigma _{ij}II_d( e_i^T, e_j^T).
\end{array}
$$
From (\ref{eq2}) we have

\begin{equation}
\Delta ^{M_2}d - n\langle{\bf H}_{M_2},Dd\rangle + \mbox{tr}A_d\mid_{T_x{M_2}^T} - \displaystyle\sum_{i,j}^{n}\upsigma _{ij}II_d( e_i^T, e_j^T)=0. \label{eq3}
\end{equation}
And the lemma goes on observing that $$\langle e_i,\eta\rangle = \langle e_i,Dd\rangle =\langle e_i,\nabla ^{M_2}d\rangle + \langle e_i,Dd^{\perp}\rangle =  \langle e_i,\nabla ^{M_2}d\rangle = \nabla _{e_i}^{M_2}d$$ and $$\nabla ^{M_2}d =\sum_{i=1}^{n}(\nabla _{e_i}^{M_2}d)e_i.$$ Then
$$\upsigma _{ij}= \displaystyle\dfrac{\langle e_i,\eta\rangle\langle e_j,\eta\rangle}{1-\sum_{i=1}^{n}\langle e_i, \eta\rangle ^2}= \dfrac{\nabla _{e_i}^{M_2}d\nabla _{e_j}^{M_2}d}{1-\vert\nabla^{M_2}d\vert ^2} .$$ 
\end{proof}




\begin{lema}\label{lema2.2}
Let $M_{1}$ and $M_{2}$ be like in Lemma \ref{lema2.1} and $d(x)=dist(x,M_{1})$. Suppose that $M_{2}$ has an ideal contact at infinity with $M_{1}$ and that
$$\sup _{M_2}\vert{\bf H}_{M_2}\vert \leq\inf _{M_1}\Uplambda _n.$$
Then we have that
$$\Delta ^{M_2}d -C_0 \vert\nabla ^{M_2}d\vert ^2\leq 0,$$
for some positive constant $C_0$.
\end{lema}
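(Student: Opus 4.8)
The plan is to start from the identity proved in Lemma~\ref{lema2.1}, written in the form
$$\Delta^{M_2}d = \sum_{i,j}^{n}\upsigma_{ij}II_d(e_i^T,e_j^T) + n\langle\mathbf{H}_{M_2},Dd\rangle - \mathrm{tr}A_d\mid_{T_xM_2^T},$$
and to show that the last two terms are jointly nonpositive while the first is dominated by a constant multiple of $|\nabla^{M_2}d|^2$. I work at a point $x\in M_2$ with $d(x)\ll1$, which exists because $M_2$ has an ideal contact at infinity with $M_1$; there $|\nabla^{M_2}d|(x)<1$, so the projection $v\mapsto v^T$ from $T_xM_2$ to $T_xM_{1d}$ is injective and $T_xM_2^T$ is genuinely $n$-dimensional.

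First I would dispose of the two curvature terms. Since $|Dd|=1$,
$$n\langle\mathbf{H}_{M_2},Dd\rangle \le n|\mathbf{H}_{M_2}| \le n\sup_{M_2}|\mathbf{H}_{M_2}| \le n\inf_{M_1}\Uplambda_n .$$
The eigenvalues of $A_d$ are the principal curvatures $\lambda_i(y_0)/(1-\lambda_i d(x_0))$ of the parallel hypersurface $M_{1d}$, and since $\lambda\mapsto\lambda/(1-\lambda d)$ is increasing for $d\ll1$ they keep the same ordering as the $\lambda_i$. Applying Lemma~\ref{lema1.1} to the $n$-dimensional subspace $W=T_xM_2^T$ gives
$$\mathrm{tr}A_d\mid_{T_xM_2^T} \ge \sum_{i=1}^{n}\frac{\lambda_i(y_0)}{1-\lambda_i d(x_0)} = n\Uplambda_n(x_0),$$
and the parallel-surface comparison recorded in the preliminaries yields $\Uplambda_n(x_0)\ge\Uplambda_n(y_0)\ge\inf_{M_1}\Uplambda_n$. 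Combining the two displays, the hypothesis $\sup_{M_2}|\mathbf{H}_{M_2}|\le\inf_{M_1}\Uplambda_n$ forces $n\langle\mathbf{H}_{M_2},Dd\rangle-\mathrm{tr}A_d\mid_{T_xM_2^T}\le 0$, so that $\Delta^{M_2}d \le \sum_{i,j}^{n}\upsigma_{ij}II_d(e_i^T,e_j^T)$.

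Next I would collapse the remaining sum into a single value of $II_d$. Using $\nabla^{M_2}_{e_i}d=\langle\nabla^{M_2}d,e_i\rangle$ together with $\nabla^{M_2}d=\sum_i(\nabla^{M_2}_{e_i}d)e_i$ and the bilinearity of $II_d$, I get
$$\sum_{i,j}^{n}\upsigma_{ij}II_d(e_i^T,e_j^T) = \frac{1}{1-|\nabla^{M_2}d|^2}\, II_d\!\left((\nabla^{M_2}d)^T,(\nabla^{M_2}d)^T\right),$$
where $(\nabla^{M_2}d)^T=\sum_i(\nabla^{M_2}_{e_i}d)e_i^T$ is the projection of $\nabla^{M_2}d$ onto $T_xM_{1d}$. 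Estimating $II_d(v,v)=\langle A_dv,v\rangle$ by the largest principal curvature of $M_{1d}$ and using $|(\nabla^{M_2}d)^T|\le|\nabla^{M_2}d|$, this leads to
$$\Delta^{M_2}d \le \frac{\|A_d\|}{1-|\nabla^{M_2}d|^2}\,|\nabla^{M_2}d|^2,$$
and it remains to absorb the prefactor into a positive constant.

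The main obstacle is exactly this last point: producing a single $C_0>0$, valid on the portion of $M_2$ with $d(x)\ll1$, such that $\|A_d\|/(1-|\nabla^{M_2}d|^2)\le C_0$. This needs two uniform bounds: that the principal curvatures $\lambda_i/(1-\lambda_i d)$ of the parallel hypersurfaces stay bounded (controlling $\|A_d\|$), and that $|\nabla^{M_2}d|$ stays bounded away from $1$ (so $1-|\nabla^{M_2}d|^2\ge c>0$). Both follow from the ideal contact at infinity, which forces $M_2$ to approach $M_1$ along the normal direction $\eta=\mathbf{H}_{M_1}/|\mathbf{H}_{M_1}|$: near such a contact the tangent planes of $M_2$ nearly coincide with the level sets of $d$, so $\nabla^{M_2}d$ is small and the relevant piece of $M_1$ over which the comparison runs has controlled geometry. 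Once these bounds are secured on $\{d\ll1\}$, taking $C_0:=\sup\{\|A_d\|/(1-|\nabla^{M_2}d|^2)\}$ over that region completes the proof.
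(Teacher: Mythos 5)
Your proposal follows essentially the same route as the paper's own proof: nonpositivity of $n\langle\mathbf{H}_{M_2},Dd\rangle-\mathrm{tr}A_d\mid_{T_xM_2^T}$ via Lemma \ref{lema1.1}, the parallel-hypersurface comparison, and the Cauchy--Schwarz inequality, followed by collapsing the $\upsigma_{ij}$-sum into $II_d\left((\nabla^{M_2}d)^T,(\nabla^{M_2}d)^T\right)/\left(1-\vert\nabla^{M_2}d\vert^2\right)$ and absorbing it into $C_0\vert\nabla^{M_2}d\vert^2$. On the one delicate point---the uniform constant $C_0$, which needs $\Vert A_d\Vert$ bounded and $\vert\nabla^{M_2}d\vert$ bounded away from $1$ near the contact region---you are actually more explicit than the paper, which asserts the bound in (\ref{eq6}) only ``because $M_{2}$ has an ideal contact at infinity with $M_{1}$,'' so your argument matches the paper's at its level of rigor.
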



\begin{proof}
First we will prove that
\begin{equation}
- n\langle {\bf H}_{M_2},Dd\rangle + \mbox{tr}A_d\mid_{T_xM_2^T}\geq 0. \label{eq4}
\end{equation}
By Lemma \ref{lema1.1} we have that
$$
\begin{array}{rll}
\dfrac{1}{n}\mbox{tr}A_d\mid_{T_xM_2^T}&\geq&\dfrac{1}{n}\left(\dfrac{\lambda _1(y)}{1-\lambda _1d(x)}+ \dfrac{\lambda _2(y)}{1-\lambda _2d(x)}+\cdots + \dfrac{\lambda _{n}(y)}{1-\lambda _{n}d(x)}\right)\\
&\geq& \dfrac{1}{n}(\lambda _1(y) + \cdots + \lambda _n(y))
\end{array}
$$
where $y \in M_{1}$ is such that $\vert x-y\vert = d(x)\ll 1$ and $\lambda _1(y), \cdots , \lambda _n(y)$ are the $n$ first principal curvatures  of $M_{1}$. As we supposed that
$$\sup _{M_2}\vert{\bf H}_{M_2}\vert \leq\inf _{M_1}\Uplambda _n, $$
we conclude that $\dfrac{1}{n}\mbox{tr}A_d\mid_{T_xM_2^T}\geq \vert{\bf H}_{M_2}\vert$ and by the Schwarz inequality, it follows that $$\langle {\bf H}_{M_2}, Dd\rangle \leq \vert{\bf H}_{M_2}\vert\vert Dd\vert = \vert{\bf H}_{M_2}\vert ,$$
since $Dd =\eta$. From this inequality we derive (\ref{eq4}).


For the sake of simplicity of notation denote $\nabla_{e_i}^{M_2}d=d_i$. Let $\upsigma_{ij}$ be given by Lemma \ref{lema2.1}, then
\begin{equation}
\begin{array}{rll}
\displaystyle\sum_{i,j}^{n}\upsigma _{ij}II_d( e_i^T, e_j^T)&=&\displaystyle\sum_{i,j}^{n}\dfrac{II_d(e_i^T, e_j^T)d_id_j}{1-\vert\nabla^{M_2}d\vert ^2}\\
 &=&\displaystyle\sum_{i,j}^{n}\dfrac{II_d( d_ie_i^T, d_je_j^T)}{1-\vert\nabla^{M_2}d\vert ^2}\\
 &=&\displaystyle\dfrac{II_d((\nabla ^{M_2}d)^T,(\nabla ^{M_2}d)^T)}{1-\vert\nabla^{M_2}d\vert ^2}\\
 &\leq& C_0\vert \nabla ^{M_2}d\vert ^2, \label{eq6}
\end{array}
\end{equation}
for a positive constant $C_{0}$. From $\vert \nabla ^{M_2}d\vert \ll 1$ we conclude the inequality of (\ref{eq6}), because $M_{2}$ has an ideal contact at infinity with $M_{1}$. Using now (\ref{eq3}), (\ref{eq4}) and (\ref{eq6}), we conclude the lemma.

\end{proof}

\subsection{Proof of the main theorem}
The demonstration of Theorem \ref{principio maximo} is similar to the demonstration in the case where we have $H$-surfaces in $\mathbb{R}^{3}$ given in Theorem 1 in \cite{RFL}. The difference between them lies in the demonstrations  of Lemmas \ref{lema2.1} and \ref{lema2.2} above, which equivalents in \cite{RFL} are the Lemmas 3 and 4, respectively. These will allow us to construct in a convenient set  a subharmonic function and assuming by absurd that (\ref{tese}) is not true we reach a contradiction related to the parabolicity of $M_{2}$.


\begin{proof}[\hypertarget{PMI}{Proof of Theorem} \ref{principio maximo}]
Let's suppose that (\ref{tese}) is false, i.e., 
$$m_0=\min\{dist(M_1,\partial M_2),dist(M_2,\partial M_1)\}>0.$$
For $\epsilon >0$ sufficient small, let  $$M_2(\epsilon)=\{x\in M_2 : dist(x,M_1)\leq \epsilon\}.$$ For each $x\in M_2(\epsilon)$, consider the set $$S_x=\{y\in M_1: \vert y - x\vert=dist(x,M_1) \ \ and \ \  x-y=\lambda{\bf H}_{M_1}(y), \ \ \lambda> 0\}$$ and finally define the set $M'_2(\epsilon)\subset M_2(\epsilon)$ as $$M'_2(\epsilon)= \{x\in M_2(\epsilon): S_x\neq \emptyset\}.$$ Note that $M'_2(\epsilon)$ is non-empty because $M_{1}$ and $M_{2}$ are propperly embedded in $\mathbb{R}^{n+1}$ and have an ideal contact at infinity. Let $C_2(\epsilon)\subset M'_2(\epsilon)$ be a conex component of $M'_2(\epsilon)$. Now take $\epsilon >0$ such that $m_o>\epsilon$. From that last assumption we have that $\partial M_2\cap C_2(\epsilon)=\emptyset$. In fact, if otherwise we had $x\in C_2(\epsilon)\subset M'_2(\epsilon)$ we would have $dist(x,M_1)\leq\epsilon$ and $x\in \partial M_2$ we would have $dist(x,M_1)>\epsilon$, because $dist(M_1,\partial M_2)>\epsilon$, which give us $$\partial C_2(\epsilon)= \{x\in C_2(\epsilon): dist(x,M_1)=\epsilon\}.$$ Consider now the distance function $d(x)=dist(x,M_1)$. By the lemma \ref{lema2.2} we have that


\begin{equation}
\Delta ^{M_2}d - C_0\vert\nabla ^{M_2}d\vert ^2\leq 0 \label{eq1teo}
\end{equation}
for a positive constant $C_{0}$. Observe that $d\mid _{\partial C_2(\epsilon)}\equiv\epsilon$. We also have that $C_2(\epsilon)$ is not compact, otherwise we would have a $x'$ in the interior of $C_{2}(\epsilon)$ such that $d(x')$ would be minimum, and in this case $\nabla ^{M_2}d(x')=0$ and by (\ref{eq1teo}) we would have  $\Delta ^{M_2}d(x')\leq 0$, contrary to the fact that $x'$ is a inferior minimum point. So $C_2(\epsilon)$ is not compact and $\sup_{C_2(\epsilon)}d=\epsilon$.

Consider now a function $\phi$ in $C_2(\epsilon)$ given by $$\phi (x)=e^{-C_0d(x)}.$$ Calculating $\Delta ^{M_2}\phi$ using (\ref{eq1teo}), we will have that $$\Delta ^{M_2}\phi = -C_0e^{-C_0d}(\Delta ^{M_2}d - C_0\vert\nabla ^{M_2}d\vert ^2)\geq 0$$  from which we can conclude that $\phi$ is subharmonic in $C_{2}(\epsilon)$. As we are assuming that $M_{2}$ is parabolic, we have by the Proposition \ref{prop1} that $C_2(\epsilon)$ is parabolic. So we should have $$\sup _{C_2(\epsilon)}\phi = \sup _{\partial C_2(\epsilon)}\phi = e^{-C_0 \epsilon}$$ which is a contradition because $\sup _{C_2(\epsilon)}\phi = 1 > e^{-C_0 \epsilon}=\sup _{\partial C_2(\epsilon)}\phi$. As this contradition came from the assumption that  $m_0>0$, we have that  $$\min\{dist(M_1,\partial M_2),dist(M_2,\partial M_1)\}=0$$ and the proof is complete.
 \end{proof}
 
In \cite{pigola-setti} Impera-Pigola-Setti they use the following definition of parabolic Riemannian manifol when $\partial M\neq \emptyset$ (see also \cite{impera-pigola-setti}).
 
 \begin{defi}\label{defi1.1}
 Let $M$ an oriented Riemannian manifold with smooth boundary $\partial M\neq \emptyset$  and exterior unit normal $\nu$. $M$ is said to be $\mathcal{N}$-parabolic if the only solutions to the problem
 \begin{equation}\label{neuman}
\left\{
  \begin{array}{lll}
  \Delta ^{M}h\geq 0 & em& M\\
  \dfrac{\partial h}{\partial \nu}\leq 0 & em& \partial M\\
  \sup _{M}h<+\infty
 \end{array}
  \right.
  \end{equation}
 is the constant function $h\equiv\sup _{M}h$.
 \end{defi}
 
 Getting the following proposition (Appendix A in \cite{pigola-setti}).
 
 \begin{prop}\label{Nparabol}
 Assume that $M$ is an $\mathcal{N}$-parabolic manifold with boundary
$\partial M \neq \emptyset$ and let $h$ be a solution of the problem
$$\left\{
  \begin{array}{lll}
  \Delta ^{M}h\geq 0 & em& M\\
  \sup _{M}h<+\infty .
 \end{array}
  \right.$$
  Then $$\sup _Mh=\sup _{\partial M}h.$$
 \end{prop}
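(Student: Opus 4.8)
The plan is to reduce the bounded subharmonic function $h$ to the Neumann problem (\ref{neuman}) of Definition \ref{defi1.1} by truncating $h$ from below at its boundary supremum. First I would set $c := \sup_{\partial M} h$, which is finite because $\partial M \subset M$ and $\sup_M h < +\infty$. Since $\partial M \subset M$ forces the trivial inequality $\sup_{\partial M} h \le \sup_M h$, it suffices to establish the reverse inequality $\sup_M h \le c$. To this end I would introduce
$$u := \max\{h, c\}.$$

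The key step is to verify that $u$ solves all three conditions of (\ref{neuman}). Being the maximum of the subharmonic function $h$ and the (trivially subharmonic) constant $c$, the function $u$ is subharmonic, $\Delta^M u \ge 0$; it is bounded above because $h$ is and $c$ is finite. On $\partial M$ one has $h \le c$ by the definition of $c$, so $u \equiv c$ on $\partial M$, while $u \ge c$ throughout $M$. Hence every boundary point realizes the least value of $u$ seen from the interior, and along the outward unit normal $\nu$ the function passes from values $\ge c$ inside to the value $c$ on $\partial M$, so it cannot increase; this gives $\partial u/\partial\nu \le 0$ on $\partial M$. Thus $u$ satisfies (\ref{neuman}).

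Finally I would invoke $\mathcal{N}$-parabolicity. By Definition \ref{defi1.1}, the only such $u$ is the constant $\sup_M u$. Since $u$ equals $c$ on the nonempty boundary $\partial M$, this constant must be $c$, so $u \equiv c$ on $M$. Unwinding $\max\{h,c\} \equiv c$ yields $h \le c$ everywhere, that is $\sup_M h \le c = \sup_{\partial M} h$, which together with the trivial inequality completes the proof.

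The step I expect to be the main obstacle is the regularity of $u = \max\{h,c\}$: it is in general only Lipschitz, not $C^2$, so both the subharmonicity and, above all, the boundary inequality $\partial u/\partial \nu \le 0$ must be interpreted in a weak (distributional, or viscosity) sense, and one must confirm that Definition \ref{defi1.1} is stable under this reading — or else replace $u$ by a smooth subharmonic approximant with the correct sign of normal derivative near $\partial M$. Checking that the maximum of two subharmonic functions is admissible in (\ref{neuman}), and that the outward normal derivative is genuinely nonpositive on the part of $\partial M$ where $h = c$, is where the care is required.
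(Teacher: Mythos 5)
Your truncation idea is the right one, and in fact it is essentially the argument behind the statement in the literature: note that the paper itself gives no proof of Proposition \ref{Nparabol} --- it quotes it from Appendix A of \cite{pigola-setti} --- and the proof there proceeds exactly by cutting a bounded subharmonic function off at a constant level and invoking $\mathcal{N}$-parabolicity. The one defect is the obstacle you flagged yourself and then left unresolved, and it is created entirely by your choice of level. Taking $c=\sup_{\partial M}h$ puts the set $\{p\in\partial M: h(p)=c\}$ in play: there $u=\max\{h,c\}$ is merely Lipschitz, $\Delta^M u\geq 0$ holds only weakly, and the inequality $\partial u/\partial\nu\leq 0$ only in a one-sided sense, so you must know that Definition \ref{defi1.1} admits this weak class --- which the paper's formulation of (\ref{neuman}) does not make explicit (in \cite{pigola-setti} it does, since $\mathcal{N}$-parabolicity is defined there for $C^0(M)\cap W^{1,2}_{loc}(M)$ functions satisfying a single integral inequality encoding both conditions). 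As written, your proof is therefore incomplete rather than wrong.

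The standard repair dissolves the difficulty instead of confronting it: argue by contradiction. If $\sup_M h>\sup_{\partial M}h$, choose $c$ strictly in between and set $u=\max\{h,c\}$. Then $\{h<c\}$ is an open set containing all of $\partial M$, so $u\equiv c$ on an open neighbourhood of $\partial M$; in particular $u$ is smooth near the boundary with $\partial u/\partial\nu=0$, the Neumann condition in (\ref{neuman}) holds classically, and the only weak ingredient left is the interior subharmonicity of the maximum of two subharmonic functions, which is classical. $\mathcal{N}$-parabolicity then forces
$$u\equiv\sup_M u=\max\Bigl\{\sup_M h,\,c\Bigr\}=\sup_M h>c,$$
contradicting $u=c$ on the nonempty boundary. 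This strict-intermediate-level version is the proof in \cite{pigola-setti}; with it, your closing worries about viscosity interpretations and smooth approximants become unnecessary.
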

 
 Proving thus that Definition \ref{defi1.1} implies the Definiton \ref{defi1} given by De Lima em \cite{RFL}. Naturally if $\partial M = \emptyset$, then the $\mathcal{N}$-parabolicity is equivalent the parabolicity.
 
 In \cite{A. Grigoryan,Grigoryan2} Grigor'yan proved the following theorem
 
 \begin{teo}\label{grigoryan}
 Let $M$ a complete Riamannian manifold. If for some point $o\in M$
 $$\dfrac{R}{Vol B_R^M(o)}\notin L^1(+\infty)$$
or
$$\dfrac{1}{Area(\partial _0 B_R^M(o))}\notin L^1(+\infty)$$
then $M$ is $\mathcal{N}$-parabolic.
 \end{teo}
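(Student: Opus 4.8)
The plan is to deduce $\mathcal{N}$-parabolicity from the vanishing of the capacity of geodesic balls, so that the two growth hypotheses enter only through the construction of good cut-off functions. Fix the base point $o$, write $\rho(x)=dist(x,o)$, $B_R=B_R^M(o)$, $V(R)=Vol\,B_R$ and $S(R)=Area(\partial_0 B_R)$, and for a precompact ball define the capacity to infinity
$$\mathrm{cap}(B_{R_0}):=\inf\Big\{\int_M|\nabla^M\phi|^2\,dV:\ \phi\in \mathrm{Lip}_c(M),\ 0\le\phi\le1,\ \phi\equiv1\ \text{on}\ B_{R_0}\Big\}.$$
I would use the basic fact from Grigor'yan's potential theory, in the form adapted to the Neumann problem of Definition \ref{defi1.1} and compatible with Proposition \ref{Nparabol}, that $M$ is $\mathcal{N}$-parabolic if and only if $\mathrm{cap}(B_{R_0})=0$ for one, equivalently every, $R_0>0$. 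Thus the whole theorem reduces to the estimate $\mathrm{cap}(B_{R_0})=0$ under either hypothesis, the reduction itself being cited from the references attached to the statement.

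For the area hypothesis I would use radial test functions. Taking $\phi=\psi(\rho)$ with $\psi(R_0)=1$, $\psi(R)=0$, and using $|\nabla^M\rho|=1$ almost everywhere together with the coarea formula, one gets
$$\int_M|\nabla^M\phi|^2\,dV=\int_{R_0}^{R}\psi'(r)^2\,S(r)\,dr.$$
Minimising the right-hand side over admissible $\psi$, whose Euler--Lagrange equation gives $\psi'S\equiv\text{const}$, yields
$$\mathrm{cap}(B_{R_0})\le\Big(\int_{R_0}^{R}\frac{dr}{S(r)}\Big)^{-1},$$
and letting $R\to+\infty$ the hypothesis $1/Area(\partial_0 B_R)\notin L^1(+\infty)$ forces the right-hand side to $0$.

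For the volume hypothesis the radial profile must be chosen to drop faster where the volume is large. On the dyadic annulus $A_k=\{2^kR_0\le\rho\le 2^{k+1}R_0\}$ let $\phi$ be affine in $\rho$ and decrease by an amount $\delta_k$, with $\sum_k\delta_k=1$; then $|\nabla^M\phi|\le \delta_k/(2^kR_0)$ on $A_k$, so that
$$\int_M|\nabla^M\phi|^2\,dV\le\sum_k \delta_k^2\,\frac{V(2^{k+1}R_0)}{(2^kR_0)^2}.$$
Optimising the weights $\delta_k$ under the constraint $\sum_k\delta_k=1$ gives
$$\mathrm{cap}(B_{R_0})\le\Big(\sum_k \frac{(2^kR_0)^2}{V(2^{k+1}R_0)}\Big)^{-1},$$
and a dyadic comparison shows this series is, up to a fixed constant and an index shift, comparable to $\int^{+\infty}r\,dr/V(r)$; hence $R/V(R)\notin L^1(+\infty)$ makes the series diverge and the capacity vanish.

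I expect the main obstacle to be this last comparison together with the regularity and boundary bookkeeping. Since $\rho$ is only Lipschitz across the cut locus, the identity $|\nabla^M\rho|=1$ and the coarea formula hold only almost everywhere and must be justified; and because $\partial M\neq\emptyset$ one has to work with the interior spherical part $\partial_0 B_R$ so that the radial cut-offs are admissible for the Neumann problem and do not reintroduce an uncontrolled boundary contribution. Converting the discrete optimisation into the integral test $\int^{+\infty}r\,dr/V(r)=+\infty$ is the genuinely quantitative step; once the capacity is shown to vanish, the cited equivalence with $\mathcal{N}$-parabolicity closes the argument.
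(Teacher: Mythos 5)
The paper gives no proof of Theorem \ref{grigoryan} at all---it is imported as a quoted result of Grigor'yan---and your capacity argument is precisely the standard proof in those cited references: reduce $\mathcal{N}$-parabolicity to the vanishing of the relative capacity (for the Neumann problem the test functions are unconstrained on $\partial M$, so the reduction is legitimate, and it appears in Grigor'yan and in Impera--Pigola--Setti), then annihilate the capacity with coarea-optimized radial cut-offs under the area hypothesis and with Cauchy--Schwarz-optimized dyadic drops $\delta_k$ under the volume hypothesis. The quantitative step you flagged does close: with $a=2^{k+1}R_0$ one has $\int_a^{2a} r\,dr/V(r)\le \bigl((2a)^2-a^2\bigr)/\bigl(2V(a)\bigr)=3a^2/\bigl(2V(a)\bigr)=6\,(2^kR_0)^2/V(2^{k+1}R_0)$, so divergence of $\int^{+\infty} r\,dr/V(r)$ forces divergence of $\sum_k (2^kR_0)^2/V(2^{k+1}R_0)$, and your proposal is correct as it stands, modulo the routine a.e.\ identification of the coarea level sets of the distance function with $\partial_0 B_r$, which you also correctly identified.
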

 
 Were, following the notation of \cite{pigola-setti,impera-pigola-setti} for a non-necessarily connected open set $\Omega\subseteq M$, we defined
 $$\partial _0\Omega=\partial \Omega\cap int M$$
and
$$\partial _1\Omega=\partial M\cap \Omega.$$
Which gives us 
$$\partial _0 B_R^M(o)=\partial B_R^M(o)\cap int M.$$
 
 The Theorem \ref{grigoryan} has as corollary the next result proved by Cheng and Yau, telling us that $(\mathbb{R}^2,\langle ,\rangle _{can})$ is an $\mathcal{N}$-parabolic Riemannian manifold, \cite{cheng-yau}.
 
 \begin{corol}[Cheng-Yau]
 Let $M$ a complete Riemannian manifold. If for some point $o\in M$ and for some sequence $R_k\to +\infty$
 $$VolB^M_{R_k}(o)\leq cte.R^2_k,$$
 then $M$ is $\mathcal{N}$-parabolic.
 
 \end{corol}
 
 
 Observin now that in the \hyperlink{PMI}{Proof} of Theorem \ref{principio maximo}, we have that $\partial _0C_2(\epsilon)=\partial C_2(\epsilon)$. Therefore, the Ahlfors Maximum Principles, Theorem 7 in \cite{pigola-setti} (see also \cite{impera-pigola-setti}), tells us that if $M_2$ is $\mathcal{N}$-parabolic in Theorem \ref{principio maximo}, that is, does not admit non constant function satisfying (\ref{neuman}), then the function $\phi (x)=e^{-C_0d(x)}$ that satisfies $\Delta ^{M_2}\phi\geq 0$ on $C_2(\epsilon)$ is such that
 $$\sup _{C_2(\epsilon)}\phi = \sup _{\partial _0 C_2(\epsilon)}\phi.$$
 Allowing us to reach the same contradiction \hyperlink{PMI}{Proof} of Theorem \ref{principio maximo}. Guaranteeing the validity of {\it Maximum Principles at Infinity} for $\mathcal{N}$-parabolic Riemannian manifold.
 
 In \cite{leandro}, see also Appendix A in \cite{pigola-setti}, Pessoa-Pigola-Setti they extend the notion of $\mathcal{N}$-parabolic Riemannian manifold with the next definition
 
 \begin{defi}
 We say that a Riemannian manifold $M$ with nonempty boundary $\partial M$ is $\mathcal{D}$-parabolic if every bounded function $h\in C^{\infty}(int\; M)\cap C^0(M)$ satisfiyng
 $$\left\{
  \begin{array}{lll}
  \Delta ^{M}h= 0 & em& int\;M\\
  h=0&em&\partial M,
 \end{array}
  \right.$$
  vanishes identically.
 \end{defi}
 
 Not that of Proposition \ref{Nparabol} every $\mathcal{N}$-parabolic Riemannian manifold is $\mathcal{D}$-parabolic, but the converse is not true, see Example 4 of \cite{leandro}.
 
 When $M$ is a $\mathcal{D}$-parabolic Riemannian manifold we have the following proposition.
 
 \begin{prop}[Proposition 10 of \cite{leandro}]\label{Dparabolica}
 Let $M$ be a manifold with boundary $\partial M$. Then the following are equivalent:
 \begin{enumerate}
 \item $M$ is $\mathcal{D}$-parabolic;
 \item For every domain $\Omega\subset M$ and every bounded function $h\in C^{\infty}(int\; M)\cap C^0(M)$ satisfying $\Delta ^{M}h\geq 0$ on $int\Omega$ we have
 $$\sup _{\Omega}h=\sup _{\partial \Omega}h;$$
 \item For every bounded function $h$ satisfying $\Delta ^{M}h\geq 0$ on $int\;M$ we have 
 $$\sup _Mh=\sup _{\partial M}h.$$
 \end{enumerate}
 \end{prop}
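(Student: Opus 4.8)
The plan is to establish the cycle $(1)\Rightarrow(3)\Rightarrow(2)\Rightarrow(1)$, arranged so that the analytic work is concentrated in a single implication while the other two are essentially formal. The implication $(2)\Rightarrow(1)$ is immediate: if $h$ is bounded and harmonic on $M$ with $h\equiv 0$ on $\partial M$, then applying (2) to the domain $\Omega=M$ (one connected component at a time, should $M$ be disconnected) gives $\sup_M h=\sup_{\partial M}h=0$, and the same applied to $-h$ gives $\inf_M h=0$; hence $h\equiv 0$ and $M$ is $\mathcal{D}$-parabolic in the sense of the preceding definition.

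For $(3)\Rightarrow(2)$ I would use a pasting (gluing) argument. Given a domain $\Omega\subset M$ and a bounded $h\in C^{\infty}(\mathrm{int}\,M)\cap C^0(M)$ with $\Delta^M h\geq 0$ on $\mathrm{int}\,\Omega$, set $s=\sup_{\partial\Omega}h$ and define $H=\max(h,s)$ on $\mathrm{int}\,\Omega$ and $H=s$ on $M\setminus\mathrm{int}\,\Omega$. Since $h\leq s$ on $\partial_0\Omega=\partial\Omega\cap\mathrm{int}\,M$, the two pieces agree continuously there, and the classical pasting lemma for subharmonic functions (with the constant $s$, which is harmonic, as the outer piece) shows that $H$ is subharmonic on $\mathrm{int}\,M$, bounded, and continuous on $M$. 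Because $\mathrm{int}\,\Omega\subset\mathrm{int}\,M$ we have $\partial M\subset M\setminus\mathrm{int}\,\Omega$, so $H\equiv s$ on $\partial M$; applying (3) then yields $\sup_M H=\sup_{\partial M}H=s$, and since $\sup_M H=\sup_\Omega h$ this gives exactly $\sup_\Omega h=s=\sup_{\partial\Omega}h$, which is (2).

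The substantive step is $(1)\Rightarrow(3)$, where one must upgrade the hypothesis on \emph{harmonic} functions to a maximum principle for \emph{subharmonic} functions. Let $h$ be bounded subharmonic, put $s=\sup_{\partial M}h$, and suppose by contradiction that $\sup_M h>s$. Replacing $h$ by $(h-s)^{+}$ produces a bounded, nonnegative, subharmonic function $u$ vanishing on $\partial M$ but not identically zero. I would exhaust $M$ by smooth, relatively compact domains $D_1\Subset D_2\Subset\cdots$ and solve on each the Dirichlet problem $\Delta^M v_j=0$ in $\mathrm{int}\,D_j$ with $v_j=u$ on $\partial D_j$ (note $u=0$ on $D_j\cap\partial M$). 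The maximum principle gives $0\leq u\leq v_j\leq\sup_M u$ and the monotonicity $v_j\leq v_{j+1}$ on $D_j$, so the $v_j$ increase to a bounded harmonic function $v$ on $\mathrm{int}\,M$ with $v\geq u$, hence $v\not\equiv 0$. Since each $v_j$ vanishes on $D_j\cap\partial M$, a barrier argument at the smooth boundary forces $v$ to extend continuously to $\partial M$ with $v\equiv 0$ there, producing a bounded, nonzero, harmonic function vanishing on $\partial M$ — contradicting $\mathcal{D}$-parabolicity. I expect the boundary regularity to be the main obstacle: constructing the mixed Dirichlet solutions $v_j$, and above all verifying that the monotone limit $v$ still attains its zero boundary values continuously, is where the argument is delicate, and it is precisely the smoothness of $\partial M$ that makes the needed barriers available.
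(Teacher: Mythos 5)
The paper does not prove this proposition: it is imported verbatim as Proposition 10 of \cite{leandro} and used as a black box, so there is no in-paper argument to compare yours against --- your proposal in effect supplies the omitted proof. Judged on its own terms it is correct, and it takes the route one would expect: the only substantive implication is $(1)\Rightarrow(3)$, handled by harmonic replacement on an exhaustion, while $(3)\Rightarrow(2)$ is a gluing and $(2)\Rightarrow(1)$ is formal. Your $(1)\Rightarrow(3)$ is sound: $u=(h-s)^{+}$ is bounded, subharmonic, vanishes on $\partial M$ and is not identically zero; the mixed problems for $v_j$ (data $u$ on $\partial_0 D_j$, $0$ on $D_j\cap\partial M$) are solvable by Perron, with boundary regularity at the smooth boundary and at transversally chosen corners (exterior cone condition); comparison gives $0\le u\le v_j\nearrow v$, Harnack's monotone convergence theorem makes $v$ harmonic and bounded, and --- the point you correctly isolate as delicate --- the barrier at $y\in\partial M$ works \emph{uniformly} in $j$ because every $v_j$ vanishes on $D_j\cap\partial M$ and $0\le v_j\le\sup_M u$, so once the barrier's half-ball is contained in $D_j$ the comparison applies to all later $v_j$; hence $v\in C^0(M)$ with $v\mid_{\partial M}=0$ and $v\ge u\not\equiv 0$, contradicting $\mathcal{D}$-parabolicity.

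Two conventions should be made explicit, since your easy implications silently depend on them. First, in $(2)\Rightarrow(1)$ with $\Omega=M$, and equally for the continuity of your glued $H$ across $\Omega\cap\partial M$, you need $\partial\Omega$ to mean the boundary of $\Omega$ as a manifold with boundary, i.e.\ $\partial_0\Omega\cup\partial_1\Omega$ in the paper's notation: with the purely topological boundary of $\Omega$ in $M$ one would get $\partial\Omega=\emptyset$ for $\Omega=M$, and worse, $s=\sup_{\partial\Omega}h$ would not dominate $h$ on $\Omega\cap\partial M$, so $H$ could jump there (indeed statement (2) itself is false under that reading, as a half-disk in a half-plane shows). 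Second, $H=\max(h,s)$ is merely continuous and subharmonic in the distributional sense, not $C^{\infty}$; so $(3)$ must be read in the continuous subharmonic class --- consistent with how $(3)$ is printed here (no smoothness is imposed) and with \cite{leandro}, but one cannot retreat to the smooth class by mollification, since smoothing need not preserve subharmonicity on a manifold. With these two readings fixed, your cycle is complete and correct.
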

 
 Now with the Proposition \ref{Dparabolica} and with previous discussion we can suppose in Theorem \ref{principio maximo} $M_2$ an $\mathcal{N}$-parabolic or $\mathcal{D}$-parabolic hypersurface and guarantee the validity of {\it Maximum Principles at Infinity}, namely we have the next theorem
 
 \begin{teo}\label{Nprincipio maximo}
 Let $M_1$ and $M_2$ two propperly embedded and disjoints hypersurfaces in $\mathbb{R}^{n+1}$ with nonempty boundaries $\partial M_1$ and $\partial M_2$ . Suppose that $M_2$ it is complete and that
 \begin{equation}
 \sup _{M_2}\vert\mathbf{H}_{M_2}\vert \leq b_0\leq\inf _{M_1}H_{M_1}\label{hip1},\ \ b_0>0.
 \end{equation}
 If $M_2$ have an ideal contac at infinity with $M_1$ and $M_2$ is $\mathcal{N}$-parabolic (or $\mathcal{D}$-parabolic) then
 \begin{equation}
 \min\{dist(M_1,\partial M_2),dist(M_2,\partial M_1)\}=0.
 \end{equation}
 \end{teo}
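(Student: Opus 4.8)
The plan is to run the argument of Theorem \ref{principio maximo} verbatim up to the construction of the auxiliary subharmonic function, and then to replace the single appeal to classical parabolicity (Definition \ref{defi1} together with Proposition \ref{prop1}) by the $\mathcal{N}$- or $\mathcal{D}$-parabolic maximum principle recorded just above. Arguing by contradiction, I would assume
$$m_0=\min\{dist(M_1,\partial M_2),dist(M_2,\partial M_1)\}>0,$$
fix $0<\epsilon<m_0$, and form exactly the same sets $M_2(\epsilon)$, $S_x$, $M'_2(\epsilon)$ and a connected component $C_2(\epsilon)\subset M'_2(\epsilon)$. The ideal contact at infinity guarantees $M'_2(\epsilon)\neq\emptyset$, and the choice $\epsilon<m_0$ forces $\partial M_2\cap C_2(\epsilon)=\emptyset$, so that
$$\partial C_2(\epsilon)=\{x\in C_2(\epsilon):d(x)=\epsilon\}\subset \mathrm{int}\,M_2.$$
The crucial structural observation, already noted before the statement, is therefore $\partial_0 C_2(\epsilon)=\partial C_2(\epsilon)$ and $\partial_1 C_2(\epsilon)=\emptyset$.

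First I would invoke Lemma \ref{lema2.2}, whose hypotheses are exactly (\ref{hip1}) and the ideal contact at infinity, to obtain the differential inequality $\Delta^{M_2}d-C_0|\nabla^{M_2}d|^2\le 0$ on $C_2(\epsilon)$ for some $C_0>0$. Setting $\phi(x)=e^{-C_0 d(x)}$, a direct computation gives
$$\Delta^{M_2}\phi=-C_0 e^{-C_0 d}\bigl(\Delta^{M_2}d-C_0|\nabla^{M_2}d|^2\bigr)\ge 0,$$
so $\phi$ is a bounded, $C^2$, subharmonic function on the domain $C_2(\epsilon)$, with $0<\phi\le 1$ and $\phi|_{\partial C_2(\epsilon)}\equiv e^{-C_0\epsilon}$.

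The point where the two parabolicity hypotheses enter is the same in both cases: one needs the maximum-principle conclusion $\sup_{C_2(\epsilon)}\phi=\sup_{\partial C_2(\epsilon)}\phi$. If $M_2$ is $\mathcal{N}$-parabolic, I would apply the Ahlfors maximum principle (Theorem 7 of \cite{pigola-setti}) to the bounded subharmonic $\phi$ on the domain $C_2(\epsilon)$; since $\partial_1 C_2(\epsilon)=\emptyset$, that principle yields $\sup_{C_2(\epsilon)}\phi=\sup_{\partial_0 C_2(\epsilon)}\phi=\sup_{\partial C_2(\epsilon)}\phi$. If instead $M_2$ is $\mathcal{D}$-parabolic, the same equality is delivered directly by item (2) of Proposition \ref{Dparabolica} applied with $\Omega=C_2(\epsilon)$. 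Either way,
$$\sup_{C_2(\epsilon)}\phi=\sup_{\partial C_2(\epsilon)}\phi=e^{-C_0\epsilon}.$$

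Finally I would extract the contradiction exactly as in Theorem \ref{principio maximo}: the ideal contact at infinity produces points of $C_2(\epsilon)$ at which $d$ is arbitrarily small, so $\sup_{C_2(\epsilon)}\phi=1$, whereas the displayed equality forces $\sup_{C_2(\epsilon)}\phi=e^{-C_0\epsilon}<1$. This contradiction refutes $m_0>0$ and proves (\ref{tese}). I expect the only genuine subtlety to be the transfer of the maximum principle from the ambient $\mathcal{N}$- (resp.\ $\mathcal{D}$-) parabolic manifold $M_2$ to the non-compact domain $C_2(\epsilon)$: one must be certain that the whole topological boundary of $C_2(\epsilon)$ lies in the interior of $M_2$, so that no Neumann (resp.\ Dirichlet) data are lost along $\partial M_2$. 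This is precisely why the identity $\partial_0 C_2(\epsilon)=\partial C_2(\epsilon)$ is the load-bearing step, and the reason the present hypotheses can replace the submanifold parabolicity used via Proposition \ref{prop1} in the original proof.
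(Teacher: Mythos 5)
Your proposal is correct and takes essentially the same route as the paper: the paper establishes Theorem \ref{Nprincipio maximo} precisely by rerunning the proof of Theorem \ref{principio maximo} with $\phi=e^{-C_0d}$ on $C_2(\epsilon)$ and replacing the appeal to Proposition \ref{prop1} by the Ahlfors maximum principle in the $\mathcal{N}$-parabolic case and by Proposition \ref{Dparabolica} in the $\mathcal{D}$-parabolic case. Your identification of $\partial_0C_2(\epsilon)=\partial C_2(\epsilon)$ as the load-bearing step is exactly the observation the paper makes before stating the theorem.
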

 
 For $\mathcal{D}$-parabolic hypersurfaces we have the following corollary of Theorem \ref{Nprincipio maximo}.
 
 \begin{corol}
 Let $M_1$ and $M_2$ has in Theorem \ref{Nprincipio maximo}. Assume that $M_2$ it is complete and 
 \begin{equation}
 \sup _{M_2}\vert\mathbf{H}_{M_2}\vert \leq b_0\leq\inf _{M_1}H_{M_1},\ \ b_0>0.
 \end{equation}
 Suppose that there exist relatively compact sets $\Omega _1\subset M_1$ and $\Omega _2\subset M_2$ such that $M_1\diagdown\Omega _1$ is isometric to $M_2\diagdown\Omega _2$. If $M_2$ have an ideal contact at infinity with $M_1$ and $M_1$ or $M_2$ is $\mathcal{D}$-parabolic then
 \begin{equation}
 \min\{dist(M_1,\partial M_2),dist(M_2,\partial M_1)\}=0.
 \end{equation}
 \end{corol}
 \begin{proof}
 By Corollary 13 of \cite{leandro} we have that if $M_1\diagdown\Omega _1$ is isometric to $M_2\diagdown\Omega _2$, then $M_1$ is $\mathcal{D}$-parabolic if and only if so is $M_2$. Therefore, if $M_2$ is $\mathcal{D}$-parabolic by Theorem \ref{Nprincipio maximo} the lemma is true. If $M_1$ is $\mathcal{D}$-parabolic then $M_2$ so is, and again the lemma is true.
 \end{proof}
 
 
\begin{obs}
Since the Proposition \ref{prop1} and the Ahlfors Maximum Principles, for $\mathcal{N}$-parabolicity, is valid when $\partial M_2=\emptyset$ we have that the Theorems \ref{principio maximo} and \ref{Nprincipio maximo} remains valid if $\partial M_2=\emptyset$, in this case we have that (\ref{tese}) remain $dist(M_2,\partial M_1)=0$.
\end{obs}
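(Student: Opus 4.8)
The plan is to re-run the contradiction argument of the \hyperlink{PMI}{Proof} of Theorem \ref{principio maximo} almost verbatim, replacing its single appeal to ordinary parabolicity (via Proposition \ref{prop1} and Definition \ref{defi1}) by the corresponding maximum principle for $\mathcal{N}$-parabolic, respectively $\mathcal{D}$-parabolic, manifolds. First I would assume, toward a contradiction, that
$$m_0=\min\{dist(M_1,\partial M_2),dist(M_2,\partial M_1)\}>0,$$
choose $\epsilon>0$ with $\epsilon<m_0$, and construct $M_2(\epsilon)$, $M_2'(\epsilon)$ and a connected component $C_2(\epsilon)$ exactly as in that proof. The ideal contact at infinity again makes $M_2'(\epsilon)\neq\emptyset$, and $\epsilon<m_0\leq dist(M_1,\partial M_2)$ forces $\partial M_2\cap C_2(\epsilon)=\emptyset$, so that $d\equiv\epsilon$ on $\partial C_2(\epsilon)=\{x\in C_2(\epsilon):dist(x,M_1)=\epsilon\}$. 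Since $M_1\subset\mathbb{R}^{n+1}$ is a hypersurface we have $\Uplambda_n=H_{M_1}$, hence (\ref{hip1}) gives $\sup_{M_2}|\mathbf{H}_{M_2}|\leq\inf_{M_1}\Uplambda_n$ and Lemma \ref{lema2.2} produces a constant $C_0>0$ with $\Delta^{M_2}d-C_0|\nabla^{M_2}d|^2\leq0$ on $C_2(\epsilon)$. Consequently $\phi(x)=e^{-C_0 d(x)}$ is bounded and satisfies $\Delta^{M_2}\phi\geq0$, and, exactly as before, $C_2(\epsilon)$ is non-compact with $\sup_{C_2(\epsilon)}\phi=1$.

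The one structural point to record is that, since $\partial M_2\cap C_2(\epsilon)=\emptyset$, the boundary of $C_2(\epsilon)$ lies in the interior of $M_2$; in the notation of \cite{pigola-setti} this means $\partial_1 C_2(\epsilon)=\emptyset$ and therefore $\partial_0 C_2(\epsilon)=\partial C_2(\epsilon)$. If $M_2$ is $\mathcal{N}$-parabolic I would apply the Ahlfors Maximum Principle (Theorem 7 of \cite{pigola-setti}; see also Definition \ref{defi1.1} and Proposition \ref{Nparabol}) to the bounded subharmonic $\phi$ on $C_2(\epsilon)$, obtaining
$$\sup_{C_2(\epsilon)}\phi=\sup_{\partial_0 C_2(\epsilon)}\phi=\sup_{\partial C_2(\epsilon)}\phi=e^{-C_0\epsilon}.$$
If instead $M_2$ is $\mathcal{D}$-parabolic I would invoke item (2) of Proposition \ref{Dparabolica} with $\Omega=C_2(\epsilon)$, which yields the same equality $\sup_{C_2(\epsilon)}\phi=\sup_{\partial C_2(\epsilon)}\phi=e^{-C_0\epsilon}$. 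In either case this contradicts $\sup_{C_2(\epsilon)}\phi=1>e^{-C_0\epsilon}$, and the contradiction forces $m_0=0$, which is the assertion.

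The step demanding the most care — and the only real departure from the proof of Theorem \ref{principio maximo} — is descending the parabolicity hypothesis on $M_2$ to the non-compact subdomain $C_2(\epsilon)$. For ordinary parabolicity this was handled by Proposition \ref{prop1}; here one needs instead to be sure the ambient maximum principle reads correctly over $C_2(\epsilon)$ once its boundary is split into $\partial_0$ and $\partial_1$. This is exactly why the identity $\partial_0 C_2(\epsilon)=\partial C_2(\epsilon)$ is indispensable: it guarantees that no spurious contribution from $\partial M_2$ enters the Ahlfors principle, and it lets Proposition \ref{Dparabolica}(2), stated for arbitrary domains $\Omega\subset M_2$, apply directly without any separate descent lemma. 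Once this boundary bookkeeping is settled, the remainder is the identical subharmonic-barrier contradiction, so that the \emph{substance} of the statement lies in the correct localization of the $\mathcal{N}$- or $\mathcal{D}$-parabolic maximum principle to $C_2(\epsilon)$ rather than in any new geometric estimate.
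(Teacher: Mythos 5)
Your proposal, as written, is a correct re-run of the subharmonic-barrier argument, but it proves the wrong statement: it reproduces the proof of Theorem \ref{Nprincipio maximo}, which is already established in the paper, and never engages the actual content of the remark, namely the case $\partial M_2=\emptyset$. Nowhere do you assume that $M_2$ has empty boundary; on the contrary, your setup presupposes $\partial M_2\neq\emptyset$, since you start from $m_0=\min\{dist(M_1,\partial M_2),dist(M_2,\partial M_1)\}>0$ and you derive $\partial M_2\cap C_2(\epsilon)=\emptyset$ from the inequality $\epsilon<m_0\leq dist(M_1,\partial M_2)$. In the situation of the remark that intersection is empty for the trivial reason that $\partial M_2$ does not exist, and the quantity $dist(M_1,\partial M_2)$ drops out of the statement altogether; a proof of the remark must instead start from the contradiction hypothesis $dist(M_2,\partial M_1)>0$ (the only distance that survives, which is exactly why the conclusion (\ref{tese}) specializes to $dist(M_2,\partial M_1)=0$), observe that $\partial C_2(\epsilon)=\{x\in C_2(\epsilon):dist(x,M_1)=\epsilon\}$ now holds automatically, and then check that the two parabolicity tools remain available in the boundaryless setting. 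Concretely: for Theorem \ref{principio maximo}, parabolicity of the boundaryless $M_2$ (no nonconstant bounded-above subharmonic function) still descends to the component $C_2(\epsilon)$, which does have boundary, because Proposition \ref{prop1} is stated for an ambient manifold with possibly empty boundary --- this step covers the plain parabolic case and is entirely absent from your plan, which announces that it will \emph{replace} the appeal to ordinary parabolicity by the $\mathcal{N}$-parabolic principle; for Theorem \ref{Nprincipio maximo}, one uses that $\mathcal{N}$-parabolicity of a boundaryless manifold is equivalent to ordinary parabolicity (as the paper notes after Proposition \ref{Nparabol}) and that the Ahlfors principle applies on $C_2(\epsilon)$ with $\partial_0 C_2(\epsilon)=\partial C_2(\epsilon)$ --- the one piece of bookkeeping you did record correctly, though anchored to the wrong hypotheses.

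A secondary point: you also run a $\mathcal{D}$-parabolic case through Proposition \ref{Dparabolica}(2), but the remark deliberately claims only the parabolic and $\mathcal{N}$-parabolic extensions when $\partial M_2=\emptyset$, and for good reason: the definition of $\mathcal{D}$-parabolicity imposes a Dirichlet condition on $\partial M$ that becomes vacuous when the boundary is empty (nonzero bounded constants are then harmonic, so no connected boundaryless manifold would satisfy it under a naive reading). So, read as a proof of the remark, your proposal both omits the required boundaryless adaptations and claims more than the cited machinery supports.
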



\begin{obs}
If we suppose that $M_{2}$ is a compact hypersurface, we can withdraw the hypothesis of parabolicity in Theorem \ref{principio maximo}, using now the Divergence Theorem in its demonstration.
\end{obs}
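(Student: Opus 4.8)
The plan is to observe that a compact Riemannian manifold with boundary is automatically parabolic in the sense of Definition \ref{defi1}, so that once $M_2$ is compact the parabolicity assumption in Theorem \ref{principio maximo} becomes superfluous; the device replacing the abstract parabolicity is the Divergence Theorem. Concretely, I would repeat verbatim the construction in the proof of Theorem \ref{principio maximo}: assuming for contradiction that $m_0=\min\{dist(M_1,\partial M_2),dist(M_2,\partial M_1)\}>0$, I form the connected component $C_2(\epsilon)$ and the function $\phi=e^{-C_0 d}$, where $d(x)=dist(x,M_1)$ and $C_0>0$ is the constant furnished by Lemma \ref{lema2.2}. Nothing in that construction uses parabolicity, so I still have $\Delta^{M_2}\phi\geq 0$ on $C_2(\epsilon)$ together with $\partial C_2(\epsilon)=\{x\in C_2(\epsilon):d(x)=\epsilon\}$. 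The only new feature is that $C_2(\epsilon)$ is now compact, which allows me to integrate directly instead of invoking Definition \ref{defi1}.

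First I would apply the Divergence Theorem to the field $\nabla^{M_2}\phi$ on the compact domain $C_2(\epsilon)$, obtaining
\begin{equation*}
0\leq\int_{C_2(\epsilon)}\Delta^{M_2}\phi\,dM_2=\int_{\partial C_2(\epsilon)}\langle\nabla^{M_2}\phi,\nu\rangle\,dA,
\end{equation*}
where $\nu$ denotes the outward unit conormal along $\partial C_2(\epsilon)$. Since $d$ attains its maximum value $\epsilon$ on $\partial C_2(\epsilon)$, one has $\langle\nabla^{M_2}d,\nu\rangle\geq 0$ there, and hence $\langle\nabla^{M_2}\phi,\nu\rangle=-C_0 e^{-C_0 d}\langle\nabla^{M_2}d,\nu\rangle\leq 0$. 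Thus the boundary integral is nonpositive while the interior integral is nonnegative, forcing both to vanish: $\Delta^{M_2}\phi\equiv 0$ on $C_2(\epsilon)$ and $\langle\nabla^{M_2}\phi,\nu\rangle\equiv 0$ on $\partial C_2(\epsilon)$. Feeding these back into Green's first identity yields $\int_{C_2(\epsilon)}|\nabla^{M_2}\phi|^2\,dM_2=0$, so $\phi$, and therefore $d$, is constant on the connected set $C_2(\epsilon)$. This contradicts the fact that $d\equiv\epsilon$ on $\partial C_2(\epsilon)$ whereas $d$ takes values strictly smaller than $\epsilon$ in the interior, the latter being guaranteed by the ideal contact at infinity of $M_2$ with $M_1$. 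Hence $m_0=0$, which is the assertion of Theorem \ref{principio maximo} without the parabolicity hypothesis.

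I expect the main obstacle to be purely technical, namely ensuring that the level set $\partial C_2(\epsilon)=d^{-1}(\epsilon)\cap C_2(\epsilon)$ is regular enough for the Divergence Theorem. Since $d$ is only $C^2$ in a neighbourhood of $M_1$, as recorded in the Preliminaries, and its regular level sets are $C^2$ hypersurfaces, I would select $\epsilon$ to be a regular value of $d\mid_{M_2}$, which is admissible for almost every small $\epsilon$ by Sard's theorem, so that $\partial C_2(\epsilon)$ is a $C^2$ hypersurface of $M_2$ and $\phi\in C^2(\overline{C_2(\epsilon)})$. Once this regularity is secured the sign of the conormal derivative and the two integrations above are immediate, and the same computation shows in particular that any subharmonic function on the compact $C_2(\epsilon)$ attains its supremum on the boundary; this is precisely the parabolicity of Definition \ref{defi1}, confirming that the hypothesis may indeed be withdrawn when $M_2$ is compact.
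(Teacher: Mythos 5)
Your proposal is correct and follows exactly the route the remark prescribes: you rerun the proof of Theorem \ref{principio maximo} verbatim up to the construction of the subharmonic function $\phi=e^{-C_0 d}$ on $C_2(\epsilon)$, and then, since compactness of $M_2$ makes $C_2(\epsilon)$ compact, you replace the appeal to parabolicity by the Divergence Theorem together with Green's first identity, which forces $\phi$ (hence $d$) to be constant and yields the same contradiction with $d\equiv\epsilon$ on $\partial C_2(\epsilon)$. Your extra care in choosing $\epsilon$ to be a regular value of $d\mid_{M_2}$ via Sard's theorem, and your closing observation that the same computation shows any compact manifold with boundary is parabolic in the sense of Definition \ref{defi1}, are consistent with (indeed slightly more careful than) the paper's intended demonstration.
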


\section{Geometric Applications}


Let $M\subset\mathbb{R}^{n+1}$ be a complete hypersurface, propperly embedded and empty boundary. We say that $M$ is convex with respect to the unit normal $\eta$ if its mean curvature function is positive. Observe that $M$ splits $\mathbb{R}^{n+1}$ in two connected component. We will define as \textit{convex side} of $M$ the component of $\mathbb{R}^{n+1}$ to which the mean curvature vector points at.


\subsection{Applications of the Maximum Principles at Infinty}

As a consequence of the \hyperlink{PMI}{proof} of the Theorem \ref{principio maximo} we have the following theorem that extends to $\mathbb{R}^{n+1}$, without assumptions about the Gaussian curvature, the Corollary 1 in \cite{RFL} and the Theorem 3.4 in \cite{R-M}, with a similar demonstration. It also extends, in the parabolic case, the Theorem 1 in \cite{ros-rosenberg}.
 

\begin{teo}\label{convexo}
Let $M_{1}$ and $M_{2}$ be two propperly embedded and disjoints hypersurfaces in $\mathbb{R}^{n+1}$ with empty boundaries. Suppose that $M_{2}$ is complete and that
  $$ \sup _{M_2}\vert{\bf H}_{M_2}\vert \leq b_0\leq
  \inf _{M_1}H_{M_1}, \; b_0>0. $$
  If $M_{2}$ is parabolic, it cannot lie in the convex side of $M_{1}$.
\end{teo}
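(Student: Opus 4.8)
The plan is to argue by contradiction: suppose $M_2$ lies in the convex side of $M_1$. The first observation is that the convex-side hypothesis supplies, for free, the geometric relation that drives Lemmas~\ref{lema2.1} and~\ref{lema2.2}. Indeed, writing $d(x)=dist(x,M_1)$ and letting $y\in M_1$ be a foot point with $|x-y|=d(x)$, one has $x-y=d(x)\eta(y)$; since $M_2$ sits on the side into which $\mathbf H_{M_1}=H_{M_1}\eta$ points and $H_{M_1}>0$, this reads $x-y=\lambda\,\mathbf H_{M_1}(y)$ with $\lambda=d(x)/H_{M_1}(y)>0$. Thus the role played by the \emph{ideal contact at infinity} in those lemmas is now played by convexity, and wherever $d$ is smooth we obtain $\Delta^{M_2}d-C_0|\nabla^{M_2}d|^2\le 0$, so that $\phi=e^{-C_0 d}$ is subharmonic there. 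I set $m_0=dist(M_1,M_2)=\inf_{M_2}d\ge 0$ (positive distance cannot be ruled out a priori, since $\partial M_1=\partial M_2=\emptyset$) and split according to whether this infimum is attained.

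If $m_0$ is attained at an interior point $x_0\in M_2$, then disjointness forces $m_0>0$, and the contradiction is immediate and does not even use parabolicity. At $x_0$ one has $\nabla^{M_2}d=0$, hence $\eta(x_0)\perp T_{x_0}M_2$, so that $e_i^T=e_i$ and $T_{x_0}M_2=T_{x_0}M_{1m_0}$, where $M_{1m_0}$ is the parallel hypersurface at distance $m_0$. Feeding $\upsigma_{ij}=0$ into Lemma~\ref{lema2.1} gives $\Delta^{M_2}d(x_0)=n\langle\mathbf H_{M_2},\eta\rangle-nH_{M_{1m_0}}(x_0)$. Now $\langle\mathbf H_{M_2},\eta\rangle\le|\mathbf H_{M_2}|\le b_0$, while the parallel-hypersurface curvature formula gives $H_{M_{1m_0}}(x_0)=\frac1n\sum_j\frac{\lambda_j(y_0)}{1-\lambda_j(y_0)m_0}>\frac1n\sum_j\lambda_j(y_0)=H_{M_1}(y_0)\ge b_0$, the inequality being strict because $\sum_j\lambda_j(y_0)=nH_{M_1}(y_0)>0$ precludes all principal curvatures from vanishing. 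Hence $\Delta^{M_2}d(x_0)<0$, contradicting the fact that $x_0$ is an interior minimum of $d$, where $\Delta^{M_2}d\ge 0$.

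If the infimum is not attained---this includes the genuine ideal-contact-at-infinity case $m_0=0$---I would reproduce the argument in the \hyperlink{PMI}{proof} of Theorem~\ref{principio maximo}, simplified by $\partial M_2=\emptyset$. Choose a regular value $\epsilon$ with $m_0<\epsilon$, pick $x_i\in M_2$ with $d(x_i)\to m_0$, and let $C$ be a connected component of $\{x\in M_2:d(x)\le\epsilon\}$ containing infinitely many of the $x_i$. Because $\partial M_2=\emptyset$, one has $\partial C=\{x\in C:d(x)=\epsilon\}$, on which $\phi\equiv e^{-C_0\epsilon}$; by Proposition~\ref{prop1} the set $C$ is parabolic, so Definition~\ref{defi1} yields $\sup_C\phi=\sup_{\partial C}\phi=e^{-C_0\epsilon}$. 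But $\sup_C\phi\ge\lim_i e^{-C_0 d(x_i)}=e^{-C_0 m_0}>e^{-C_0\epsilon}$, a contradiction. Since every case is contradictory, $M_2$ cannot lie in the convex side of $M_1$.

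The delicate point lies entirely in the non-attained case: one must guarantee that $d$ is of class $C^2$ and that the parallel-hypersurface formulas are valid on the portion of $\{d\le\epsilon\}$ under consideration, even when $m_0$ is not small. This is legitimate because points realizing distances close to the infimum lie before the cut locus of $M_1$, so there $1-\lambda_j d>0$ and $d$ is smooth; the care required is to confine the analysis to such near-minimizing points and to check that the escaping sequence may be taken inside a single component $C$. That is where I expect the main technical effort to concentrate, the rest being a direct transcription of the proof of Theorem~\ref{principio maximo}.
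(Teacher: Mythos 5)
Your proposal follows the paper only in the case $dist(M_1,M_2)=0$, where your argument (a component $C$ of $\{d\le\epsilon\}$, the subharmonic function $\phi=e^{-C_0d}$, parabolicity of $C$ via Proposition \ref{prop1}, and the contradiction $\sup_C\phi>e^{-C_0\epsilon}=\sup_{\partial C}\phi$) is exactly the paper's transcription of the proof of Theorem \ref{principio maximo}; your explicit verification that the convex-side hypothesis forces $x-y=\lambda\,\mathbf{H}_{M_1}(y)$ with $\lambda=d(x)/H_{M_1}(y)>0$ is a welcome detail the paper only asserts. For $dist(M_1,M_2)>0$ you genuinely diverge: the paper translates $M_2$ by the limit vector $v$ with $\vert v\vert=dist(M_1,M_2)$, observes that $\overline{M}_2=M_2+v$ must meet $M_1$ (otherwise $\overline{M}_2$, still parabolic, has an ideal contact at infinity with $M_1$ and the first case applies), and at a point $p\in M_1\cap\overline{M}_2$ invokes Hopf's Maximum Principle (Theorem \ref{hopf}) to obtain local coincidence, whence $M_1$ and $M_2$ would be locally parallel hyperplanes, contradicting $H_{M_1}\ge b_0>0$. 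Your attained-minimum computation ($\upsigma_{ij}=0$, $\mathrm{tr}A_d\vert_{T_{x_0}M_2^T}=nH_{M_{1m_0}}(x_0)>nb_0\ge n\langle\mathbf{H}_{M_2},\eta\rangle$, hence $\Delta^{M_2}d(x_0)<0$) is sound and even dispenses with parabolicity in that subcase; it mirrors the second paragraph of the paper's proof of Theorem \ref{dist} rather than anything in the proof of Theorem \ref{convexo}, and its strictness argument from $\sum_j\lambda_j(y_0)>0$ is correct.

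The genuine gap is your non-attained subcase with $m_0>0$. Lemmas \ref{lema2.1} and \ref{lema2.2} are established only for $d(x)\ll 1$: there the foot point is unique, $d$ is $C^2$, the parallel hypersurface $d^{-1}(d(x))$ is defined with $1-\lambda_jd>0$, and $\Vert II_d\Vert$ is bounded so that the constant $C_0$ exists. At the fixed positive level $m_0$ none of this is guaranteed, and your assertion that \emph{points realizing distances close to the infimum lie before the cut locus of $M_1$} is unjustified and false in general: take $M_1$ a cylinder over a stadium-shaped convex curve, whose inner cut locus sits at distance equal to the radius of the semicircular arcs; a surface $M_2$ inside can approach its infimum level precisely along that locus, with two foot points there. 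At such points $d$ is merely Lipschitz, so $\phi=e^{-C_0d}$ is not $C^2$ on the component $C$ and cannot be fed to Definition \ref{defi1} via Proposition \ref{prop1}; moreover near a focal point $\lambda_j/(1-\lambda_jd)$ blows up, so no uniform $C_0$ is available on $C$. To close the argument you should either replace this subcase by the paper's translation-plus-Hopf step, or prove that $\phi$ is subharmonic across the cut locus in a barrier or distributional sense together with a parabolicity statement valid for such functions --- a substantial addition, not a transcription. (Two smaller points: a single $x$ with $d(x)<\epsilon$ in a boundary component already yields the contradiction, so no selection of infinitely many $x_i$ in one component is needed, though a component with empty boundary must be handled separately; and even in your attained case $x_0$ may be a cut point, which one repairs by the standard smooth upper barrier $\overline{u}(x)=dist(x,M_1')$ for a local sheet $M_1'\ni y_0$, since $x_0$ also minimizes $\overline{u}$ on $M_2$.)
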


\begin{proof}
Suppose that $M_{2}$ is in the convex side of $M_{1}$. If $dist(M_{1},M_{2})=0$ then $M_{1}$ and $M_{2}$ have an ideal contact at infinty. In this case we can proceed with the demonstration of Theorem \ref{principio maximo} defining for an  $\epsilon >0$ sufficiently close to zero, the sets


$$M_2(\epsilon)=\{x\in M_2 : dist(x,M_1)\leq \epsilon\}.$$
For each $x\in M_2(\epsilon)$, consider the sets
 $$S_x=\{y\in M_1: \vert y - x\vert=dist(x,M_1) \ \ e \ \ x-y=\lambda{\bf H}_{M_1}(y), \ \ \lambda> 0 \}$$
and $M'_2(\epsilon)\subset M_2(\epsilon)$ given by  $$M'_2(\epsilon)= \{x\in M_2(\epsilon): S_x\neq \emptyset\}.$$ 
If $C_2(\epsilon)\subset M'_2(\epsilon)$ is a connected component of $M'_2(\epsilon)$, we will have that $$\partial C_2(\epsilon)= \{x\in C_2(\epsilon): dist(x,M_1)=\epsilon\}.$$ because we are assuming that $\partial M_2=\emptyset$.
And finally, defining in $C_2(\epsilon)$ the function $\phi (x)=e^{-C_0d(x)}$, where $d(x)=dist(x,M_{1})$, we will get to the same contradition of Theorem \ref{principio maximo}, because we are assuming that $M_{2}$ is parabolic. So, $M_{2}$ cannot lie in the convex side of $M_{1}$ if $dist(M_1,M_2)=0$.


If  $dist(M_1,M_2)>0$, there are sequences $y_n\in M_1$ and $x_n\in M_2$ in a way that the sequence $y_n - x_n$ have a subsequence that converges to a vector $v\in \mathbb{R}^{n+1}$ with  $\vert v\vert =dist(M_1,M_2)$. So let $\overline{M}_2=M_2+v$. In this way, we have that $dist(M_1,\overline{M}_2)=0$ and that $M_1\cap\overline{M}_2\neq\emptyset$, otherwise $\overline{M}_2$ have an ideal contact at infinity with $M_1$, what cannot happen by previous paragraph. Let  $p\in M_1\cap\overline{M}_2$, as $\vert v\vert =dist(M_1,M_2)$ and $M_2$ is in the convex side of $M_{1}$, we have that $M_{1}$ and $\overline{M}_2$ have an ideal contact at $p$. By Hopf'S Maximum Principle, $M_{1}$ and $\overline{M}_2$ coincide in a neighbourhood of $p$. That means that $M_{1}$ differs from $M_{2}$ by a translation in $\mathbb{R}^{n+1}$ of length $\vert v\vert =dist(M_1,M_2)$. As the line segment that starts in $M_{1}$ and ends in $M_{2}$, whose length is $dist(M_1,M_2)$, is orthogonal to both $M_{1}$ and $M_{2}$, we have that in this neighborhood of $p$, $M_{1}$ and $M_{2}$ are parallels hyperplans, contradicting the hyptothesis that the mean curvature function of $M_{1}$ is postive. Then, if $dist(M_{1}, M_{2})>0$, $M_{2}$ cannot lie in the convex side of $M_{1}$, which proves the Theorem.
\end{proof}

\begin{obs}
Generally, an hypersurface $M_1\subset\mathbb{R}^{n+k}$ with principal curvatures $\lambda _1\leq\cdots\leq\lambda _n\leq \cdots\leq\lambda _{n+k-1}$ with respect to the unit normal $\eta$ is called $n$-convex mean with respect to $\eta$ if $\lambda _1+\cdots+\lambda _n\geq 0$. Then the demonstration of Theorem \ref{convexo} above actually give as the following theorem, with a similar demonstration.
\end{obs}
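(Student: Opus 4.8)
The plan is to mirror, almost line by line, the proof of Theorem \ref{convexo}, exploiting the fact that Lemmas \ref{lema2.1} and \ref{lema2.2} were already proved in the ambient $\mathbb{R}^{n+k}$ for a hypersurface $M_1$ and an $n$-dimensional submanifold $M_2$, with the $n$-th mean curvature $\Uplambda_n$ playing the role of $H_{M_1}$. Concretely, the announced statement reads: if $M_1\subset\mathbb{R}^{n+k}$ is a properly embedded hypersurface with $\inf_{M_1}\Uplambda_n>0$, and $M_2\subset\mathbb{R}^{n+k}$ is a complete, properly embedded, parabolic $n$-submanifold with empty boundary, disjoint from $M_1$ and such that $\sup_{M_2}|\mathbf{H}_{M_2}|\le b_0\le\inf_{M_1}\Uplambda_n$ for some $b_0>0$, then $M_2$ cannot lie on the convex side of $M_1$ (the side toward which the normal $\eta$ realizing $\Uplambda_n>0$ points). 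The point is that $n$-convexity, $\Uplambda_n>0$, now takes over the geometric role of positive mean curvature, while the analytic estimate of Lemma \ref{lema2.2} requires only $\sup_{M_2}|\mathbf{H}_{M_2}|\le\inf_{M_1}\Uplambda_n$, which is precisely what is assumed.

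First I would dispose of the case $dist(M_1,M_2)=0$. Disjointness, proper embeddedness and the convex-side hypothesis force $M_2$ to have an ideal contact at infinity with $M_1$, the minimizing segments $x_i-y_i$ pointing along $\eta$. I would then reproduce verbatim the construction from the proof of Theorem \ref{principio maximo}: fix small $\epsilon>0$, form $M_2(\epsilon)$, its subset $M_2'(\epsilon)$ where $S_x\ne\emptyset$, and a connected component $C_2(\epsilon)$, whose boundary is the level set $\{d=\epsilon\}$ since $\partial M_2=\emptyset$. Lemma \ref{lema2.2} gives $\Delta^{M_2}d-C_0|\nabla^{M_2}d|^2\le 0$, so $\phi=e^{-C_0 d}$ is a bounded subharmonic function on $C_2(\epsilon)$, which is parabolic by Proposition \ref{prop1}; comparing $\sup_{C_2(\epsilon)}\phi=1$ with $\sup_{\partial C_2(\epsilon)}\phi=e^{-C_0\epsilon}$ yields the same contradiction as before.

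The remaining case $dist(M_1,M_2)>0$ is where the codimension genuinely matters, and this is the step I expect to be the main obstacle. Following Theorem \ref{convexo}, I would translate $M_2$ by a vector $v$ with $|v|=dist(M_1,M_2)$, producing $\overline{M}_2=M_2+v$ with $dist(M_1,\overline{M}_2)=0$; the previous case excludes an ideal contact at infinity for $\overline{M}_2$, so $M_1\cap\overline{M}_2\ne\emptyset$, and at a contact point $p$ the submanifold $\overline{M}_2$ is tangent to $M_1$ and touches it from the convex side, i.e. $T_p\overline{M}_2\subseteq T_pM_1$ with shared normal $\eta(p)$. Here Hopf's Maximum Principle (Theorem \ref{hopf}) cannot be invoked, since $\overline{M}_2$ and $M_1$ no longer have the same dimension; instead I would use the codimension-$k$ tangency principle underlying Theorem 4 of \cite{U-S}, comparing $\overline{M}_2$ with the parallel hypersurfaces of $M_1$ through the localized form of the inequality in Lemma \ref{lema2.2}. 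The strong maximum principle then forces $\overline{M}_2$ to coincide with an open piece of $M_1$ near $p$, turning all comparison inequalities into equalities, which pins that piece to be flat and contradicts $\inf_{M_1}\Uplambda_n>0$. Assembling this tangency argument in the correct codimension, rather than reusing the equidimensional Hopf principle, is the one genuinely new point and the hardest part of the adaptation.
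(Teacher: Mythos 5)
Your proposal is essentially the paper's argument, carried out more carefully than the paper itself does. The paper offers no separate proof of Theorem \ref{convexo1.2}: the remark merely asserts that the demonstration of Theorem \ref{convexo} goes through, Lemmas \ref{lema2.1} and \ref{lema2.2} having been stated from the outset in $\mathbb{R}^{n+k}$ with $\Uplambda_n$ in the role of $H_{M_1}$; your treatment of the case $dist(M_1,M_2)=0$ (the sets $M_2(\epsilon)$, $M'_2(\epsilon)$, $C_2(\epsilon)$, the subharmonic function $\phi=e^{-C_0d}$, parabolicity of $C_2(\epsilon)$ via Proposition \ref{prop1}) reproduces the paper's verbatim. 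The one genuine divergence is exactly where you flag it: in the case $dist(M_1,M_2)>0$ the ``similar demonstration'' of the paper, read literally, would re-invoke Hopf's Maximum Principle (Theorem \ref{hopf}) at a point of $M_1\cap\overline{M}_2$, which is not available when $\dim\overline{M}_2=n<n+k-1=\dim M_1$; the paper is silent on this point, and your substitution of the codimension-$k$ tangency principle behind Theorem 4 of \cite{U-S} is the correct repair. Note that you need not import that result wholesale: since $\overline{M}_2$ lies in the closed convex side, the function $d\geq 0$ on $\overline{M}_2$ satisfies $\Delta^{\overline{M}_2}d-C_0\vert\nabla^{\overline{M}_2}d\vert^2\leq 0$ near $p$ by Lemma \ref{lema2.2} and vanishes at the interior point $p$, so the strong maximum principle already gives $d\equiv 0$ near $p$, i.e.\ $\overline{M}_2\subset M_1$ locally. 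Finally, your closing appeal to ``flatness'' can be made precise without flatness of $M_1$: for every $x$ in the coincidence piece the segment from $x-v\in M_2$ to $x$ realizes $dist(M_1,M_2)$, so the constant vector $v/\vert v\vert$ is normal to $M_1$ at each such $x$; differentiating this constant normal in directions tangent to $\overline{M}_2$ gives $A\vert_{T_x\overline{M}_2}=0$, and Lemma \ref{lema1.1} then yields $0=\mathrm{tr}\,A\vert_{T_x\overline{M}_2}\geq\lambda_1+\cdots+\lambda_n=n\Uplambda_n(x)\geq nb_0>0$, which is the codimension-$k$ analogue of the paper's ``parallel hyperplanes'' contradiction.
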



\begin{teo}\label{convexo1.2}
Let $M_{1}$ be an hypersurface of $\mathbb{R}^{n+k}$ whose mean curvature function is  postive and $M_{2}$ an $n$-dimensional $C^{2}$-submanifold of $\mathbb{R}^{n+k}$. Also let $M_{1}$ and $M_{2}$ be disjoints, propperly embedded in $\mathbb{R}^{n+k}$ with empty boundaries. Suppose that $M_{2}$ is complete and that
 $$
 \sup _{M_2}\vert\mathbf{H}_{M_2}\vert \leq\inf _{M_1} \Uplambda _n.
 $$
 If $M_{2}$ is parabolic, it cannot lie in the convex side of $M_{1}$.
\end{teo}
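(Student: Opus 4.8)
The plan is to repeat, essentially verbatim, the proof of Theorem \ref{convexo}, the only novelty being that $M_2$ now has codimension $k$ rather than $1$. The decisive point is that Lemmas \ref{lema2.1} and \ref{lema2.2} were already established in this codimension-$k$ generality, and the standing hypothesis $\sup_{M_2}\vert\mathbf{H}_{M_2}\vert\leq\inf_{M_1}\Uplambda_n$ is exactly the hypothesis demanded by Lemma \ref{lema2.2}. So I would first assume, for contradiction, that $M_2$ lies in the convex side of $M_1$, and split into the two cases $dist(M_1,M_2)=0$ and $dist(M_1,M_2)>0$, precisely as in the proof of Theorem \ref{convexo}.

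In the case $dist(M_1,M_2)=0$, the fact that $M_2$ sits on the convex side of $M_1$ forces $M_1$ and $M_2$ to have an ideal contact at infinity, since the nearest-point segments become asymptotically aligned with $\mathbf{H}_{M_1}$ as required by Definition \ref{contato}. I would then run the argument of Theorem \ref{principio maximo} unchanged: for small $\epsilon>0$ form $M_2(\epsilon)$, the auxiliary sets $S_x$ and $M_2'(\epsilon)$, select a connected component $C_2(\epsilon)$ with $\partial C_2(\epsilon)=\{x\in C_2(\epsilon): d(x)=\epsilon\}$ (using $\partial M_2=\emptyset$), and invoke Lemma \ref{lema2.2} to obtain $\Delta^{M_2}d-C_0\vert\nabla^{M_2}d\vert^2\leq 0$ on $C_2(\epsilon)$. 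The function $\phi=e^{-C_0 d}$ is then subharmonic, $C_2(\epsilon)$ is parabolic by Proposition \ref{prop1}, and comparing $\sup_{C_2(\epsilon)}\phi=1$ with $\sup_{\partial C_2(\epsilon)}\phi=e^{-C_0\epsilon}$ yields the contradiction. Nothing in this case depends on $k$.

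For $dist(M_1,M_2)>0$ I would translate $M_2$ by the distance-realizing vector $v$ to obtain $\overline{M}_2=M_2+v$ with $dist(M_1,\overline{M}_2)=0$. If $\overline{M}_2$ is still disjoint from $M_1$ it has an ideal contact at infinity with $M_1$, reducing to the previous case; otherwise there is a genuine contact point $p\in M_1\cap\overline{M}_2$, and since $\vert v\vert=dist(M_1,M_2)$ with $M_2$ on the convex side, $\overline{M}_2$ meets $M_1$ tangentially from the convex side, so that $T_p\overline{M}_2\subset T_pM_1$ and $\eta(p)$ is normal to $\overline{M}_2$. This is where codimension $k$ prevents me from quoting Hopf's principle (Theorem \ref{hopf}), which is stated only between two hypersurfaces. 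The substitute is a strong-maximum-principle/tangency argument in the spirit of Theorem 4 of \cite{U-S}: near $p$ the function $d\mid_{\overline{M}_2}\geq 0$ attains an interior minimum $0$ and, by the computation behind Lemma \ref{lema2.2} (which uses only $d\ll 1$, not that the contact be asymptotic), satisfies $\Delta^{\overline{M}_2}d\leq C_0\vert\nabla^{\overline{M}_2}d\vert^2$; hence $\phi=e^{-C_0 d}$ is subharmonic with an interior maximum at $p$ and must be locally constant, giving $\overline{M}_2\subset M_1$ near $p$. Evaluating the identity of Lemma \ref{lema2.1} on this coincidence set, together with the equality case of Lemma \ref{lema1.1} and the bound $\vert\mathbf{H}_{\overline{M}_2}\vert\leq\inf_{M_1}\Uplambda_n$, should contradict $H_{M_1}>0$, playing the role that ``parallel hyperplanes contradicting positive mean curvature'' plays in the hypersurface case.

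The main obstacle is exactly this last step of Case 2: replacing Hopf's maximum principle by a tangency principle between the hypersurface $M_1$ and the higher-codimension submanifold $\overline{M}_2$, and then extracting a genuine contradiction from the local coincidence $\overline{M}_2\subset M_1$. I would verify carefully that the inequality of Lemma \ref{lema2.2} is still available at an honest (non-asymptotic) tangency point—which it is, since its derivation needs only that $d$ be $C^2$ and $\ll 1$—and that the equality analysis of Lemma \ref{lema1.1} genuinely forces the forbidden configuration incompatible with $H_{M_1}>0$. Case 1, by contrast, is a routine transcription of the proof of Theorem \ref{principio maximo}.
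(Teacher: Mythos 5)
Your Case 1 is exactly the paper's own argument: the paper proves this theorem simply by asserting that the demonstration of Theorem \ref{convexo} carries over ``with a similar demonstration'', and your transcription of the $dist(M_1,M_2)=0$ case via $M_2(\epsilon)$, $C_2(\epsilon)$, $\phi=e^{-C_0 d}$, Lemma \ref{lema2.2} and Proposition \ref{prop1} is indeed codimension-independent, since Lemmas \ref{lema2.1} and \ref{lema2.2} are already stated in $\mathbb{R}^{n+k}$. You are also right to flag that in Case 2 the paper's tool, Hopf's principle (Theorem \ref{hopf}), is formulated only for pairs of hypersurfaces, and your substitute up to the local coincidence is sound: on the closed convex side $d$ agrees with the smooth signed distance, so the inequality of Lemma \ref{lema2.2} holds at an honest tangency point, and the strong maximum principle applied to $\phi=e^{-C_0d}$, which attains an interior maximum at $p$, gives $\overline{M}_2\subset M_1$ near $p$. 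This is the Jorge--Tomi/Dierkes--Schwab tangency argument, and it is the correct replacement.

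The genuine gap is your last step. Equality in Lemma \ref{lema1.1} on the coincidence set does \emph{not} contradict $H_{M_1}>0$: where $d\equiv 0$ the identity of Lemma \ref{lema2.1} reduces to $\mathrm{tr}A\mid_{T_q\overline{M}_2}=n\langle\mathbf{H}_{M_2},\eta\rangle$, and the chain $n\Uplambda_n(q)\leq \mathrm{tr}A\mid_{T_q\overline{M}_2}\leq n\vert\mathbf{H}_{M_2}\vert\leq n\inf_{M_1}\Uplambda_n$ collapses to equalities that are perfectly realizable with $H_{M_1}>0$; an equatorial $S^n_R$ inside $S^{n+k-1}_R$ satisfies all of them. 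What actually closes Case 2 is the translation structure you discarded: for every $q$ in the coincidence set, $q-v\in M_2$ and $\vert (q-v)-q\vert=\vert v\vert=dist(M_1,M_2)$, so $q$ is a nearest point of $M_1$ to $q-v$ and hence $v\perp T_qM_1$; thus $\eta\equiv -v/\vert v\vert$ along $\overline{M}_2$ near $p$, the shape operator of $M_1$ annihilates $T_q\overline{M}_2$, and $0=\mathrm{tr}A\mid_{T_q\overline{M}_2}\geq n\Uplambda_n(q)$. This is the codimension-$k$ analogue of the paper's ``parallel hyperplanes'' conclusion, but note it contradicts the hypotheses only when $\inf_{M_1}\Uplambda_n>0$ (the analogue of $b_0>0$ in Theorems \ref{principio maximo} and \ref{convexo}), which for $k\geq 2$ does \emph{not} follow from $H_{M_1}>0$: for $M_1=S^1_R\times\mathbb{R}^2\subset\mathbb{R}^4$ one has $H_{M_1}>0$ but $\Uplambda_2\equiv 0$, and the parabolic minimal plane $M_2=\{(0,0)\}\times\mathbb{R}^2$ lies in the convex side at distance $R$ with $\sup_{M_2}\vert\mathbf{H}_{M_2}\vert=0=\inf_{M_1}\Uplambda_2$. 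So your ``should contradict $H_{M_1}>0$'' cannot be completed as stated: you must restore the translation rigidity, and the contradiction you then obtain is with strict positivity of $\inf_{M_1}\Uplambda_n$, which has to be read into (or added to) the hypotheses.
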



This last theorem is the version with an ideal contact at infinity of the Theorem 4 in \cite{U-S} in the case when $M_{2}$ is parabolic and it is called there \emph{A barrier principle for submanifolds of arbitrary codimension and bounded mean curvature}, see also in \cite{L-T} the case for minimal submanifolds.

\subsection{Applications of Omori-Yau Maximum Principles}

Adding an appropriate hypothesis about the Ricci curvature of $M_2$ hypersurface, we can withdraw its condition of parabolicity on Theorem \ref{convexo} and obtain an analog result given in Theorem \ref{dist} soon. Before, we will demonstrate the Lemma \ref{shapeoperator} below.
  
  \begin{lema}\label{shapeoperator}
  Let $M\subset \mathbb{R}^{n+k}$ be a properly embedded hypersurface and $A$ its shape operator. We denote $d_j=d(x_j)=dist(x_j,M)$, where $x_j\in\mathbb{R}^{n+k}$ it is a sequence of points such that $\lim_{j\to  +\infty}d_j=0$, and $A_j$ as the shape operator of $M$ in $y_j\in M$, such that $\vert x_j-y_j\vert=d_j$. Suppose that $$\limsup_{j\to +\infty}\| A_j\| <+\infty.$$
  If $II_{d_j}$ is the second fundamental form of the parallel hypersurface $M_{d_j}=d^{-1}(d_j)$ in $T_{x_j}M_{d_j}$, then $$\lim _{j\to +\infty}\| II_{d_j}\| <+\infty.$$
  \end{lema}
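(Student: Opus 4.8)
The plan is to reduce everything to the explicit description of the principal curvatures of a parallel hypersurface recorded in the Preliminaries. First I would recall that if $\lambda_1(y_j) \le \cdots \le \lambda_{n+k-1}(y_j)$ denote the principal curvatures of $M$ at the foot point $y_j$, then the parallel hypersurface $M_{d_j} = d^{-1}(d_j)$ has, at the point $x_j = y_j + d_j\,\eta(y_j)$, principal curvatures
$$\mu_i^j = \frac{\lambda_i(y_j)}{1 - \lambda_i(y_j)\,d_j}, \qquad i = 1, \dots, n+k-1,$$
valid once $d_j$ is small enough that $x_j$ lies before the first focal point of $M$ along $\eta$.

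Next I would invoke the hypothesis $\limsup_{j\to+\infty}\|A_j\| < +\infty$. Since $\|A_j\|$ is the operator norm of the shape operator at $y_j$, it equals $\max_i |\lambda_i(y_j)|$; hence there is a constant $C > 0$ and an index $j_0$ so that $|\lambda_i(y_j)| \le C$ for all $i$ and all $j \ge j_0$. Because $d_j \to 0$, I can enlarge $j_0$ so that $C\,d_j \le 1/2$ for $j \ge j_0$; this simultaneously guarantees $\lambda_i(y_j)\,d_j < 1$, so that no focal point is crossed and the displayed formula is legitimate, and it yields the uniform lower bound $|1 - \lambda_i(y_j)\,d_j| \ge 1 - C d_j \ge 1/2$ on every denominator.

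Combining the two estimates gives $|\mu_i^j| \le |\lambda_i(y_j)|/(1 - C d_j) \le C/(1 - C d_j)$ for every $i$ and every $j \ge j_0$. Since $\|II_{d_j}\|$ is controlled by $\max_i |\mu_i^j|$ (it equals this maximum for the operator norm, and is at most $\sqrt{n+k-1}$ times it for the Hilbert--Schmidt norm), I obtain $\|II_{d_j}\| \le C/(1 - C d_j)$. Letting $j \to +\infty$ and using $d_j \to 0$ produces $\limsup_{j\to+\infty}\|II_{d_j}\| \le C < +\infty$, which is the claim.

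There is essentially no hard step here: the whole content is the parallel-hypersurface curvature formula, which is already available from the Preliminaries. The only point requiring a little care is to observe that the smallness of $d_j$ does double duty — it keeps $x_j$ away from the focal set of $M$ (so that $M_{d_j}$ is genuinely a $C^2$ hypersurface with the stated curvatures) and it forces the denominators $1 - \lambda_i(y_j) d_j$ to stay uniformly bounded away from zero, which is exactly what prevents the $\mu_i^j$ from blowing up despite the $1/(1-\lambda_i d_j)$ factor.
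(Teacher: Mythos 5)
Your proof is correct, but it takes a different route from the paper's. The paper does not use the closed-form principal-curvature formula here: it writes down the Riccati equation $\overline{\nabla}_{Dd}A_{d}+A_{d}^{2}+\overline{R}(\cdot\,,Dd)Dd=0$ along the normal geodesic from $y_j$ to $x_j$, parallel-transports a test vector, differentiates $\langle A_d(V),V\rangle$, and extracts the first-order Taylor expansion $\langle A_{d_j}(v),v\rangle=\langle A_j(v_0),v_0\rangle-\bigl[\langle A_j^2(v_0),v_0\rangle+\langle\overline{R}(v_0,\eta)\eta,v_0\rangle\bigr]d+O(d^2)$, which after setting $\overline{R}=0$ yields $\|II_{d_j}\|\leq\|A_j\|+\|A_j\|^2 d_j+O(d_j^2)$ and the conclusion as $j\to+\infty$. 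You instead quote the explicit eigenvalue formula $\mu_i^j=\lambda_i(y_j)/(1-\lambda_i(y_j)d_j)$ for the parallel hypersurface — which the paper itself records in the Preliminaries with references — and combine the uniform bound $|\lambda_i(y_j)|\leq C$ with $Cd_j\leq 1/2$ to keep every denominator away from zero, getting $\|II_{d_j}\|\leq C/(1-Cd_j)$ directly. In flat ambient space your argument is the more elementary of the two, and it is arguably tighter: the paper's $O(d^2)$ remainder has an implicit constant whose uniformity in $j$ is not addressed, whereas your closed-form bound is exact and manifestly uniform; your observation that $d_j\to 0$ does double duty (staying before the focal set and bounding the denominators) is exactly the point where care is needed, and you handle it. What the Riccati route buys instead is generality: it carries the ambient curvature term $\overline{R}$ and so extends verbatim to non-flat space forms, where no closed-form parallel-surface formula of this simple rational type is available.
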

  \begin{proof}
  If $A_{d_j}$ is the shape operator of $M_{d_j}$ then we have the Riccati's equation, see \cite{Eschenburg},
  \begin{equation}\label{riccati}
  \overline{\nabla}_{Dd}A_{d_j}+A^2_{d_j}+\overline{R}(\cdot \; ,Dd)Dd=0
  \end{equation}
  where $\overline{\nabla}$ and $\overline{R}$ are the Riemannian connection and the tensor curvature of the $\mathbb{R}^{n+k}$, respectively. For $x_j\in \mathbb{R}^{n+k}$ such that $d_j\ll 1$ consider the normalized geodesic minimizer $\beta : [0,d_j]\rightarrow\mathbb{R}^{n+k}$ with $\beta (0)=y_j\in M$ and $\beta (d_j)=x_j\in M_{d_j}$. As $\beta$ is a line segment joining $y_j$ to $x_j$ and $d_j\ll 1$, then $\beta '(d)=Dd$ if $d\in [0,d_j]$. Let $v\in T_{x_j}M_{d_j}$ and $V$ its parallel transport alongside $\beta$ with $V(0)=v_0\in T_{y_j}M$. Denoting by $'$ the derivative alongside $\beta$ we have
  $$\langle A_{d}(V),V\rangle '=\langle \overline{\nabla}_{\gamma '}A_{d}(V),V\rangle =\langle \overline{\nabla}_{Dd}A_{d}(V),V\rangle.$$
  Thus, by (\ref{riccati})
  $$\langle A_{d}(V),V\rangle '=-\langle A^2_{d}(V),V\rangle -\langle \overline{R}(V,Dd)Dd,V\rangle .$$
  Therefore in $d=0$ we have
  $$\langle A_{d}(V),V\rangle '\mid _{d=0}=-\langle A^2_{j}(v_0),v_0\rangle -\langle \overline{R}(v_0,\eta)\eta,v_0\rangle.$$
  Thus, the Taylor expansion of $II_{d_j}(v)=\langle A_{d_j}(v),v\rangle$ around second fundamental form $II_{j}=II_{d_j}\mid_{d=0}$ of $M$ it is given by 
  \begin{equation}\label{taylor}
  \langle A_{d_j}(v),v\rangle=\langle A_{j}(v_0),v_0\rangle -[\langle A^2_{j}(v_0),v_0\rangle +\langle \overline{R}(v_0,\eta)\eta,v_0\rangle]d +O(d^2)
  \end{equation}
  Being the sectional curvature of $\mathbb{R}^{n+k}$ null, we have from (\ref{taylor}) that
  \begin{equation}\label{A_j}
  \vert II_{d_j}(v)\vert\leq\|A_j\|\vert v_0\vert ^2+\|A_j\|^2\vert v_0\vert ^2d + O(d^2).
  \end{equation}
  Therefore from (\ref{A_j})
  \begin{equation}\label{A_j2}
    \|II_{d_j}\|\leq\|A_j\|+\|A_j\|^2d + O(d^2).
    \end{equation}
    As $j\to +\infty$ in (\ref{A_j2}) we have what we wanted.
  \end{proof}
  
\begin{teo}\label{dist}
Let $M_1$ and $M_2$ be two disjoints, without boundary and properly embedded hypersurfaces in $\mathbb{R}^{n+1}$. Assume that $M_2$ is complete with Ricci curvature satisfying
\begin{center}
 $Ric_{M_2}\geq -(n-1)R_0$,\ \ $R_0>0.$
 \end{center} 
 Let $d(x)=dist(x,M_1)$ and $A_j$ as in Lemma \ref{shapeoperator} and suppose that 
 \begin{equation}
 \limsup_{j\to +\infty}\| A_j\| <+\infty
 \end{equation}
 for every sequence $x_j\in M_2$ such that $\lim _{j\to +\infty}d(x_j)=\inf_{M_2}dist(x,M_1)$. If 
 \begin{equation}\label{equacao10}
  \sup _{M_2}\vert {\bf H}_{M_2}\vert <\inf _{M_1}H_{M_1},
  \end{equation}
  then $M_2$ it connot lie in the convex side of $M_1$.
\end{teo}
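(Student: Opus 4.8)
The plan is to argue by contradiction, assuming $M_2$ lies on the convex side of $M_1$, and to replace the parabolicity step of Theorem \ref{convexo} by the Omori--Yau maximum principle applied to $d(x)=dist(x,M_1)$ on $M_2$. First I would reduce to the case $dist(M_1,M_2)=0$ exactly as in the proof of Theorem \ref{convexo}: if $dist(M_1,M_2)=m_0>0$, I translate $M_2$ by a vector $v$ with $|v|=m_0$ realizing the distance, obtaining $\overline M_2=M_2+v$ with $dist(M_1,\overline M_2)=0$. If $M_1\cap\overline M_2\neq\emptyset$, then at a touching point $p$ the hypersurfaces have an ideal contact at $p$ (because $M_2$ sits on the convex side and $|v|$ equals the distance), and Hopf's Maximum Principle (Theorem \ref{hopf}) forces them to coincide near $p$, whence $M_1$ and $M_2$ are parallel hyperplanes near $p$, contradicting $H_{M_1}>0$. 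If instead $M_1\cap\overline M_2=\emptyset$, then $\overline M_2$ is a translate of $M_2$ and so satisfies every hypothesis (completeness, the Ricci bound, $\limsup_j\|A_j\|<+\infty$, and the strict bound \eqref{equacao10} are all translation invariant), so it suffices to treat $dist(M_1,M_2)=0$.

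Assuming now $dist(M_1,M_2)=0$, i.e. $\inf_{M_2}d=0$, I would invoke the Omori--Yau maximum principle for $d$, which is bounded below on the complete manifold $M_2$ with $Ric_{M_2}\geq-(n-1)R_0$. This produces a sequence $x_j\in M_2$ with
$$d(x_j)\to 0,\qquad |\nabla^{M_2}d(x_j)|\to 0,\qquad \liminf_{j\to+\infty}\Delta^{M_2}d(x_j)\geq 0.$$
Since $d(x_j)\to\inf_{M_2}d$, the hypothesis yields $\limsup_j\|A_j\|<+\infty$, and hence Lemma \ref{shapeoperator} keeps $\|II_{d_j}\|$ bounded. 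I then evaluate the identity of Lemma \ref{lema2.1} along $\{x_j\}$, namely
$$\Delta^{M_2}d=\sum_{i,j}^{n}\upsigma_{ij}II_d(e_i^T,e_j^T)+n\langle{\bf H}_{M_2},Dd\rangle-\mathrm{tr}A_d\mid_{T_xM_2^T},$$
and estimate each term: by \eqref{eq6} the first term equals $\frac{1}{1-|\nabla^{M_2}d|^2}II_d((\nabla^{M_2}d)^T,(\nabla^{M_2}d)^T)$, so it is dominated by a constant times $\|II_{d_j}\|\,|\nabla^{M_2}d(x_j)|^2$ and tends to $0$; by Cauchy--Schwarz the second is $\leq n\sup_{M_2}|{\bf H}_{M_2}|$ since $|Dd|=1$; and since $d(x_j)\ll1$, the parallel-hypersurface computation of the preliminaries together with Lemma \ref{lema1.1} gives $\mathrm{tr}A_d\mid_{T_{x_j}M_2^T}\geq n\Uplambda_n(y_j)=nH_{M_1}(y_j)\geq n\inf_{M_1}H_{M_1}$, using $\Uplambda_n=H_{M_1}$ as $k=1$. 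Combining,
$$\limsup_{j\to+\infty}\Delta^{M_2}d(x_j)\leq n\Big(\sup_{M_2}|{\bf H}_{M_2}|-\inf_{M_1}H_{M_1}\Big)<0,$$
where the strict inequality \eqref{equacao10} is decisive; this contradicts $\liminf_j\Delta^{M_2}d(x_j)\geq 0$ and finishes the argument.

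The main obstacle I anticipate is the regularity of $d$: Omori--Yau is stated for a $C^2$ function, while $d=dist(\cdot,M_1)$ is only $C^2$ on a tubular neighbourhood of $M_1$. I expect to resolve this by exploiting that $\inf_{M_2}d=0$, so the minimizing sequence can be forced into the region $\{d<\delta\}$ where $d$ is smooth; equivalently, one applies the principle to a smooth bounded modification $\widetilde d$ agreeing with $d$ on $\{d<\delta\}$ and with the same infimum, so that $\nabla\widetilde d=\nabla^{M_2}d$ and $\Delta\widetilde d=\Delta^{M_2}d$ along the sequence. The secondary technical point is that the hypothesis $\limsup_j\|A_j\|<+\infty$, transported through Lemma \ref{shapeoperator}, is exactly what bounds $\|II_{d_j}\|$ and lets $|\nabla^{M_2}d(x_j)|\to 0$ annihilate the $\upsigma_{ij}$-term; without it this term need not vanish and the contradiction would break down.
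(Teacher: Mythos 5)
Your proposal is correct and, in its core, is exactly the paper's proof: the paper also applies the Omori--Yau principle (minimum version) to $d$ to get a sequence with $d(x_j)\to\inf_{M_2}d$, $\vert\nabla^{M_2}d\vert(x_j)<\tfrac{1}{j}$, $\Delta^{M_2}d(x_j)>-\tfrac{1}{j}$, then feeds it into the identity of Lemma \ref{lema2.1}, kills the $\upsigma_{ij}$-term using Lemma \ref{shapeoperator} together with the hypothesis $\limsup_j\Vert A_j\Vert<+\infty$, bounds $\mathrm{tr}A_d\mid_{T_{x_j}M_2^T}\geq\lambda_1+\cdots+\lambda_n$ via Lemma \ref{lema1.1} and the parallel-hypersurface computation, and lets $j\to+\infty$ to obtain $n\vert{\bf H}_{M_2}\vert\geq\lambda_1+\cdots+\lambda_n$, contradicting the strict inequality (\ref{equacao10}). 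The one place you diverge is the case $dist(M_1,M_2)>0$: after translating to $\overline{M}_2=M_2+v$, the paper does not invoke Hopf's principle as in Theorem \ref{convexo}; instead, at a touching point $p_0\in M_1\cap\overline{M}_2$ it observes that $d$ attains an interior minimum there, so $\nabla^{\overline{M}_2}d(p_0)=0$ while Lemma \ref{lema2.2} (whose inequality is strict under (\ref{equacao10})) forces $\Delta^{\overline{M}_2}d(p_0)<0$, contradicting the minimum --- your Hopf route is equally valid, and indeed under the strict hypothesis (\ref{equacao10}) coincidence near $p$ is impossible outright, so your final parallel-hyperplane step is not even needed. Finally, your remark on the regularity of $d$ (Omori--Yau needs a $C^2$ function, while $d$ is only $C^2$ near $M_1$, fixed by composing with a cutoff that is the identity on $\{d<\delta/2\}$ and constant past $\delta$) addresses a genuine gap that the paper passes over in silence, so it is a welcome addition rather than a deviation.
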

\begin{proof}
By Omori-Yau's Maximum Principle, with the version for the minimum, there is a sequence $x_j\in M_2$ such that $\lim _{j\to\infty}d(x_j)=\inf_{M_2}dist(x,M_1)$ and we have that
\begin{equation}\label{equacao12}
\vert \nabla ^{M_2}d\vert (x_j) < \dfrac{1}{j}\ \ \ \ \ \   e \ \ \ \ \  \ \Delta ^{M_2}d(x_j) > -\dfrac{1}{j}, \ \ \ \ \forall j\in \mathbb{N}.
\end{equation}
Suppose that $M_2$ lies on the mean convex side of $M_1$ and that $dist(M_1,M_2)=0$. Then $M_2$ have an ideal contact at infinity with $M_1$. This way, for $j\in \mathbb{N}$ sufficiently larg, the  Lemma \ref{lema2.1} give us that
\begin{equation}\label{equacao13}
\Delta ^{M_2}d(x_j) - n\vert{\bf H}_{M_2}\vert + \displaystyle\sum_{i}^{n}\lambda_i(d_j) - \displaystyle\dfrac{II_{d_j}((\nabla ^{M_2}d)^T,(\nabla ^{M_2}d)^T)}{1-\vert\nabla^{M_2}d\vert ^2}(x_j)\leq 0
\end{equation}
where $\lambda_i(d_j)$ are the main curvatures of the parallel hypersurface $d^{-1}(d_j)$ in the orientation given by $\eta$ and $d_j=d(x_j)$. As we have that $\Delta ^{M_2}d(x_j) > -\dfrac{1}{j}$ then (\ref{equacao13}) give us that
\begin{equation}\label{equacao14}
\dfrac{1}{j}> - n\vert{\bf H}_{M_2}\vert + \displaystyle\sum_{i}^{n}\lambda_i(d_j) - \displaystyle\dfrac{II_{d_j}((\nabla ^{M_2}d)^T,(\nabla ^{M_2}d)^T)}{1-\vert\nabla^{M_2}d\vert ^2}.
\end{equation}
And from $\vert \nabla ^{M_2}d\vert (x_j) < \dfrac{1}{j}$ we have that $$\displaystyle\dfrac{\vert II_{d_j}((\nabla ^{M_2}d)^T,(\nabla ^{M_2}d)^T)\vert }{1-\vert\nabla^{M_2}d\vert ^2} \leq \| II_{d_j}\|\dfrac{\vert\nabla ^{M_2}d\vert ^2}{1-\vert\nabla^{M_2}d\vert ^2}<\| II_{d_j}\|\dfrac{1}{j^2-1}.$$
Then, from (\ref{equacao14})
\begin{equation}\label{equacao15}
\begin{array}{rll}
\| II_{d_j}\|\dfrac{1}{j^2-1}+\dfrac{1}{j}&>& \displaystyle\sum_{i}^{n}\lambda_i(d_j)-n\vert{\bf H}_{M_2}\vert\\
&\geq&\displaystyle\sum_{i}^{n}\lambda_i -n\vert{\bf H}_{M_2}\vert.
\end{array} 
\end{equation}
As $j\to +\infty$ in (\ref{equacao15}), we have, by Lemma \ref{shapeoperator}, that $$n\vert{\bf H}_{M_2}\vert\geq \lambda_1 + \cdots+\lambda_n$$ contradicting (\ref{equacao10}). Thus, if $dist(M_1,M_2)=0$, $M_2$ cannot lie in the convex side of $M_1$.

If $dist (M_1,M_2)>0$, let $\overline{M}_2$ has in second paragraph in the demonstration of Theorem \ref{convexo}. Again $M_1\cap\overline{M}_2\neq\emptyset$, otherwise $\overline{M}_2$ have an ideal contact at infinity with $M_1$, what cannot happen by previous paragraph. Let $p_0\in M_1\cap\overline{M}_2$, then $d$ would attain a minimum at $p_0$. In this case $\nabla ^{\overline{M}_2}d(p_0)=0$ and by Lemma \ref{lema2.2} we have $\Delta ^{\overline{M}_2}d(p_0)<0$, which contradicts that $p_0$ is a minimum point for $d$. Therefore, $M_2$ cannot lie in the convex side of $M_1$ if $dist(M_1,M_2)>0$ and ends the proof of theorem.
\end{proof}


\begin{obs}\label{obs4.2}
Let $M$ be an oriented hypersurface isometrically imersed in Riemannian space form $\mathbb{M}^{n+1}_c$. If $R_M$ is the normalized scalar curvature of $M$ and $A$ its shape operator, then from Gauss equation we have the following relationship, see for instance \cite{Alias},
$$\|A\|^2=n^2H_M^2-n(n-1)(R_M-c).$$
Therefore, if $R_M\geq c$ we have to $\|A\|^2\leq n^2H_M^2$, and so $\|A\|\leq nH_M$, if $H_M>0$. Then if the mean curvature of $M$ is bounded will $$\sup _M\|A\|\leq n\sup _MH_M<+\infty.$$
\end{obs}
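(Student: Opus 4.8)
The plan is to obtain the identity directly from the Gauss equation by diagonalizing the shape operator and summing sectional curvatures, and then to read off the two elementary consequences. First I would fix, at an arbitrary point of $M$, an orthonormal frame $\{e_1,\dots,e_n\}$ of principal directions, so that $A e_i=\kappa_i e_i$ where $\kappa_1,\dots,\kappa_n$ are the principal curvatures. In these terms the mean curvature and the Hilbert--Schmidt norm of $A$ are the elementary symmetric quantities $H_M=\frac{1}{n}\sum_i\kappa_i$ and $\|A\|^2=\mathrm{tr}(A^2)=\sum_i\kappa_i^2$.

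Next I would invoke the Gauss equation for a hypersurface in the space form $\mathbb{M}^{n+1}_c$. Because $\mathbb{M}^{n+1}_c$ has constant sectional curvature $c$, for the $2$-plane spanned by two distinct principal directions it reduces to $K(e_i,e_j)=c+\kappa_i\kappa_j$ for $i\neq j$. Summing over all ordered pairs $i\neq j$ yields the unnormalized scalar curvature $S=\sum_{i\neq j}K(e_i,e_j)=n(n-1)c+\sum_{i\neq j}\kappa_i\kappa_j$.

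The key algebraic step is the Newton-type identity $\sum_{i\neq j}\kappa_i\kappa_j=\big(\sum_i\kappa_i\big)^2-\sum_i\kappa_i^2=n^2H_M^2-\|A\|^2$. Substituting this into the expression for $S$ and using the normalization $R_M=S/[n(n-1)]$ gives $n(n-1)R_M=n(n-1)c+n^2H_M^2-\|A\|^2$, which rearranges at once to the claimed relation $\|A\|^2=n^2H_M^2-n(n-1)(R_M-c)$. The two stated consequences are then immediate: if $R_M\geq c$ then $n(n-1)(R_M-c)\geq 0$, whence $\|A\|^2\leq n^2H_M^2$, and taking positive square roots (legitimate since $H_M>0$) gives $\|A\|\leq nH_M$; passing to the supremum yields $\sup_M\|A\|\leq n\sup_M H_M$, which is finite whenever the mean curvature is bounded.

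The computation is routine; the only point demanding care is bookkeeping of conventions, namely the placement of the factor $n(n-1)$ in the definition of the normalized scalar curvature $R_M$ and the sign convention in the Gauss equation, so that all constants line up exactly as in the stated identity. This is where I expect the only real attention to be required.
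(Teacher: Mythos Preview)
Your argument is correct and is exactly the standard derivation the paper is alluding to: the paper does not actually carry out the computation but simply asserts that the identity follows from the Gauss equation (citing \cite{Alias}) and then reads off the two elementary consequences. Your diagonalization of $A$, the Gauss-equation expression $K(e_i,e_j)=c+\kappa_i\kappa_j$, and the Newton identity $\sum_{i\neq j}\kappa_i\kappa_j=n^2H_M^2-\|A\|^2$ are precisely the ingredients behind the cited formula, so nothing is missing.
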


With the Remark \ref{obs4.2}, the Theorem \ref{dist} give us the following corollary.
 
 \begin{corol}\label{convexo1.3}
Let $M_1$ and $M_2$ as in Theorem \ref{dist}. Assume that $M_2$ is complete with Ricci curvature bounded below. Let $R_{M_1}$ the scalar curvature (normalized) of $M_1$ and suppose that  $R_{M_1}\geq 0$. Suppose also that $H_{M_1}$ is bounded and that 
$$ \sup _{M_2}\vert {\bf H}_{M_2}\vert <\inf _{M_1}H_{M_1}.$$
Then $M_2$ cannot lie in the convex side of $M_1$.
\end{corol}
\begin{proof}
Suppose $M_2$ lies in the convex side of $M_1$ and that $dist(M_1,M_2)=0$. Let $II_{d_j}$ be the second fundamental form of the parallel hypersurface $d^{-1}(d_j)$, where $d_j=d(x_j)=dist(x_j,M_1)$ and $x_j\in M_2$ is a sequence of points given by Omori-Yau, i.e., with $d_j\to 0$ satisfying (\ref{equacao12}). Then accordingly with the Remark \ref{obs4.2} and by Lemma \ref{shapeoperator}, we have  $\lim_{j\to +\infty} \|II_{d_j}\|<+\infty$. Proceeding like the demonstration of Theorem \ref{dist} we have the desired.
\end{proof}

Notice that in the demonstration of Theorem \ref{dist} the hypotheses about $M_2$ are essentially so that we can use the Omori-Yau maximum principle on function $d(x)=dist(x,M_1)$. Thus, we can suppose others hypotheses on $M_2$ allowing us to use this principle. This can be done using the  Pigola-Rigoli-Setti's Theorem, see \cite{Pi-Ri-SE}, which extends the class of Riemannian manifolds for which hold the Omori-Yau maximum principle. Before, however, we give the following definition given by Pigola-Rigoli-Setti also in \cite{Pi-Ri-SE}.

\begin{defi} The Omori-Yau maximum principles is said to hold on Riemannian manifold $M$ if for any given $u\in C^2(M)$ with $u^*=\sup _{M}u<+\infty$, there exist a sequence of points $x_j\in M$, depending on $M$ and on $u$, such that
$$\lim _{j\to +\infty}u(x_j)=u^*, \ \ \vert\nabla ^M u\vert (x_j)<\dfrac{1}{j}, \ \ \Delta ^M u(x_j)<\dfrac{1}{j}.
$$
\end{defi}
\begin{teo}[Pigola-Rigoli-Setti]\label{Pi-Ri-Se}
Let $M$ a Riemannian manifold and assume there exists a non-negative function $\gamma$ satisfying the following:\newline

C1) $\gamma (x)\to +\infty$ as $x \to +\infty$;\newline

C2) $\exists B>0$ such that $\vert\nabla ^M\gamma\vert\leq B\sqrt{\gamma}$ off a compact set;\newline

C3) $\exists C>0$ such that $\Delta ^M\gamma\leq C\sqrt{\gamma G(\sqrt{\gamma})}$ off a compact set, were\newline $G:[0,+\infty )\rightarrow [0,+\infty )$ is a smooth function satisfying
$$G(0)>0, \ \ G'(0)\geq 0, \ \ \displaystyle\int_{0}^{+\infty}\dfrac{ds}{\sqrt{G(s)}}=+\infty, \ \ \displaystyle\limsup_{t\to +\infty}\dfrac{tG(\sqrt{t})}{G(t)}<+\infty.$$
Then the Omori-Yau maximum principle holds on $M$.

\end{teo}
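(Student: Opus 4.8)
The plan is to verify the three defining properties of the Omori--Yau maximum principle directly, using $\gamma$ together with $G$ to manufacture a one-parameter family of comparison functions whose maxima produce the required sequence. Fix $u\in C^2(M)$ with $u^*=\sup_M u<+\infty$; the goal is to produce, for each $j\in\mathbb{N}$, a point $x_j$ with $u(x_j)>u^*-1/j$, $\vert\nabla^M u\vert(x_j)<1/j$ and $\Delta^M u(x_j)<1/j$. Choosing $\eta=1/j$ and extracting the $x_j$ below then gives the sequence in the definition.

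First I would build the comparison function out of $G$. Let $t_0$ be large enough that the estimates C2 and C3 hold on $\{\gamma\ge t_0\}$, and define for $t\ge t_0$ the function $$\varphi(t)=\int_{t_0}^{t}\frac{ds}{\sqrt{s\,G(\sqrt{s})}},$$ extended to a nonnegative increasing $C^2$ function on $[0,+\infty)$. The substitution $s=\tau^2$ turns $\int^{\infty}ds/\sqrt{s\,G(\sqrt s)}$ into $2\int^{\infty}d\tau/\sqrt{G(\tau)}$, so the divergence hypothesis $\int_0^{+\infty}ds/\sqrt{G(s)}=+\infty$ forces $\varphi(t)\to+\infty$; moreover $\varphi'(t)=\big(t\,G(\sqrt t)\big)^{-1/2}$.

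Next, for each $\eta\in(0,1)$ I would consider $u_\eta=u-\eta\,\varphi(\gamma)$. Since $u$ is bounded above while $\varphi(\gamma)\to+\infty$ off compact sets by C1, the function $u_\eta$ tends to $-\infty$ at infinity and hence attains a global maximum at some $x_\eta\in M$. At $x_\eta$ one has $\nabla^M u(x_\eta)=\eta\,\varphi'(\gamma)\nabla^M\gamma$ and $\Delta^M u(x_\eta)\le \eta\big(\varphi'(\gamma)\Delta^M\gamma+\varphi''(\gamma)\vert\nabla^M\gamma\vert^2\big)$. Feeding in C2 gives $\vert\nabla^M u\vert(x_\eta)\le \eta B\,G(\sqrt{\gamma(x_\eta)})^{-1/2}$, while C3 gives for the leading Laplacian term the exact cancellation $\eta\,\varphi'(\gamma)\,C\sqrt{\gamma\,G(\sqrt\gamma)}=\eta C$. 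Comparing $u_\eta(x_\eta)$ with the value of $u_\eta$ at a fixed base point where $u$ is within $1/j$ of $u^*$, and using $\varphi\ge 0$, shows $u(x_\eta)\to u^*$ as $\eta\to 0$. Selecting $\eta=\eta_j\downarrow 0$ small enough at each stage then yields $x_j=x_{\eta_j}$ meeting the three required inequalities.

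The main obstacle is the second-order estimate, namely controlling the term $\eta\,\varphi''(\gamma)\vert\nabla^M\gamma\vert^2$. When $G$ is nondecreasing this term is nonpositive and only helps, but in general one must bound $t\,\vert\varphi''(t)\vert$ by a multiple of $\varphi'(t)$; a direct computation reduces this to a bound on $sG'(s)/G(s)$, which is exactly what the regularity hypothesis $\limsup_{t\to+\infty} tG(\sqrt t)/G(t)<+\infty$ provides. The gradient estimate relies likewise on $G$ being bounded below by $G(0)>0$ (which one may assume, e.g.\ by taking $G$ nondecreasing), so that $G(\sqrt\gamma)^{-1/2}$ stays bounded. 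Once these two structural facts about $G$ are secured, both derivative quantities at $x_\eta$ are $O(\eta)$ while $u(x_\eta)\to u^*$, and the construction closes.
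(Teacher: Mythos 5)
The paper itself offers no proof of this statement: it is quoted (with a transcription slip, noted below) from Pigola--Rigoli--Setti \cite{Pi-Ri-SE}, so the comparison must be with their original argument. Your penalization scheme --- maximize $u_\eta = u - \eta\,\varphi(\gamma)$ with $\varphi'(t)=\bigl(t\,G(\sqrt{t})\bigr)^{-1/2}$, chosen precisely so that C3 collapses to the constant bound $\eta C$ and C2 gives $\vert\nabla^M u\vert(x_\eta)\le \eta B\,G(\sqrt{\gamma(x_\eta)})^{-1/2}$ --- is a genuinely different and considerably shorter route than the contradiction argument of \cite{Pi-Ri-SE}. Its main ingredients are sound: C1 makes the sublevel sets of $\gamma$ compact, so $u_\eta$ attains a global maximum with no completeness assumption (correctly, since the theorem assumes none); the substitution $s=\tau^2$ does convert $\int_0^{+\infty} ds/\sqrt{G(s)}=+\infty$ into $\varphi(t)\to+\infty$; and comparison with a near-supremum base point gives $u(x_\eta)\to u^*$. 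Notably, on your route the fourth condition $\limsup_{t\to+\infty} tG(\sqrt{t})/G(t)<+\infty$ is never actually needed once $G$ is nondecreasing; this is a known simplification (essentially Borb\'ely's observation that this hypothesis is superfluous for the Omori--Yau conclusion), so losing it is a feature of the method, not an oversight. One piece of bookkeeping you leave implicit but which is easily repaired: C2 and C3 hold only off a compact set $K$, so when $x_\eta\in K$ or $\gamma(x_\eta)\le t_0$ you must instead invoke the uniform bounds of $\varphi'(\gamma)\vert\nabla\gamma\vert$ and of $\Delta^{M}(\varphi\circ\gamma)$ on the fixed compact set $K\cup\{\gamma\le t_0\}$, which are still $O(\eta)$ uniformly in $\eta$.

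The one genuinely incorrect assertion is in your closing paragraph: the hypothesis $\limsup_{t\to+\infty} tG(\sqrt{t})/G(t)<+\infty$ does \emph{not} bound $sG'(s)/G(s)$ --- it compares values of $G$ at the scales $\sqrt{t}$ and $t$ and is insensitive to local slope, so a nondecreasingness-free control of $\varphi''(\gamma)\vert\nabla\gamma\vert^2$ cannot be extracted from it; likewise ``one may assume $G$ nondecreasing'' is not a harmless reduction, since replacing $G$ by a nondecreasing majorant can destroy the divergence $\int_0^{+\infty} ds/\sqrt{G(s)}=+\infty$. What rescues your argument is that the statement as printed here miscopies \cite{Pi-Ri-SE}: the original hypothesis is $G'(t)\ge 0$ on all of $[0,+\infty)$, not merely $G'(0)\ge 0$. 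Under the actual hypothesis your two ``structural facts'' ($\varphi''\le 0$, since $\frac{d}{dt}\bigl[tG(\sqrt{t})\bigr]=G(\sqrt{t})+\tfrac{\sqrt{t}}{2}G'(\sqrt{t})\ge 0$, and $G\ge G(0)>0$) hold outright, the problematic fallback paragraph becomes unnecessary, and the proof closes as you describe.
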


Because of the Theorem of Pigola-Rigoli-Setti, Theorem \ref{Pi-Ri-Se} above, the Corollary \ref{convexo1.3} can be extended to a more general class of Riemannian manifold that satisfy the Omori-Yau Maximum Principle. Then, we have the

\begin{teo}
Let $M_1$ be without boundary and properly embedded hypersurface in $\mathbb{R}^{n+k}$ with bounded scalar curvature and whose mean curvature satisfies $b_0\leq H_{M_1}\leq b_1$, where $b_0,b_1>0$. Let also $M_2$ be without boundary, disjoint of $M_1$ and properly embedded $n$-dimensional submanifold of $\mathbb{R}^{n+k}$. Suppose there exist a non-negative function in $M_2$ satisfying the conditions $C1)$, $C2)$ and $C3)$ of Theorem \ref{Pi-Ri-Se} and 
$$\sup _{M_2}\vert \mathbf{H}_{M_2}\vert <\inf _{M_1}\Uplambda _n.$$
Then $M_2$ it cannot lie in the convex side of $M_1$.
\end{teo}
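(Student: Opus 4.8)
The plan is to reduce the statement to the two-case dichotomy already used in the proofs of Theorems \ref{convexo} and \ref{dist}, substituting the Omori--Yau machinery of Theorem \ref{Pi-Ri-Se} for the bounded-Ricci hypothesis. First I would argue by contradiction, assuming $M_2$ lies in the convex side of $M_1$, and split according to whether $\mathrm{dist}(M_1,M_2)=0$ or $\mathrm{dist}(M_1,M_2)>0$. In the second case the argument is identical to the corresponding paragraphs of Theorems \ref{convexo} and \ref{dist}: I would translate $M_2$ by the limiting displacement vector $v$ to obtain $\overline{M}_2=M_2+v$, observe that $M_1\cap\overline{M}_2\neq\emptyset$ (otherwise $\overline{M}_2$ would have an ideal contact at infinity with $M_1$, handled by the first case), pick $p_0\in M_1\cap\overline{M}_2$ where $d$ attains a minimum so that $\nabla^{\overline{M}_2}d(p_0)=0$, and invoke Lemma \ref{lema2.2} (whose hypothesis $\sup_{M_2}\vert\mathbf{H}_{M_2}\vert\leq\inf_{M_1}\Uplambda_n$ holds here) to force $\Delta^{\overline{M}_2}d(p_0)<0$, contradicting minimality.

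The substantive case is $\mathrm{dist}(M_1,M_2)=0$, where $M_2$ has an ideal contact at infinity with $M_1$. Here the key replacement is that, instead of assuming $\mathrm{Ric}_{M_2}$ bounded below to get the Omori--Yau sequence, I would apply Theorem \ref{Pi-Ri-Se} with the auxiliary function $\gamma$ supplied by hypothesis to the function $u=-d$ (equivalently, the version for the infimum of $d$). This yields a sequence $x_j\in M_2$ with $d(x_j)\to\inf_{M_2}d$, $\vert\nabla^{M_2}d\vert(x_j)<1/j$ and $\Delta^{M_2}d(x_j)>-1/j$, exactly the estimates (\ref{equacao12}) that drove the proof of Theorem \ref{dist}. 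From there I would feed these into Lemma \ref{lema2.1} to obtain the analogue of (\ref{equacao13})--(\ref{equacao15}), namely
\begin{equation}
\|II_{d_j}\|\,\frac{1}{j^2-1}+\frac{1}{j}>\sum_{i=1}^{n}\lambda_i(d_j)-n\vert\mathbf{H}_{M_2}\vert\geq\sum_{i=1}^{n}\lambda_i-n\vert\mathbf{H}_{M_2}\vert,
\end{equation}
using Lemma \ref{lema1.1} for the second inequality and the comparison of $n$-mean curvatures of the parallel hypersurface with that of $M_1$.

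To pass to the limit I would need $\lim_{j\to+\infty}\|II_{d_j}\|<+\infty$, which is exactly what Lemma \ref{shapeoperator} provides \emph{provided} $\limsup_{j}\|A_j\|<+\infty$. Since $M_1$ has bounded scalar curvature and mean curvature bounded between $b_0$ and $b_1$, Remark \ref{obs4.2} (via the Gauss equation in ambient flat space) gives $\sup_{M_1}\|A\|\leq n\sup_{M_1}H_{M_1}<+\infty$, so the $A_j$ are uniformly bounded and Lemma \ref{shapeoperator} applies. Letting $j\to+\infty$ then yields $n\vert\mathbf{H}_{M_2}\vert\geq\lambda_1+\cdots+\lambda_n$, i.e.\ $\sup_{M_2}\vert\mathbf{H}_{M_2}\vert\geq\inf_{M_1}\Uplambda_n$, contradicting the strict hypothesis $\sup_{M_2}\vert\mathbf{H}_{M_2}\vert<\inf_{M_1}\Uplambda_n$, and the proof is complete.

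I expect the main obstacle to be verifying that the $n$-convexity comparison and the codimension-$k$ bookkeeping go through unchanged: Lemmas \ref{lema2.1}, \ref{lema2.2} and \ref{shapeoperator} are all stated for $M_1\subset\mathbb{R}^{n+k}$ with $M_2$ an $n$-submanifold, so the geometric estimates transfer directly, but one must check that $\sup_{M_1}\|A\|<+\infty$ coming from Remark \ref{obs4.2} is legitimately available for a hypersurface of $\mathbb{R}^{n+k}$ (where ``hypersurface'' and the Gauss-equation normalization must be read consistently) and that the Omori--Yau sequence for $-d$ can indeed be extracted under conditions C1)--C3), since $d$ itself is only $C^2$ near $M_1$ and one is implicitly working on the region where the parallel-hypersurface construction is valid.
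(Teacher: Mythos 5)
Your proposal is correct and follows essentially the same route as the paper, which proves this theorem precisely by repeating the argument of Theorem \ref{dist} with the Omori--Yau sequence now supplied by Theorem \ref{Pi-Ri-Se} (applied via the hypothesized function $\gamma$) and the bound $\limsup_{j\to+\infty}\Vert A_j\Vert<+\infty$ obtained from Remark \ref{obs4.2} together with Lemma \ref{shapeoperator}, exactly as in Corollary \ref{convexo1.3}, while the case $dist(M_1,M_2)>0$ is handled by the same translation argument. The one slip you already half-flag is cosmetic: since the scalar curvature of $M_1$ is assumed bounded rather than nonnegative, the Gauss equation gives $\Vert A\Vert ^2\leq m^2b_1^2+m(m-1)\sup_{M_1}\vert R_{M_1}\vert$ with $m=n+k-1$ (the dimension of the hypersurface $M_1\subset\mathbb{R}^{n+k}$), rather than $\Vert A\Vert\leq n\sup_{M_1}H_{M_1}$, which still yields the finiteness needed for Lemma \ref{shapeoperator}.
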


With analogous proof of Theorem \ref{dist} and extend in the case of {\it Ideal Contact at Infinity} Theorem 4 in \cite{U-S}. Extend also to submanifolds of arbitrary codimension $M_2$ the Theorem 3.4 in \cite{R-M}, Theorem 1 in \cite{ros-rosenberg} and Corollary 1 in \cite{RFL}.



\end{document}